\newcommand{\version}{Version of \today}
\theoremstyle{plain}
\newtheorem{theorem}{Theorem}
\newtheorem{corollary}[theorem]{Corollary}
\newtheorem{lemma}[theorem]{Lemma}
\newtheorem{proposition}[theorem]{Proposition}
\theoremstyle{definition}
\newtheorem*{remark*}{Remark}
\tikzset{%
  >=latex, 
  inner sep=0pt,%
  outer sep=2pt,%
  mark coordinate/.style={inner sep=0pt,outer sep=0pt,minimum size=3pt,
    fill=black,circle}%
}
\renewcommand {\epsilon}{\varepsilon}
\renewcommand {\le}{\leqslant}
\renewcommand {\ge}{\geqslant}
\renewcommand {\leq}{\leqslant}
\renewcommand {\geq}{\geqslant}
\newcommand{\pr}{\mathbf P}
\newcommand{\e}{\mathbf E}
\DeclareMathOperator{\dist}{dist}
\DeclareMathOperator{\bm}{bm}
\DeclareMathOperator{\me}{me}
\newcommand{\R}{\mathbb R}
\newcommand{\ra}{\rightarrow}
\begin{document}
\title[Martin boundary of random walks in convex cones]
{Martin boundary of random walks in convex cones}
\thanks{This project has received funding from the European Research Council (ERC) under the European Union's Horizon 2020 research and innovation programme under the Grant Agreement No 759702.}
\thanks{\version}

\author[J. Duraj]{Jetlir Duraj}
\address{Department of Economics, Harvard University, USA}
\email{duraj@g.harvard.edu}

\author[K. Raschel]{Kilian Raschel} 
\address{CNRS, Institut Denis Poisson, Universit\'e de Tours and Universit\'e d'Orl\'eans, France}
\email{raschel@math.cnrs.fr}
       
\author[P. Tarrago]{Pierre Tarrago} 
\address{Laboratoire de Probabilit\'es, Statistique et Mod\'elisation, Sorbonne Universit\'e, France}
\email{pierre.tarrago@cimat.mx}

\author[V. Wachtel]{Vitali Wachtel} 
\address{Institut f\"ur Mathematik, Universit\"at Augsburg, Germany}
\email{vitali.wachtel@mathematik.uni-augsburg.de}

\begin{abstract}
We determine the asymptotic behavior of the Green function for zero-drift random walks confined to multidimensional convex cones. As a consequence, we prove that there is a unique positive discrete harmonic function for these processes (up to a multiplicative constant); in other words, the Martin boundary reduces to a singleton.
\end{abstract}

\keywords{Random walk; cone; exit time; Green function; harmonic function; Martin boundary; Brownian motion; coupling}
\subjclass{Primary 60G50; secondary 60G40, 60F17} 

\maketitle
\setcounter{tocdepth}{1}
\tableofcontents

\section{Introduction and main results}
\label{sec:intro}

The primary motivation of the present paper is to solve the following uniqueness problem for discrete harmonic functions: take a lattice $\Lambda$ (a linear transform of ${\bf Z}^d$), a convex cone $K$ in ${\bf R}^d$ and a discrete Laplacian operator
\begin{equation*}
     L(f)(x)=\sum_{x\sim y} p_{y-x} (f(y)-f(x)),
\end{equation*}
where the weights $\{p_{z}\}_{z\in \Lambda}$ sum to $1$, have zero drift (meaning that $\sum_{z\in\Lambda}zp_{z}=0$) and satisfy some minimal moment assumptions (we will be more specific later). We prove that up to multiplicative constants, there is a unique function $f:\Lambda \to {\bf R}$ which is positive, harmonic in $\Lambda\cap K$, i.e., $L(f)=0$, and equal to zero outside $K$. In terms of potential theory for random walks, we show that the Martin boundary of killed, zero-mean random walks in cones is reduced to one point. Our solution to this uniqueness problem is fully based on Martin boundary theory and requires the thorough asymptotic computation of the Green function for killed random walks in multidimensional cones. These asymptotics represent actually the main contribution of the paper.

\subsubsection*{Green functions and Martin boundary of random walks in cones}
Random walks conditioned to stay in multidimensional cones are a very popular topic in probability. Indeed, they appear naturally in various situations: nonintersecting paths \cite{St-90,EiKo-08,DeWa-10}, which can be seen as random walks in Weyl chambers, random walks in the quarter plane \cite{FaIaMa-17,Ra-11}, queueing theory \cite{CoBo-83}, branching processes and random walks in random environment \cite{AfGeKeVa-05}, finance \cite{CodL-13}, modelling of some populations in biology \cite{BiTr-12}, etc. As these random walk models are in bijection with many other discrete models (maps, permutations, trees, Young tableaux, partitions), they are also intensively studied in combinatorics \cite{BMMi-10,BoBMKaMe-16,DrHaRoSi-18}.

Let us now briefly review the literature regarding asymptotics of Green functions and Martin boundary for killed random walks in cones (see \cite{Sa-97} for a general introduction to Martin boundary theory). In the one-dimensional case, Doney \cite{Do-98} describes the harmonic functions and the Martin boundary of a random walk $\{S(n)\}$ on ${\bf Z}$ killed on the negative half-line (obviously there is essentially a unique cone in dimension $1$, namely ${\bf N}=\{0,1,2,\ldots \}$). Alili and Doney \cite{AlDo-01} extend this result to the corresponding space-time random walk $\{(S(n),n)\}$. 

In the higher dimensional case, let us start by quoting the famous Ney and Spitzer result \cite{NeSp-66} on the Green function asymptotics of drifted, unconstrained random walks in ${\bf Z}^d$. As a consequence, the Martin boundary is shown to be homeomorphic to the unit sphere ${\bf S}^{d-1}$. By large deviation techniques and Harnack inequalities, Ignatiouk-Robert \cite{IR-08,IR-09}, then Ignatiouk-Robert and Loree \cite{IRLo-10}, find the Martin boundary of random walks in half-spaces ${\bf N}\times {\bf Z}^{d-1}$ and orthants ${\bf N}^d$, with non-zero drift and killing at the boundary; they also derive the asymptotics of ratios of Green functions. For small step walks in the quarter plane, Lecouvey and Raschel \cite{LeRa-16} show that generating functions of harmonic functions are strongly related to certain conformal mappings.

The results on Green functions and Martin boundaries are rarer for driftless random walks, and typically require a strong underlying structure: the random walks are Cartesian products in \cite{PiWo-92}; they are associated with Lie algebras in \cite{Bi-91,Bi-92}; certain reflection groups are supposed to be finite in \cite{BiBoOC-05}. Varopoulos \cite{Va-99,Va-09} derives upper and lower bounds for the tail of the survival probability in cones under the assumption that the increments of the random walk are bounded. He also proves various statements on the growth or harmonic functions. Raschel \cite{Ra-11,Ra-14} obtains the asymptotics of the Green function and the Martin boundary in the case of small step quadrant random walks related to finite reflection groups. Bouaziz, Mustapha and Sifi \cite{BoMuSi-15} prove the existence and uniqueness of the positive harmonic function for random walks satisfying finite range, centering and ellipticity conditions, killed at the boundary of the orthant ${\bf N}^d$. Mustapha and Sifi \cite{MuSi-19} extend these results to Lipshitz domains, under similar hypotheses.
Ignatiouk-Robert \cite{IR-20} shows the uniqueness of the harmonic function in a convex cone, under the assumption that the first exit time has infinite expectation. Finally, in the paper \cite{RaTa-18}, the second and third authors derive a local limit theorem for zero-drift random walks confined to multidimensional convex cones, when the endpoint is close to the boundary. 

As we will see below, our theorems unify and extend all these results in the context of convex cones, under optimal moment assumptions.

\subsubsection*{Exit time, Green functions, harmonic functions and reverse random walk}
Consider a random walk $\{S(n)\}_{n\geq1}$ on ${\bf R}^d$, $d\geq1$, where
\begin{equation*}
     S(n) = X(1)+\cdots +X(n)
\end{equation*}
and $\{X(n)\}_{n\geq1}$ is a family of independent and identically distributed (i.i.d) copies of a random variable $X=(X_1,\ldots,X_d)$. The support of the increments is supposed to generate a lattice, which we denote by $\Lambda$.

Given a cone $K$, let $\tau_x$ be the first exit time from the cone $K$ of the random walk with starting point $x\in K$, i.e.,
\begin{equation}
\label{eq:exit_time_def}
     \tau_x=\inf\{n\geq 1 : x+S(n)\notin K\}.
\end{equation} 
By definition, the Green function of $S(n)$ killed at $\tau_x$ is 
\begin{equation}
\label{eq:Green_function_def}
     G_K(x,y) = \sum_{n=0}^\infty \pr(x+S(n)=y,\tau_x>n).
\end{equation}
A  function $h:{K}\to {\bf R}$ is said to be (discrete) harmonic with respect to ${K}$ and $\{S(n)\}$ if for every $x\in{K}$ and $n\geq 1$,
\begin{equation*}
     h(x)=\e[h(x+S(n)),\tau_{x}>n].
\end{equation*}
Remark that the above identity for $n=1$ implies all the other relations for $n\geq 2$. In the sequel, a harmonic function with respect to ${K}$ and $\{S(n)\}$ will be simply called a harmonic function.

Denisov and Wachtel proved \cite{DeWa-15,DeWa-19} the existence of a positive harmonic function $V:{K}\to {\bf R}_+$ defined by 
\begin{equation}\label{definition_V(x)}
     V(x)=\lim\limits_{n\rightarrow \infty}\e[u(x+S(n)),\tau_{x}>n].
\end{equation}
This harmonic function is of central importance in the present paper, since it will ultimately be identified with the Martin boundary of the random walk in ${K}$.

We denote by $\{S'(n)\}_{n\geq 1}$ the reverse random walk, which is the sum of the increments $\{X'(n)\}_{n\geq 1}$, i.i.d, independent from $\{X(n)\}_{n\geq 1}$ and such that $X'(n)$ is distributed as $-X$. In the sequel, every quantity involving $S'$ will be denoted similarly as the same quantity involving $S$, with a prime added at the right.

\subsubsection*{Notations and assumptions on cones and random walks}
Our hypotheses are of three types: some of them only concern the random walk (see \ref{H:drift_zero}, \ref{H:cov_id} and \ref{H:RW_aperiodic}), the assumption \ref{H:regularity_cone} is a convexity restriction on the cone, while the last ones, namely, \ref{H:strongly_irreducible}, \ref{H:moments} and \ref{local_assump} (moment assumptions) concern the behavior of the random walk in the cone.
\begin{enumerate}[label=($H\arabic{*}$),ref=($H\arabic{*}$)]
     \item\label{H:drift_zero}$\e[X_{i}]=0$ (zero drift assumption),
     \item\label{H:cov_id}$\text{cov}(X_{i},X_{j})=\delta_{i,j}$ (identity covariance matrix assumption),
     \item\label{H:RW_aperiodic}the random walk is strongly aperiodic, i.e., if $A=\lbrace x\in \Lambda: \mathbf{P}(X=x)>0\rbrace$, then $z+A$ generates $\Lambda$ for all $z\in\Lambda$.
\end{enumerate}     
Notice that \ref{H:cov_id} is not a restriction: we may always perform a linear transform so as to decorrelate the random walk (obviously this linear transform impacts on the cone in which the walk is defined). 

Denote by ${\bf S}^{d-1}$ the unit sphere of ${\bf R}^d$ and by $\Sigma$ an open, connected subset of ${\bf S}^{d-1}$. Let $K$ be the cone generated by the rays emanating from the origin and passing through $\Sigma$, i.e., $\Sigma=K\cap {\bf S}^{d-1}$; see Figure \ref{fig:cones} for two examples. In this paper, we shall suppose that
\begin{enumerate}[label=($H\arabic{*}$),ref=($H\arabic{*}$)]
\setcounter{enumi}{3}
     \item\label{H:regularity_cone}the cone $K$ is convex.
     \end{enumerate}

We further require a form of irreducibility of the random walk, which is an adaptation to unbounded random walks of the concept of reachability condition from infinity introduced in \cite{BoBMMe-18}. 
\begin{enumerate}[label=($H\arabic{*}$),ref=($H\arabic{*}$)]
\setcounter{enumi}{4}
     \item\label{H:strongly_irreducible} The random walk $S$ is asymptotically strongly irreducible, meaning that there exists a constant $R>0$ such that for any $z\in K\cap \Lambda$ with $\vert z\vert\geq R$, there exists a path with positive probability in $K\cap B(z,R)$ which starts in $z+K$ and ends at $z$.
\end{enumerate}
 There are several simple situations where the latter condition is satisfied, in particular when $\pr(X\in -K)>0$. If $K$ is $\mathcal{C}^{2}$, the condition \ref{H:strongly_irreducible} is superfluous.

When $K$ is convex, on each point $q$ of $\partial \Sigma$ there exists a non-trivial closed ball $B$ in ${\bf S}^{d-1}$ such that $B\cap \Sigma=q$. Hence, by standard analytic results \cite[Thm~6.13]{GiTh}, $\Sigma$ is regular for the Dirichlet problem. In particular (see for example the introduction of \cite{BaSm-97}), there exists a function $u$ harmonic on ${K}$, i.e., $\Delta u=0$, such that $u$ is positive in $K$ and $u_{\partial {K}}=0$, $\partial {K}$ denoting the boundary of $K$. This function is unique up to scalar multiplication, see \cite[Cor.~6.10 and Rem.~6.11]{GySal-11}, and is called the r\'eduite of ${K}$. It is homogeneous (or radial) in the sense that $u(tx)=t^{p}u(x)$ for all $t>0$ and $x\in K$. The homogeneity exponent $p$ is called the exponent of the cone ${K}$.

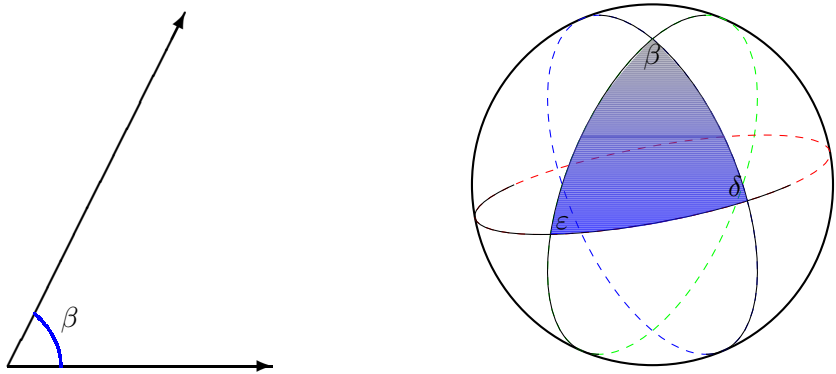
\begin{figure}[ht]
\begin{picture}(0,0)
    \thicklines
    \put(0,0){\textcolor{black}{\vector(1,0){100}}}
    \put(0,0){\textcolor{black}{\vector(1,2){67}}}
    \thicklines
    \put(20,15){$\beta$}
    \textcolor{blue}{\qbezier(20,0)(20,11)(10,20)}
    \end{picture} \qquad\qquad\qquad\qquad\qquad\qquad\qquad
\tdplotsetmaincoords{90}{90}
\begin{tikzpicture}[scale=3,tdplot_main_coords]

\tdplotsetthetaplanecoords{90}
\tdplotdrawarc[tdplot_rotated_coords,thick]{(0,0,0)}{0.8}{0}{360}{}{}
\tdplotsetrotatedcoords{60}{70}{0}
\tdplotdrawarc[dashed,tdplot_rotated_coords,name path=blue,color=blue]{(0,0,0)}{0.8}{0}{360}{}{}
\tdplotdrawarc[tdplot_rotated_coords]{(0,0,0)}{0.8}{0}{180}{}{}
\tdplotsetrotatedcoords{120}{110}{0}
\tdplotdrawarc[dashed,tdplot_rotated_coords,name path=green,color=green]{(0,0,0)}{0.8}{0}{360}{}{}
\tdplotdrawarc[tdplot_rotated_coords]{(0,0,0)}{0.8}{0}{180}{}{}
\tdplotsetrotatedcoords{220}{16}{0}
\tdplotdrawarc[dashed,tdplot_rotated_coords,name path=red,color=red]{(0,0,0)}{0.8}{0}{360}{}{}
\tdplotdrawarc[tdplot_rotated_coords]{(0,0,0)}{0.8}{-90}{90}{}{}


\path [name intersections={of={green and blue}, total=\n}]  
\foreach \i in {1,...,\n}{(intersection-\i) circle [radius=0.5pt] coordinate(gb\i){}};

\path [name intersections={of={green and red}, total=\n}]  
\foreach \i in {1,...,\n} {(intersection-\i) circle [radius=0.5pt]coordinate(gr\i){}};

\path[name intersections={of={red and blue}, total=\n}]  
\foreach \i in {1,...,\n}{(intersection-\i) circle [radius=0.5pt]coordinate(rb\i){}};

\shade[top color=gray,bottom color=blue,opacity=0.5]  
(rb3) to[bend left=7] (gr1) to[bend left=17] (gb2) to[bend left=15] cycle;

\draw (gb2) node[below]{$\beta$};
\draw (rb3) node[above left]{$\delta$};
\draw (gr1) node[above right]{$\varepsilon$};
\end{tikzpicture}
\caption{In dimension $2$, $\Sigma$ is an arc of circle and the cone $K$ is a wedge of opening $\beta$. In dimension $3$, any section $\Sigma\subset{\bf S}^2$ defines a cone. The picture on the right gives the example of a spherical triangle on the sphere ${\bf S}^2$, corresponding to the orthant $K={\bf N}^3$ (after possible decorrelation of the coordinates, see \ref{H:cov_id}).}
\label{fig:cones}
\end{figure}

Our next assumption \ref{H:moments} involves the quantity 
\begin{equation*}
     q=\sup_{\sigma\in\partial\Sigma}q_{\sigma}\geq 1,
\end{equation*}
that we now define. For each point $\sigma\in\partial\Sigma$, we define 
\begin{equation}
\label{eq:def_K_sigma}
     K_{\sigma}:=\lbrace u\in{\bf R}^d: \exists t>0, \sigma+tu\in K\rbrace.
\end{equation}
By convexity of $K$, the set $K_{\sigma}$ is a convex cone, which represents the cone tangent to $K$ at $\sigma$.  
Let $q_{\sigma}$ denote the exponent of $K_{\sigma}$. Note that we always have $1\leq K_{\sigma}\leq p$, since $K\subset K_{\sigma}$ and $K_{\sigma}$ is included in a half-space. When $K$ is $\mathcal C^{2}$ at $\sigma$, $K_{\sigma}$ is precisely a half-space, which yields $q_{\sigma}=1$.

We shall also assume a moment condition on the increments, which depends on the asymptotic shape of the cone $K$:
\begin{enumerate}[label=($M\arabic{*}$),ref=($M\arabic{*}$)]
\setcounter{enumi}{0}
     \item\label{H:moments}$\e[\vert X\vert^{r(p)}]<\infty$ for some $r(p)>p+q+d-2+(2-p)^{+}$ and $\e[\vert X\vert^{2+\delta}]<\infty$ for some $\delta>0$. If the boundary of $K$ is $\mathcal C^{2}$ (which implies $q=1$), the strict inequality for $r(p)$ may be replaced by a weak inequality.   
\end{enumerate} 
In the case where the cone is $\mathcal C^{2}$ or when considering asymptotic results inside the cone, the latter moment condition can be replaced by the following assumption of the local structure of the distribution of the increments:
\begin{enumerate}[label=($M\arabic{*}$),ref=($M\arabic{*}$)]
\setcounter{enumi}{1}
\item\label{local_assump}
$\mathbf{P}(X=x)\le \vert x\vert^{-p-d+1}f(\vert x\vert)$
for some function $f$ which is decreasing and such that $u^{(3-p)\vee 1}f(u)\to 0$ as
$u\to\infty$.
\end{enumerate} 

In this paper, we do not require the existence of a bigger cone $K'$ with $\partial K\setminus\lbrace 0\rbrace\subset \text{int}(K')$, such that the r\'eduite $u$ can be extended to a harmonic function on $K'$. This necessary condition in \cite{DeWa-15} is removed in \cite{DeWa-19} under the moment assumption \ref{H:moments}.

\subsubsection*{Main results}

Our first main result is the asymptotics of the Green function \eqref{eq:Green_function_def} in the regime where the endpoint tends to infinity while staying far from the boundary.
\begin{theorem}
\label{thm:deep}
Set $r_1(p)=p+d-2+(2-p)^+$ and assume that either $\mathbf{E}\vert X\vert^{r_1(p)}$ is finite or \ref{local_assump} holds.
\begin{enumerate}[label=($\alph{*}$),ref=($\alph{*}$)]
     \item\label{itA}If there exists $\alpha>0$ such that $\vert y\vert\to\infty$ with $\dist(y,\partial K)\ge \alpha\vert y\vert$, then
\begin{equation}
\label{T1}
G_K(x,y)
\sim cV(x)\frac{u(y)}{\vert y\vert^{2p+d-2}}.
\end{equation}
     \item\label{itB}If $\mathbf{E}\vert X\vert^{r}$ is finite for some $r>r_1(p)$, there exists $\rho>0$ such that \eqref{T1} holds uniformly for $\vert y\vert\to\infty$ with $\dist(y,\partial K)\ge \vert y\vert ^{1-\rho}$.
\end{enumerate}
\end{theorem}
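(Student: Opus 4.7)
The plan is to estimate the series in \eqref{eq:Green_function_def} by splitting at the scale $n\asymp|y|^2$, applying a local limit theorem on the main range, and showing the two tails are negligible. The target asymptotic $V(x)u(y)/|y|^{2p+d-2}$ is the discrete analogue of the Brownian result obtained by time-integrating the heat kernel of $K$, and the constant $c$ will emerge from a Gamma integral.

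The central input is the local limit theorem of Denisov--Wachtel \cite{DeWa-15,DeWa-19} (in the refined form of \cite{RaTa-18}): under \ref{H:moments} at level $r_1(p)$ or under \ref{local_assump}, uniformly for $y/\sqrt n$ in a compact subset of the interior of $K$,
\begin{equation*}
\pr(x+S(n)=y,\tau_x>n)=(1+o(1))\,\varkappa\,\frac{V(x)u(y)}{n^{p+d/2}}e^{-|y|^2/(2n)},
\end{equation*}
with $\varkappa$ cone-dependent. Fix $0<\varepsilon<A<\infty$ and write $G_K(x,y)=\Sigma_1+\Sigma_2+\Sigma_3$ for the sums over $n<\varepsilon|y|^2$, $\varepsilon|y|^2\le n\le A|y|^2$, and $n>A|y|^2$. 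In the middle range, the assumption $\dist(y,\partial K)\ge\alpha|y|$ keeps $y/\sqrt n$ in a fixed compact subset of the interior of $K$; the change of variables $n=s|y|^2$ turns $\Sigma_2$ into a Riemann approximation of
\begin{equation*}
\varkappa\,\frac{V(x)u(y)}{|y|^{2p+d-2}}\int_\varepsilon^A s^{-p-d/2}e^{-1/(2s)}\,ds,
\end{equation*}
and letting $\varepsilon\downarrow 0$, $A\uparrow\infty$ yields the theorem with $c=\varkappa\cdot 2^{p+d/2-1}\Gamma(p+d/2-1)$.

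Both tails must contribute $o(V(x)u(y)/|y|^{2p+d-2})$. For $\Sigma_3$, a uniform LCLT-type upper bound $\pr(x+S(n)=y,\tau_x>n)\le CV(x)u(y)n^{-p-d/2}$ (valid also for small $|y|/\sqrt n$) yields $\Sigma_3=O(A^{1-p-d/2})\,V(x)u(y)|y|^{2-2p-d}$, which is negligible relative to the target as $A\uparrow\infty$ because $p+d/2>1$. For $\Sigma_1$ the Gaussian factor in the LCLT is unavailable since $y/\sqrt n$ exits every compact subset of the interior, and the contribution is controlled instead by Fuk--Nagaev-type large-deviation inequalities together with the uniform density bound $\pr(S(n)=y)\le Cn^{-d/2}$. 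The threshold $r_1(p)=p+d-2+(2-p)^+$ is precisely calibrated so that these estimates give $\Sigma_1=o(V(x)u(y)/|y|^{2p+d-2})$ as $\varepsilon\downarrow 0$; the additive $(2-p)^+$ term accounts for small cone exponents $p<2$, where the tail of $\tau_x$ alone is not integrable in $n$.

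The main technical obstacle is part \ref{itB}: under the weaker condition $\dist(y,\partial K)\ge|y|^{1-\rho}$, the point $y/\sqrt n$ may approach $\partial K$ at a polynomial rate, so the LCLT must be uniform on a shrinking tubular neighbourhood of the boundary rather than on a fixed compact subset of the interior. The strict hypothesis $r>r_1(p)$ provides an explicit polynomial error rate in the LCLT, whose dependence on $\dist(y,\partial K)$ must be tracked through the Brownian-meander coupling of \cite{DeWa-15,DeWa-19}; choosing $\rho>0$ small in terms of the gap $r-r_1(p)$ keeps the LCLT error $o(1)$ on this enlarged region. Once such a quantified, boundary-aware LCLT is established, the Riemann-sum analysis and the tail bounds carry through with essentially no change.
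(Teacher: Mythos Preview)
Your overall architecture matches the paper's: split at $n\asymp\varepsilon|y|^2$, convert the main range into a Riemann sum via the local limit theorem, and show the small-$n$ part is negligible. (The paper uses a two-term split $S_1+S_2$ with $S_2=\sum_{n\ge\varepsilon|y|^2}$; your separate $\Sigma_3$ is absorbed there, which is cosmetic.) Your description of part \ref{itB} is also in the right spirit: the extra moments beyond $r_1(p)$ are spent on a KMT-type coupling that yields a Gaussian upper bound on the local probability (Proposition~\ref{bound_local_far_away}), uniformly for $\dist(y,\partial K)\ge|y|^{1-\rho}$, and Fuk--Nagaev (Proposition~\ref{Green_very_large}) disposes of the very-small-$n$ range.

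The genuine gap is your $\Sigma_1$ estimate for part \ref{itA}. You claim that Fuk--Nagaev plus the density bound $\pr(S(n)=y)\le Cn^{-d/2}$ is ``precisely calibrated'' at the threshold $r_1(p)$. It is not. Running that argument with exactly $r_1(p)=p+d-2+(2-p)^+$ moments (this is Proposition~\ref{Green_very_large} of the paper) yields $\Sigma_1=o(|y|^{-a})$ for every $a<p+d-2$, but \emph{not} $o(|y|^{-(p+d-2)})$, which is the target since $u(y)\asymp|y|^p$ in the interior regime. The truncation parameter in Fuk--Nagaev always costs an $\epsilon$ in the exponent; this is exactly why part \ref{itB} requires \emph{strictly} more than $r_1(p)$ moments. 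Under the minimal hypothesis of part \ref{itA}, the paper avoids term-by-term large-deviation bounds altogether. It introduces the first-entrance time $\Theta_y$ into a ball $B(y,\delta|y|)$, dominates $S_1$ by $\mathbf{E}\bigl[G^{(\varepsilon|y|^2)}(y-x-S(\Theta_y));\tau_x>\Theta_y,\Theta_y\le\varepsilon|y|^2\bigr]$ with $G^{(t)}$ the \emph{unrestricted} truncated Green function, splits off the event of an anomalously large jump at time $\Theta_y$ (this is where the $r_1(p)$-th moment is spent), and controls the remainder via the Doob $V$-transform together with the functional limit theorem for the conditioned walk from \cite{DuWa-20}, which gives $\mathbf{P}^{(V)}(\Theta_y\le\varepsilon|y|^2)\to0$ as $\varepsilon\to0$. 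That tightness statement for the $V$-conditioned walk on scale $|y|$ is the missing idea; it replaces the moderate-deviation estimate you do not have at the critical moment level, and it is also what allows the local assumption \ref{local_assump} to substitute for the moment condition.
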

We will construct an example showing that the moment assumptions of Theorem \ref{thm:deep} are optimal (see Section \ref{sec:optimal}). We now turn to the Green function asymptotics along the boundary. In the case when the cone is a half-space, we obtain the following:

\begin{theorem}
\label{thm:half-space}
Assume that $\mathbf{E}\vert X\vert^{d+1}<\infty$. Assume also that $x=(0,\ldots,0,x_d)$ with $x_d=o(\vert y\vert)$.
Then 
\begin{equation*}
G_K(x,y)\sim c\frac{V(x)V'(y)}{\vert y\vert^d}.
\end{equation*}
Here, $V'$ is the harmonic function for the killed reversed random walk $\{-S(n)\}$.
\end{theorem}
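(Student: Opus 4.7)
The plan is to write
\begin{equation*}
G_K(x,y)=\sum_{n\ge 0}p_n(x,y),\qquad p_n(x,y)=\mathbf{P}(x+S(n)=y,\tau_x>n),
\end{equation*}
extract the leading term of each $p_n$ from a local limit theorem for the killed walk in the half-space, and approximate the resulting sum by a Riemann integral. Since $K$ is a half-space, the Brownian exponent equals $p=1$ and the réduite is $u(y)=y_d$, so the exponent $2p+d-2$ of Theorem~\ref{thm:deep} equals $d$, matching the target formula; Theorem~\ref{thm:half-space} should therefore be viewed as an upgrade of Theorem~\ref{thm:deep} that replaces $u(y)$ by $V'(y)$ so as to remain valid up to the boundary.

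The first step is to invoke a local limit theorem of the type developed in \cite{DeWa-15,DeWa-19,RaTa-18}: under $\mathbf{E}\vert X\vert^{d+1}<\infty$, uniformly for $y\in K$ (including $y$ close to $\partial K$) and for $n$ in the relevant time range,
\begin{equation*}
p_n(x,y)\sim \frac{V(x)\,V'(y)}{n^{d/2+1}}\,\varphi\!\left(\frac{y}{\sqrt n}\right),
\end{equation*}
where $\varphi$ is a smooth positive function built from the killed Brownian kernel on the half-space. The product $V(x)V'(y)$ is the discrete counterpart of the factor $x_d y_d$ that arises from the expansion $e^{-(x_d-y_d)^2/(2t)}-e^{-(x_d+y_d)^2/(2t)}\sim 2x_d y_d/t$ in the Brownian computation; it is the use of $V'(y)$ rather than $y_d$ that allows the LCLT to remain valid when $y$ approaches $\partial K$.

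Substituting into the Green function and analyzing the three regimes, the main contribution comes from $n\asymp \vert y\vert^2$. For $n\le \varepsilon\vert y\vert^2$, Gaussian/large-deviation bounds give an exponentially small contribution; for $n\ge M\vert y\vert^2$, the extra decay of $\varphi(y/\sqrt n)$ (the rescaled Brownian density tends to zero as its argument shrinks) together with the $n^{-d/2-1}$ prefactor yields a summable tail, which is where the moment hypothesis $\mathbf{E}\vert X\vert^{d+1}<\infty$ is used. On the main range, the substitution $n=\vert y\vert^2 s$ converts the sum into a Riemann approximation of
\begin{equation*}
\frac{V(x)\,V'(y)}{\vert y\vert^d}\int_\varepsilon^M s^{-d/2-1}\varphi\!\left(\frac{e_y}{\sqrt s}\right) ds, \qquad e_y=\frac{y}{\vert y\vert}.
\end{equation*}
Letting $\varepsilon\to 0$ and $M\to\infty$ produces $G_K(x,y)\sim c\,V(x)\,V'(y)/\vert y\vert^d$; the constant $c$ is independent of the direction $e_y$ because the radial form $\varphi(z)\propto e^{-\vert z\vert^2/2}$ of the half-space kernel depends only on $\vert z\vert$.

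The principal obstacle is establishing the LCLT uniformly up to the boundary. Deep in the cone, $V'(y)\sim C y_d$ and the statement reduces to what is essentially the "deep" regime already handled by Theorem~\ref{thm:deep}; the extension to $y$ arbitrarily close to $\partial K$ requires the refined control developed in \cite{RaTa-18} and is responsible for the moment condition $\mathbf{E}\vert X\vert^{d+1}<\infty$. The hypothesis $x_d=o(\vert y\vert)$ is what linearizes the reflection difference in $x_d y_d$ and makes the asymptotic factorize into $V(x)\cdot V'(y)/\vert y\vert^d$; were $x_d$ comparable to $\vert y\vert$, higher-order corrections in $x_d/\vert y\vert$ would destroy the product form. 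A secondary nuisance is that the exit time from the half-space has infinite mean in the one-dimensional projection, so the tail $n\gg \vert y\vert^2$ must be controlled via the decay $\varphi(y/\sqrt n)\to 0$ rather than by an $\mathbf{E}\tau_x<\infty$ argument.
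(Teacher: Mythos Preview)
Your overall architecture --- split $G_K(x,y)$ according to $n\le\varepsilon\vert y\vert^2$, $\varepsilon\vert y\vert^2\le n\le M\vert y\vert^2$, and $n\ge M\vert y\vert^2$, apply a local limit theorem of the form
\[
p_n(x,y)\sim c\,\frac{V(x)V'(y)}{n^{1+d/2}}\,e^{-\vert y\vert^2/2n}
\]
on the middle range, and pass to a Riemann integral --- matches the paper's approach for $S_2(x,y,\varepsilon)$, and the paper indeed establishes exactly this LCLT as its Lemma~\ref{lem:half-space-2}. The large-$n$ tail is harmless for the reason you give: the prefactor $n^{-1-d/2}$ is already summable. (Your remark that the moment hypothesis $\mathbf{E}\vert X\vert^{d+1}<\infty$ is spent on the tail $n\ge M\vert y\vert^2$ is misplaced; that tail needs no moments beyond those required for the LCLT.)

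The genuine gap is your treatment of $S_1(x,y,\varepsilon)=\sum_{n<\varepsilon\vert y\vert^2}p_n(x,y)$. You write that ``Gaussian/large-deviation bounds give an exponentially small contribution,'' but the LCLT you invoke is an $n\to\infty$ asymptotic and says nothing about $n$ small relative to $\vert y\vert^2$; and no off-the-shelf large-deviation estimate gives the required bound $S_1=o(V'(y)\vert y\vert^{-d})$ under only $d+1$ moments. In the paper this is by far the hardest part of the proof. The argument proceeds by first rotating so that $y_1\sim\vert y\vert$, then splitting according to whether some increment has $\vert X_1(k)\vert>\gamma y_1$. The big-jump part is controlled directly by the moment assumption (this is where $\mathbf{E}\vert X\vert^{d+1}<\infty$ actually enters). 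For the no-big-jump part one performs an exponential tilt $\overline{\mathbf{P}}(X\in dz)\propto e^{hz_1}\mathbf{P}(X\in dz;\vert X_1\vert\le\gamma y_1)$ with $h\asymp y_1^{-1}\log(1+y_1^2/n)$, obtains a Fuk--Nagaev-type factor $(n/\vert y\vert^2)^{1/\gamma}$, and then must still show that $\overline{\mathbf{P}}(\tau_x>n)\le C(1+x_d)n^{-1/2}$ uniformly in $n\in[\vert y\vert^{2-r},\vert y\vert^2]$ under the tilted measure --- which requires delicate control of the first three moments of $X_d$ under $\overline{\mathbf{P}}$ together with Esseen's concentration inequality. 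None of this is captured by ``Gaussian/large-deviation bounds,'' and without it the proof does not close.
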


Theorem \ref{thm:half-space} appears to be not only an extension of Uchiyama's results \cite{Uchiyama14}, but will be one of the crucial tools to derive the boundary asymptotics of the Green function in the general convex case (Theorem \ref{thm:asymp_Green_boundary} below).

When $K$ is not a half-space but a general convex cone, we first introduce
\begin{equation}
\label{eq:K_rho}
     {K}_{\rho}:=\lbrace y\in{K}:\dist(y,\partial {K})\geq R\vert y\vert^{1-\rho}\rbrace
\end{equation}
as well as the stopping time
\begin{equation}
\label{eq:stopping_time_theta}
     \theta_{y}=\inf\{n\geq 1: y+S'(n)\in {K}_{\rho}\},
\end{equation}
for $y\in{K}$. We denote by $y_{\rho}$ the random element $y+S'(\theta_{y})$.

\begin{theorem}
\label{thm:asymp_Green_boundary}
Suppose that $\vert y\vert$ goes to infinity with $y/\vert y\vert$ converging to $\sigma\in \partial \Sigma$. Assume \ref{H:drift_zero}--\ref{H:strongly_irreducible} and  $\mathbf{E}\vert X\vert^{r(p)}<\infty$ for some $r(p)>p+q_{\sigma}+d-2+(2-p)^{+}$, then
\begin{equation*}
     G_K(x,y)\sim \frac{V(x) \e[u(y_{\rho}),\tau'_{y}>\theta_{y}]}{\vert y\vert^{p+q+d-2}}.
\end{equation*}
If $q_{\sigma}=1$, then the latter asymptotics can be improved as 
\begin{equation*}
     G_K(x,y)\sim \frac{V(x)c_{\sigma}(\dist(y,\partial K))}{\vert y\vert^{p+d-1}},
\end{equation*}
with $c_{\sigma}$ a positive function which is asymptotically linear, and the moment assumption can be replaced by $\mathbf{E}\vert X\vert^{p+d-1+(2-p)^{+}}<\infty$ or by \ref{local_assump}.
\end{theorem}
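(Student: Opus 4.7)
The strategy relies on time reversal and the strong Markov property of the reversed walk at its first entrance time into the interior region. The path-reversal identity
\begin{equation*}
G_K(x,y) = \sum_{n\ge 0}\pr(y+S'(n)=x,\tau'_y>n)
\end{equation*}
lets us view the reversed walk as starting from the boundary-near point $y$. Applying the strong Markov property at the stopping time $\theta_y$, then reversing the post-$\theta_y$ excursion back to forward time, decomposes the sum as
\begin{equation*}
G_K(x,y) = \e\bigl[G_K(x,y_\rho)\ind_{\tau'_y>\theta_y}\bigr] + \sum_{n\ge 0}\pr(y+S'(n)=x,\tau'_y>n,\theta_y>n).
\end{equation*}
The second sum corresponds to trajectories that reach $x$ without ever entering $K_\rho$, and one verifies that it is negligible compared with the main term under the stated moment assumption. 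The identity then separates a boundary-layer contribution (the law of $(y_\rho,\theta_y)$) from an interior contribution $G_K(x,y_\rho)$ with a ``well-placed'' starting point.

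Because $y_\rho \in K_\rho$ by construction, the uniform interior asymptotics \ref{itB} of Theorem \ref{thm:deep} apply inside the expectation:
\begin{equation*}
G_K(x,y_\rho) \sim cV(x)\frac{u(y_\rho)}{|y_\rho|^{2p+d-2}}.
\end{equation*}
The remainder of the argument uses dominated convergence to interchange $\sim$ with $\e$. Standard moment bounds on the increments and on $\theta_y$ show that $|y_\rho|$ differs from $|y|$ by at most $|y|^{1-\rho}$ with overwhelming probability, so that $|y_\rho|^{2p+d-2}$ can be replaced by $|y|^{2p+d-2}$ at leading order. Combined with the $p$-homogeneity of $u$ and its $q_\sigma$-fold vanishing rate near $\sigma$, this produces the claimed denominator $|y|^{p+q+d-2}$ and the numerator $V(x)\e[u(y_\rho),\tau'_y>\theta_y]$.

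For the refined statement in the $C^2$-case $q_\sigma=1$, the tangent cone $K_\sigma$ at $\sigma$ is a half-space and near $\sigma$ the boundary of $K$ is well-approximated by the hyperplane $\partial K_\sigma$. The plan is to couple the reversed walk starting from $y$ with a reversed walk in $K_\sigma$ and apply Theorem \ref{thm:half-space} to compute the resulting half-space Green function. This evaluates $\e[u(y_\rho),\tau'_y>\theta_y]$ up to an asymptotically linear function of $\dist(y,\partial K)$ times $|y|^{p-1}$, which after simplification gives the announced refined form. The relaxed moment assumption $\e|X|^{p+d-1+(2-p)^+}<\infty$ is exactly what Theorem \ref{thm:half-space} requires.

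The principal obstacle is the quantitative control of the boundary excursion of $S'$. One needs sharp tail estimates on $|y_\rho|-|y|$ and on the survival probability $\pr(\tau'_y>\theta_y)$, both to justify uniform integrability in the dominated-convergence step and to pin down the exact power of $|y|$ in the normalization. These estimates are also the source of the appearance of $q_\sigma$ in the moment exponent: the homogeneity of $K_\sigma$ controls the decay of the probability that $S'$ leaves the boundary layer around $\sigma$ before exiting $K$, and this is what forces the threshold $r(p)>p+q_\sigma+d-2+(2-p)^+$.
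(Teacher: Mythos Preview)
Your high-level strategy for the general convex case---time-reverse, apply the strong Markov property at $\theta_y$, then invoke Theorem~\ref{thm:deep}\ref{itB} at the interior endpoint $y_\rho$---is the paper's approach. The paper implements it through a four-term split $G_K(x,y)=T_1+T_2+T_3+T_4$ that refines your two-term decomposition: $T_4$ is your main term $\e[G_K(x,y_\rho);\tau'_y>\theta_y]$ restricted to the high-probability event $\{\theta_y\le|y-x|^{2-3\epsilon},\,|S'(\theta_y)|\le|y-x|^{1-\epsilon/\alpha}\}$, while $T_1,T_2,T_3$ isolate respectively the small-$n$ regime, late entrance into $K_\rho$, and large displacement at entrance.

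Where your sketch is thin is in the phrases ``one verifies that it is negligible'' and ``standard moment bounds''. These carry most of the work. Showing $T_1+T_2+T_3=o(T_4)$ needs (i) a large-deviation bound on the truncated Green function for $n\le|y-x|^{2-\nu}$ (Proposition~\ref{Green_very_large}, via Fuk--Nagaev), (ii) exponential control of $\pr(\theta_y>n^{1-\epsilon},\tau'_y>n)$ from \cite[Lem.~14]{DeWa-15}, and (iii) a moment bound on $|S'(\theta_y)|$ (Lemma~\ref{Lemma_24_revisited}). Crucially, to conclude that the errors are $o(T_4)$ one also needs a \emph{lower bound} on $T_4$: this is where the boundary survival estimates of Section~\ref{sec:survival} (Proposition~\ref{lower_bound_survival_tangent} and Lemma~\ref{lower_bound_u_tangent}) enter, and they are what produce the exponent $q_\sigma$ and force the moment threshold $r(p)>p+q_\sigma+d-2+(2-p)^+$. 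You correctly identify that $q_\sigma$ governs the boundary layer, but you do not supply the mechanism.

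For the $\mathcal C^2$ case the paper does \emph{not} use the $\theta_y$ decomposition. It splits $G_K(x,y)=S_1+S_2$ in time as in Theorem~\ref{thm:deep}, evaluates $S_2$ through an invariance principle for the walk conditioned on the half-space event $\{T'_y>n\}$ (via \cite[Thm~2.3]{Du-78} and Lemma~\ref{thm:auxmeander}), and controls $S_1$ by replacing the full-space Green function in the $\Theta_y$-argument of Theorem~\ref{thm:deep} with the half-space Green function bound from Theorem~\ref{thm:half-space}. Your proposed coupling of $S'$ in $K$ with $S'$ in $K_\sigma$ is a genuinely different route; it is plausible in spirit but you would have to quantify the coupling error over times up to order $|y|^2$, which is not obviously simpler than the paper's method. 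Note also that Theorem~\ref{thm:half-space} itself requires $\e|X|^{d+1}<\infty$; the assumption $\e|X|^{p+d-1+(2-p)^+}<\infty$ in the $\mathcal C^2$ statement arises from the $S_1$ estimate (specifically \eqref{S1_boundary.2}), not directly from the half-space theorem.
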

Let us comment on three different aspects of Theorem \ref{thm:asymp_Green_boundary}. First, we will construct an example showing that our hypotheses are optimal (see Section \ref{sec:optimal}). Moreover, in the above result, the convergence is uniform on all $\sigma\in \Sigma$. Finally, Theorem \ref{thm:asymp_Green_boundary} easily implies the identification of the Martin boundary of $S$ killed when exiting ${K}$, answering the uniqueness problem of the discrete harmonic functions.
\begin{theorem}\label{thm:Martin_boundary}
Assume \ref{H:drift_zero}--\ref{H:strongly_irreducible} and \ref{H:moments}. The Martin kernel of $S$ killed on the boundary of ${K}$ is reduced to one point, which corresponds to the function $V$ in \eqref{definition_V(x)}. In particular, there is up to a scaling constant a unique positive harmonic function killed at the boundary of ${K}$. If $K$ is $\mathcal C^{2}$, \ref{H:moments} can be changed into the local condition \ref{local_assump}.
\end{theorem}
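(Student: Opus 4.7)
The plan is to leverage the Green function asymptotics established in Theorems~\ref{thm:deep} and~\ref{thm:asymp_Green_boundary} to compute the Martin kernel directly; the key observation is that in every regime $G_K(x,y)\sim V(x)\Phi(y)$ with $\Phi(y)$ independent of $x$, which forces the Martin compactification to reduce to a single boundary point associated with $V$.

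Fix a reference point $x_0\in K\cap \Lambda$ with $V(x_0)>0$; this is possible because $V$ is strictly positive on $K$ under \ref{H:drift_zero}--\ref{H:strongly_irreducible}, by the construction in \cite{DeWa-15,DeWa-19}. Assumption~\ref{H:strongly_irreducible} guarantees that $G_K(x_0,y)>0$ for all $y\in K\cap\Lambda$ outside some bounded set, so the Martin kernel
\[
     k(x,y):=\frac{G_K(x,y)}{G_K(x_0,y)}
\]
is well defined for such $y$. It therefore suffices to prove that $k(x,y_n)\to V(x)/V(x_0)$ for every sequence $\{y_n\}\subset K\cap\Lambda$ with $\vert y_n\vert\to\infty$.

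By compactness of $\overline{\Sigma}$, extract a subsequence along which $y_n/\vert y_n\vert\to \sigma^{*}\in\overline{\Sigma}$. If $\sigma^{*}\in\Sigma$, then $\dist(y_n,\partial K)\geq \alpha\vert y_n\vert$ for some $\alpha>0$ and all large $n$, so Theorem~\ref{thm:deep}\ref{itA} applied with $z\in\{x,x_0\}$ gives
\[
     G_K(z,y_n)\sim c\,V(z)\frac{u(y_n)}{\vert y_n\vert^{2p+d-2}},
\]
and taking the ratio yields $k(x,y_n)\to V(x)/V(x_0)$. If instead $\sigma^{*}\in\partial\Sigma$, Theorem~\ref{thm:asymp_Green_boundary} (in its general form, or in its sharpened form when $q_{\sigma^{*}}=1$) gives $G_K(z,y_n)\sim V(z)\Phi(y_n)$ with a factor $\Phi(y_n)$ depending only on $y_n$ (through $y_{n,\rho}$, $\theta_{y_n}$, $\tau'_{y_n}$, or $c_{\sigma^{*}}(\dist(y_n,\partial K))$), and again $k(x,y_n)\to V(x)/V(x_0)$. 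Since every subsequence yields the same limit, the full sequence converges to the same value.

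Consequently the Martin compactification of $K\cap\Lambda$ for the killed walk collapses to a single boundary point whose associated minimal harmonic function is proportional to $V$. The Poisson--Martin integral representation then forces every nonnegative harmonic function on $K\cap\Lambda$ vanishing on $\Lambda\setminus K$ to be a nonnegative scalar multiple of $V$, which is the claimed uniqueness. When $K$ is $\mathcal C^{2}$, the same argument applies with~\ref{H:moments} replaced by~\ref{local_assump}, since Theorems~\ref{thm:deep} and~\ref{thm:asymp_Green_boundary} continue to furnish the required factored asymptotics under that hypothesis. The main obstacle, which was already resolved in the preceding theorems, is precisely the uniform spatial factorization of $G_K(x,y)$ valid for both interior and boundary approaches of $y$; once this is in hand, the Martin-boundary identification follows by the soft argument above.
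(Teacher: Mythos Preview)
Your proposal is correct and follows essentially the same approach as the paper: both argue that Theorems~\ref{thm:deep} and~\ref{thm:asymp_Green_boundary} factor $G_K(x,y)$ as $V(x)\Phi(y)$ in every asymptotic regime, so the Martin kernel ratio $G_K(x,y_n)/G_K(x_0,y_n)$ converges to $V(x)/V(x_0)$ for any sequence $y_n\to\infty$. The paper's version is terser (it does not spell out the subsequence/compactness argument or the interior-versus-boundary case split), but the substance is identical.
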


\subsubsection*{Towards a Ney and Spitzer theorem in cones}

Ney and Spitzer consider in \cite{NeSp-66} random walks with non-zero drift in ${\bf Z}^d$ and prove that the Martin boundary is homeomorphic to the unit sphere ${\bf S}^{d-1}$. In \cite{IRLo-10,IR-09}, Ignatiouk-Robert and Loree prove that for random walks in ${\bf N}^d$ with a drift whose all entries are non-zero, the Martin boundary is homeomorphic to ${\bf S}^{d-1}\cap {\bf R}_+^d$. However, the question of a general non-zero drift (i.e., with zero entries allowed) is left opened in \cite{IRLo-10,IR-09}. Our results should allow to complete the picture; this will be the topic of future research.

\subsubsection*{Description of the methods used in our proofs}One of the standard approaches to the analysis of Green functions is based on local limit theorems for the process under consideration. For random walks confined to cones, one can apply local limit theorems from \cite{DeWa-15}. Since these results are applicable for $n>\varepsilon\vert y\vert^2$ only, one gets an asymptotically sharp lower estimate for $G_K$. To obtain an upper bound one needs good control over $\mathbf{P}(x+S(n)=y,\tau_x>n)$ for $n\ll \vert y\vert^2$. 
Caravenna and Doney \cite{Caravenna-Doney16} have used this approach to obtain necessary and sufficient conditions for validity of the local renewal theorem for one-dimensional non-restricted random walks. 
To control local large deviation probabilities in our model, we use recent results obtained in Raschel and Tarrago~\cite{RaTa-18} by using heat kernel estimates. These results, which are improvements of the local limit theorems of \cite{DeWa-15}, lead to Theorem~\ref{thm:deep} \ref{itB} and to the first claim in Theorem~\ref{thm:asymp_Green_boundary}. The analysis of local probabilities requires a slightly stronger moment assumptions than in Theorem~\ref{thm:deep} \ref{itA} and in the second half of Theorem~\ref{thm:asymp_Green_boundary} correspondingly. In order to derive these results, we use a different approach, where we control the whole
sum $\sum_{n\le\varepsilon \vert y\vert^2}\mathbf{P}(x+S(n)=y,\tau_x>n)$ instead of controlling every summand. This part is based on the functional limit theorem for walks in cones obtained in Duraj and Wachtel~\cite{DuWa-20}. As we have mentioned before, this approach requires less moments, but one needs to impose stronger regularity conditions on the boundary of the cone.

\subsubsection*{Structure and sketch of the results}
Our paper is organized as follows:
\begin{itemize}
     \item Section \ref{sec:interior}: proof of Theorem \ref{thm:deep} on the Green function asymptotics in the interior domain.
     \item Section \ref{sec:half-space}: proof of Theorem \ref{thm:half-space} on the Green function asymptotics along the boundary in the case of the half-space; this result has its own interest and will also be crucially used in the next section, in the general convex case.
     \item Section \ref{sec:boundary}: proof of Theorem \ref{thm:asymp_Green_boundary} on the Green function asymptotics along the boundary in the general case; proof of Theorem \ref{thm:Martin_boundary} on the structure of the Martin boundary (uniqueness problem).
     \item Section \ref{sec:optimal}: optimality of the moment assumptions in Theorems \ref{thm:deep} and \ref{thm:asymp_Green_boundary}.
     \item Section \ref{sec:survival}: proof of various lower bounds on the survival probability, which are used when showing Theorem \ref{thm:asymp_Green_boundary}.
\end{itemize}

\subsubsection*{Acknowledgments} 
KR would like to thank Rodolphe Garbit, Irina Ignatiouk-Robert and Sami Mustapha for various discussions concerning Martin boundary and the uniqueness problem for harmonic functions.

\section{Asymptotics of the Green function far from the boundary}
\label{sec:interior}

In this section, we prove Theorem \ref{thm:deep}.

\subsubsection*{Sketch of the proof} 
The proof runs as follows. Fix some $\varepsilon>0$ and split $G_K(x,y)$ into two parts:
\begin{align*}
G_K(x,y)&=\sum_{n<\varepsilon\vert y\vert^2}\mathbf{P}(x+S(n)=y,\tau_x>n)
+\sum_{n\ge\varepsilon \vert y\vert^2}\mathbf{P}(x+S(n)=y,\tau_x>n)\\
&=:S_1(x,y,\varepsilon)+S_2(x,y,\varepsilon).
\end{align*}
The main idea is that the first term will be negligible, meaning that
\begin{equation}
\label{lim_S_1}
     \lim_{\varepsilon\to0}\limsup_{\vert y\vert\to\infty}\frac{\vert y\vert^{2p+d-2}}{u(y)}S_1(x,y,\varepsilon)=0,
\end{equation}
while the second term $S_2(x,y,\varepsilon)$ will provide the main contribution in the Green function asymptotics. The asymptotic analysis \eqref{lim_S_1} of $S_1(x,y,\varepsilon)$ is very different under the hypotheses \ref{itA} and \ref{itB} of Theorem~\ref{thm:deep}; on the contrary, the study of $S_2(x,y,\varepsilon)$ will be done uniformly in the two cases.

\subsubsection*{Asymptotics of $S_2(x,y,\varepsilon)$}

Let $\rho>0$ small enough (to be chosen later), $A=\epsilon^{-1}$ and
\begin{equation*}
     K_{n,\epsilon}^{A}=\{z\in K: \vert z\vert \leq A\sqrt{n},\, \dist(z,\partial K)\geq n^{1-\epsilon}\}.
\end{equation*}
By \cite[Prop.~9]{RaTa-18}, 
\begin{equation*}
\frac{n^{p/2+d/2}}{u\left(\frac{y}{\sqrt{n}}\right)}\mathbf{P}(x+S(n)=y,\tau_x>n)=
\varkappa H_0 V(x)e^{-\vert y\vert^2/2n}+o(1)
\end{equation*} 
uniformly in $y\in K_{n,\epsilon}^{A}$, and by \cite[Thm~5]{DeWa-15}, 
\begin{equation*}
n^{p/2+d/2}\mathbf{P}(x+S(n)=y,\tau_x>n)=
\varkappa H_0 V(x)u\left(\frac{y}{\sqrt{n}}\right)e^{-\vert y\vert^2/2n}+o(1)
\end{equation*}
uniformly in $y\in K$. As $\vert y\vert\to\infty$ with $\dist(y,\partial K)\geq \vert y\vert^{1-\rho}$ and $\rho$ small enough, one has $y\in K_{n,\epsilon}^{A}$ for all $\epsilon\vert y\vert^{2}\leq n\leq \vert y\vert^{2+\epsilon'}$, for some $\epsilon'>0$. Hence,
\begin{align*}
S_2(x,y,\varepsilon)
&=\varkappa H_0 V(x)\sum_{ \varepsilon\vert y\vert^2\leq n\leq \vert y\vert^{2+\epsilon'}}\frac{1}{n^{p/2+d/2}}
u\Bigl(\frac{y}{\sqrt{n}}\Bigr)e^{-\vert y\vert^2/2n}\\
&\quad\quad+o\left(u(y)\sum_{\varepsilon\vert y\vert^2\leq n\leq \vert y\vert^{2+\epsilon'}}n^{-p-d/2}\right)+
o\left(\sum_{n\ge \vert y\vert^{2+\epsilon'}}\frac{1}{n^{p/2+d/2}}\right)\\
&=\varkappa H_0 V(x)u(y)\sum_{n\ge \varepsilon\vert y\vert^2}\frac{1}{n^{p+d/2}}e^{-\vert y\vert^2/2n}\\
&\quad\quad+
o\left((u(y)\vert y\vert^{-p}+\vert y\vert^{-\epsilon'})\vert y\vert^{-p-d+2}\right)\\
&=\varkappa H_0 V(x)u(y)\vert y\vert^{-2p-d+2}\int_\varepsilon^\infty z^{-p-d/2}e^{-1/(2z)}dz\\
&\quad\quad
+o\left((u(y)\vert y\vert^{-p}+\vert y\vert^{-\epsilon'})\vert y\vert^{-p-d+2}\right).
\end{align*}
Letting here $\varepsilon\to0$ and recalling that $u(y)\ge c\vert y\vert^{p-\rho}$ for $\dist(y,\partial K)\ge \vert y\vert^{1-\rho}$, see \cite[Lem.~19]{DeWa-15} and \cite{Va-99}, we obtain that for $\rho$ small enough,
\begin{equation}
\label{lim_S_2}
\lim_{\varepsilon\to0}\lim_{\substack{\vert y\vert\to\infty\\ \dist(y,\partial K)>\vert y\vert^{1-\rho}}}\frac{\vert y\vert^{2p+d-2}}{u(y)}S_2(x,y,\varepsilon)
=\varkappa H_0 V(x)\int_0^\infty z^{-p-d/2}e^{-1/(2z)}dz.
\end{equation}

\subsubsection*{Asymptotics of $S_1(x,y,\varepsilon)$ in case \ref{itA}}

Let us prove the first part of Theorem \ref{thm:deep}. It remains to show that \eqref{lim_S_1} holds.
Fix additionally some small $\delta>0$ and define
\begin{equation*}
     \Theta_y:=\inf\{n\ge1:x+S(n)\in B_{\delta,y}\},
\end{equation*}
where $B_{\delta,y}$ denotes the ball of radius $\delta\vert y\vert$ around the point $y$. Then we have
\begin{align}
\label{S1_estim_1}
\nonumber
&S_1(x,y,\varepsilon)=\sum_{n<\varepsilon\vert y\vert^2}\mathbf{P}(x+S(n)=y,\tau_x>n\ge\Theta_y)\\
\nonumber
&=\sum_{n<\varepsilon\vert y\vert^2}\sum_{k=1}^n\sum_{z\in B_{\delta,y}}
\mathbf{P}(x+S(k)=z,\tau_x>k=\Theta_y)\mathbf{P}(z+S(n-k)=y,\tau_z>n-k)\\
\nonumber
&\le \sum_{k<\varepsilon\vert y\vert^2}\sum_{z\in B_{\delta,y}}
\mathbf{P}(x+S(k)=z,\tau_x>k=\Theta_y)\sum_{j<\varepsilon\vert y\vert^2-k}\mathbf{P}(z+S(j)=y)\\
&\le\mathbf{E}\left[G^{(\varepsilon\vert y\vert^2)}(y-x-S(\Theta_y));\tau_x>\Theta_y,\Theta_y\le\varepsilon\vert y\vert^2\right],
\end{align}
where
\begin{equation*}
G^{(t)}(z):=\sum_{n<t}\mathbf{P}(S(n)=z).
\end{equation*}

We first focus on the case $d\ge3$. Then, according to \cite[Thm~2]{Uc-98}, for all $z\in{\bf Z}^d$,
\begin{equation}
\label{Green_full}
     G(z):=G^{(\infty)}(z)\le\frac{C}{1+\vert z\vert^{d-2}},
\end{equation}
provided that $\mathbf{E}\vert X_1\vert^{s_d}<\infty$, where $s_d=2+\varepsilon$ for $d=3,4$ and
$s_d=d-2$ for $d\ge5$. Since $r_1(p)=p+d-2+(2-p)^+>s_d$, \eqref{Green_full} yields
\begin{align}
\label{S1_estim_2}
\nonumber
&S_1(x,y,\varepsilon)\\
&\le C\mathbf{E}\left[\frac{1}{1+\vert y-x-S(\Theta_y)\vert^{d-2}};\tau_x>\Theta_y,\Theta_y\le\varepsilon\vert y\vert^2\right]\\
\nonumber
&\le C\mathbf{P}(\vert y-x-S(\Theta_y)\vert\le\delta^2\vert y\vert,\tau_x>\Theta_y,\Theta_y\le\varepsilon\vert y\vert^2)
+\frac{C(\delta)}{\vert y\vert^{d-2}}\mathbf{P}(\tau_x>\Theta_y,\Theta_y\le\varepsilon\vert y\vert^2).
\end{align}
Noting now that $\vert y-x-S(\Theta_y)\vert\le\delta^2\vert y\vert$ yields $\vert X(\Theta_y)\vert>\delta(1-\delta)\vert y\vert$
and using our moment assumption, we conclude that
\begin{align}
\label{S1_estim_3}
\nonumber
&\mathbf{P}(\vert y-x-S(\Theta_y)\vert \le\delta^2\vert y\vert ,\tau_x>\Theta_y,\Theta_y<\varepsilon\vert y\vert^2)\hspace{1cm}\\
\nonumber
&\hspace{1cm}\le \sum_{k<\varepsilon\vert y\vert^2}\mathbf{P}(\vert X(k)\vert >\delta(1-\delta)\vert y\vert ,\tau_x>k=\Theta_y)\\
\nonumber
&\hspace{1cm}\le \mathbf{P}(\vert X\vert >\delta(1-\delta)\vert y\vert)\sum_{k<\varepsilon\vert y\vert^2}\mathbf{P}(\tau_x>k-1)\\
&\hspace{1cm}=o\left(\vert y\vert^{-r_1(p)}\mathbf{E}[\tau_x;\tau_x<\vert y\vert^2]\right)=o\left(\vert y\vert^{-d-p+2}\right).
\end{align}
Recalling that $V$ is harmonic for $S(n)$ killed at leaving $K$, we obtain
\begin{align*}
&\mathbf{P}(\tau_x>\Theta_y,\Theta_y<\varepsilon\vert y\vert^2)\\
&\hspace{1cm}=\sum_{k<\varepsilon\vert y\vert^2}\sum_{z:\vert z-y\vert\le\delta\vert y\vert}\mathbf{P}(\tau_x>k,\Theta_y=k,x+S(k)=z)\\
&\hspace{1cm}=\sum_{k<\varepsilon\vert y\vert^2}\sum_{z:\vert z-y\vert\le\delta\vert y\vert}\frac{V(x)}{V(z)}\mathbf{P}^{(V)}(\Theta_y=k,x+S(k)=z)\\
&\hspace{1cm}\le\frac{V(x)}{\min_{z\in K:\vert z-y\vert\le\delta\vert y\vert}V(z)}\mathbf{P}^{(V)}(\Theta_y<\varepsilon\vert y\vert^2).
\end{align*}
It follows from the assumption $\dist(y,\partial K)\ge\alpha\vert y\vert$ and \cite[Lem.~13]{DeWa-15} that for sufficiently small $\delta>0$,
\begin{equation*}
\min_{z\in K:\vert z-y\vert\le\delta\vert y\vert}V(z)\ge C\vert y\vert^p.
\end{equation*}
As a result,
\begin{equation*}
\vert y\vert^p \mathbf{P}(\tau_x>\Theta_y,\Theta_y<\varepsilon\vert y\vert^2)
\le C(x)\mathbf{P}^{(V)}\left(\max_{n<\varepsilon\vert y\vert^2}\vert x+S(n)\vert>(1-\delta)\vert y\vert\right).
\end{equation*}
Applying now the functional limit theorem for $S(n)$ under $\mathbf{P}^{(V)}$, see Theorem 2 and Corollary 3
in \cite{DuWa-20}, we conclude that
\begin{align}
\label{S1_estim_4}
\lim_{\varepsilon\to0}\limsup_{\vert y\vert\to\infty} \vert y\vert^p \mathbf{P}(\tau_x>\Theta_y,\Theta_y<\varepsilon\vert y\vert^2)=0.
\end{align}
Note that the functional limit theorem from \cite{DuWa-20} only requires $p\vee (2+\epsilon)$-moments. Combining \eqref{S1_estim_2}--\eqref{S1_estim_4}, we infer that \eqref{lim_S_1} is valid under the 
assumption $\mathbf{E}\vert X_1\vert^{r_1(p)}<\infty$ in all dimensions $d\ge 3.$

Assume now that \ref{local_assump} holds. It is clear that this restriction implies $\mathbf{E}\vert X_1\vert^p<\infty$. Therefore, \cite[Thm~5]{DeWa-15} is still applicable and \eqref{lim_S_2}
remains valid for all random walks satisfying \ref{local_assump}. In order to show that \eqref{lim_S_1}
remains valid as well, we notice that
\begin{align*}
&S_1(x,y,\varepsilon)\\
&\hspace{0.5cm}\le C\mathbf{E}\left[\frac{1}{1+\vert y-x-S(\Theta_y)\vert^{d-2}};\vert y-x-S(\Theta_y)\vert\le\delta^2\vert y\vert,\tau_x>\Theta_y,\Theta_y\le\varepsilon\vert y\vert^2\right]\\
&\hspace{1.5cm}+\frac{C(\delta)}{\vert y\vert^{d-2}}\mathbf{P}(\tau_x>\Theta_y,\Theta_y\le\varepsilon\vert y\vert^2).
\end{align*}
In view of \eqref{S1_estim_4}, we have to estimate the first term on the right-hand side only. 
For any $z$ such that $\vert z-y\vert\le \delta^2\vert y\vert$ we have
\begin{align*}
&\mathbf{P}(x+S(\Theta_y)=z,\tau_x>\Theta_y,\Theta_y\le \varepsilon\vert y\vert^2)\\
&\hspace{1cm}\le\sum_{k=1}^{\varepsilon\vert y\vert^2}\sum_{z'\in K\setminus B_{\delta,y}}
\mathbf{P}(x+S(k-1)=z',\tau_x>k-1)\mathbf{P}(X(k)=z-z').
\end{align*}
Since $\vert z-z'\vert>\delta(1-\delta)\vert y\vert$, we infer from \ref{local_assump} that
\begin{align}
\label{local_1}
\nonumber
&\mathbf{P}(x+S(\Theta_y)=z,\tau_x>\Theta_y,\Theta_y\le \varepsilon\vert y\vert^2)\\
\nonumber
&\hspace{1cm}\le C(\delta)\vert y\vert^{-p-d+1}f(\delta(1-\delta)\vert y\vert)\sum_{k=1}^{\varepsilon\vert y\vert^2}\mathbf{P}(\tau_x>k-1)\\
&\hspace{1cm} \le C(\delta)\vert y\vert^{-p-d+1}f(\delta(1-\delta)\vert y\vert)\mathbf{E}[\tau_x;\tau_x<\vert y\vert^2].
\end{align}
Here and in the following we use that $\mathbf{E}[\tau_x;\tau_x<\vert y\vert^2]\sim C\vert y\vert^{-p+2}$ if $p\le 2$, as shown in \cite[Thm~1]{DeWa-15}. For every positive integer $m$, there are $O(m^{d-1})$ lattice points $z$ such that $\vert z-y\vert\in(m,m+1]$. Then, using \eqref{local_1}, we obtain
\begin{align*}
&\mathbf{E}\left[\frac{1}{1+\vert y-x-S(\Theta_y)\vert^{d-2}};\vert y-x-S(\Theta_y)\vert\le\delta^2\vert y\vert,\tau_x>\Theta_y,\Theta_y\le\varepsilon\vert y\vert^2\right]\\
&\hspace{1cm}\le C(\delta)\vert y\vert^{-p-d+1}f(\delta(1-\delta)\vert y\vert)\mathbf{E}[\tau_x;\tau_x<\vert y\vert^2]\sum_{m=1}^{\delta^2\vert y\vert}\frac{m^{d-1}}{1+m^{d-2}}\\
&\hspace{1cm} \le C(\delta)\vert y\vert^{-p-d+3}f(\delta(1-\delta)\vert y\vert)\mathbf{E}[\tau_x;\tau_x<\vert y\vert^2].
\end{align*}
Recalling that $u^{(3-p)\vee1}f(u)\to0$, we conclude that
\begin{multline*}
\mathbf{E}\left[\frac{1}{1+\vert y-x-S(\Theta_y)\vert^{d-2}};\vert y-x-S(\Theta_y)\vert\le\delta^2\vert y\vert,\tau_x>\Theta_y,\Theta_y\le\varepsilon\vert y\vert^2\right]\\
=o(\vert y\vert^{-p-d+2}).
\end{multline*}
This completes the proof of the theorem for $d\ge 3$.

We now focus on $d=2$; in this case, we cannot use the full Green function.
We will obtain bounds for $G^{(t)}(x)$ directly from the local limit theorem for unrestricted walks.
More precisely, we shall use Propositions 9 and 10 from Chapter 2 in Spitzer's book \cite{Sp-76}, which assert that as $n\rightarrow \infty$,
\begin{equation}
\label{eq_Spitzer}
     \mathbf{P}(S(n)=z)=\frac{1}{2\pi n}e^{-\vert z\vert^2/2n}+\frac{\rho(n,z)}{\vert z\vert^2\vee n},
\end{equation}
where as $n\rightarrow \infty$,
\begin{equation*}
\sup_{z\in{\bf Z}^2}\rho(n,z)\to0.
\end{equation*}
This asymptotic representation implies that for all $t\ge2$,
\begin{equation}
\label{Green_uniform}
\sup_{z\in{\bf Z}^2}G^{(t)}(z)\le C\log t.
\end{equation}
Furthermore, for $\vert z\vert\to\infty$ and $t\le a\vert z\vert^2$, one has
\begin{equation*}
     G^{(t)}(z)\le \sum_{n=1}^{a\vert z\vert^2}\frac{1}{2\pi n} e^{-\vert z\vert^2/2n}+o(1)
=\frac{1}{2\pi}\int_0^a\frac{1}{v}e^{-1/2v}dv+o(1).
\end{equation*}
As a result,
\begin{equation}
\label{Green_large}
\sup_{z\in{\bf Z}^2}G^{(a\vert z\vert^2)}(z)\le C(a)<\infty.
\end{equation}
Using \eqref{Green_uniform} and \eqref{Green_large}, we obtain
\begin{multline*}
S_1(x,y,\varepsilon)
\le C\log\vert y\vert\mathbf{P}(\vert y-x-S(\Theta_y)\vert\le\delta^2\vert y\vert,\tau_x>\Theta_y,\Theta_y\le\varepsilon\vert y\vert^2)\\
+C(\varepsilon)\mathbf{P}(\tau_x>\Theta_y,\Theta_y\le\varepsilon\vert y\vert^2).
\end{multline*}
According to \eqref{S1_estim_3},
\begin{align*}
\mathbf{P}(\vert y-x-S(\Theta_y)\vert\le\delta^2\vert y\vert,\tau_x>\Theta_y,\Theta_y\le\varepsilon\vert y\vert^2)
&=o(\vert y\vert^{-r_1(p)}\mathbf{E}[\tau_x;\tau_x<\vert y\vert^2])\\
&=o(\vert y\vert^{-p}/\log\vert y\vert).
\end{align*}
Combining this with \eqref{S1_estim_4}, we conclude that \eqref{lim_S_1} holds for $d=2$. The proof of Theorem~\ref{thm:deep} \ref{itA} is completed.

\subsubsection*{Preliminary estimates for the proof of Theorem~\ref{thm:deep} \ref{itB}}

In this part, we give some bounds on the local probability $\pr(x+S(n)=y,\tau_{x}>n)$, when $\vert x-y\vert$ is between the order of fluctuations $n^{1/2}$ and $n^{1/2+\kappa}$, for some $\kappa$ small enough. The main result will be given in Proposition \ref{bound_local_far_away}; it needs three lemmas, stated as Lemmas \ref{distribution_large_range}, \ref{boundAwayFixedX} and \ref{roughBoundLocal}.

We will use the coupling of Zaitsev and G\"otze (see \cite[Thm~4]{GoZa-09} and \cite[Lem.~17]{DeWa-15}) for random walks having increments satisfying to \ref{H:moments}. Suppose that $X$ has moments of order $r(p)$, with $r(p)>p+d-2+(2-p)^{+}$ and $r(p)>2+\delta$. By \cite[Thm~4]{Ei-89}, there exists a constant $K$ such that for $\gamma\leq 1/2-1/r(p)$,
\begin{equation}\label{couplingEinmahl}
\pr\left(\sup_{0\leq s\leq n}\vert S(\lfloor k\rfloor )-B(k)\vert \geq n^{1/2-\gamma}\right)\leq Kn^{-r},
\end{equation}
with 
\begin{equation}
\label{eq:def_local_r}
     r=r(p)(1/2-\gamma)-1.
\end{equation}

In the proof of the following lemma, we use several estimates from \cite{RaTa-18} on the transition probabilities of a Brownian motion in a cone. Those estimates come from general Gaussian estimates for the heat kernel in a Lipschitz domain, see \cite[Sec.~6]{GySal-11} for general statements. The first inequality from \cite[Thm~5.11]{GySal-11} gives an upper bound for the transition probabilities in $K$ for the Brownian motion started at $y\in K$ killed outside $K$:
\begin{equation}\label{estimateK}
\pr(y+ B(1)\in dz,\tau_{y}^{\bm}>1)\leq C\pr(\tau_{y}^{\bm}>1)\exp(-\vert z-y\vert^{2}/c)dz,
\end{equation}
for some positive constants $c$ and $C$. The survival time of the Brownian motion in $K$ is well estimated by the réduite, as shows the following inequality from \cite[Thm~5.4]{GySal-11}:
\begin{equation}
\label{estimatek}
     \pr(\tau_{y}^{\bm}>1)\leq Cu(y).
\end{equation}

Define 
\begin{equation*}
     K_{n,\epsilon}=K_{n,\epsilon}^{\infty}=\{z\in K: \dist(z,\partial K)\geq n^{1-\epsilon}\}.
\end{equation*}

\begin{lemma}\label{distribution_large_range}
There exist $\kappa,\epsilon,c,C>0$ such that for all $n$ large enough and $A\sqrt{n}\leq t\leq n^{1/2+\kappa}$, 
\begin{equation*}
     \pr(\vert S(n)\vert>t,\tau_{y}>n)\leq C\left(u(y/\sqrt{n})\exp(-t^{2}/(cn))+n^{-r}\right)
\end{equation*}
and for $y\in{K}_{n,\epsilon}$ such that $\vert y\vert\leq n^{1/2+\kappa}$,
\begin{equation*}\pr(\tau_{y}>n)\leq Cu(y/\sqrt{n}).\end{equation*}
\end{lemma}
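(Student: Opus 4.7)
The strategy is to transfer both bounds from Brownian motion in $K$, where the heat-kernel upper bound \eqref{estimateK} and réduite estimate \eqref{estimatek} are available, to the random walk via the strong coupling \eqref{couplingEinmahl}. Fix $\gamma = 1/2 - 1/r(p)$ (or any admissible value close to it) and set $r$ as in \eqref{eq:def_local_r}, so that the event
\begin{equation*}
\Omega_n = \Bigl\{\sup_{k\le n}\vert S(k) - B(k)\vert \le n^{1/2-\gamma}\Bigr\}
\end{equation*}
satisfies $\pr(\Omega_n^c) \le K n^{-r}$. Choose the small constants $\kappa, \epsilon > 0$ strictly less than $1/2 - \gamma$, so that the coupling error $n^{1/2-\gamma}$ is much smaller than both $n^{1-\epsilon}$ (the minimal distance to $\partial K$ of points of $K_{n,\epsilon}$) and $|y| \le n^{1/2+\kappa}$.

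\textbf{First inequality.} On $\Omega_n$, the events $\{\vert S(n)\vert > t\}$ and $\{\tau_y > n\}$ together force $\vert B(n)\vert \ge t - n^{1/2-\gamma} \ge t/2$ (for $t \ge A\sqrt n$ with $A$ large) and $y + B(k)$ stays in the enlargement $K^+ := K + \{w : \vert w\vert \le n^{1/2-\gamma}\}$ for all $k \le n$. By Brownian scaling $B(n\cdot) \stackrel{\text{fdd}}{=} \sqrt n\, B(\cdot)$, this yields
\begin{equation*}
\pr\bigl(\vert S(n)\vert > t,\ \tau_y > n\bigr) \le \pr\bigl(\vert B(1)\vert > t/(2\sqrt n),\ \widetilde{\tau}^{\bm}_{y/\sqrt n} > 1\bigr) + K n^{-r},
\end{equation*}
where $\widetilde{\tau}^{\bm}$ is the exit time of Brownian motion from $K$ enlarged by $n^{-\gamma}$. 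Since $n^{-\gamma}\to 0$, the enlarged cone is a perturbation of $K$ on which \eqref{estimateK} and \eqref{estimatek} continue to hold up to universal multiplicative constants. Integrating \eqref{estimateK} over $\{z : \vert z\vert > t/(2\sqrt n)\}$ and applying \eqref{estimatek} produces the Gaussian factor and the factor $u(y/\sqrt n)$ as claimed.

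\textbf{Second inequality.} The same coupling gives
\begin{equation*}
\pr(\tau_y > n) \le \pr\bigl(\widetilde{\tau}^{\bm}_{y/\sqrt n} > 1\bigr) + K n^{-r} \le C u(y/\sqrt n) + K n^{-r}
\end{equation*}
via \eqref{estimatek}. To absorb $Kn^{-r}$ into $C u(y/\sqrt n)$, I use homogeneity $u(tz) = t^p u(z)$ and a lower bound $u(z) \ge c\,\dist(z,\partial K)^p$ available from \cite[Lem.~19]{DeWa-15} for $z$ at distance of order $\vert z\vert$ from $\partial K$, more generally from the standard boundary-Harnack type argument. Since $y \in K_{n,\epsilon}$ and $\vert y\vert \le n^{1/2+\kappa}$, the rescaled point $y/\sqrt n$ lies at distance at least $n^{1/2-\epsilon}$ from $\partial K$, so $u(y/\sqrt n) \ge c\, n^{-\epsilon p}$ up to constants. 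Choosing $\epsilon$ small enough that $\epsilon p < r$ makes $n^{-r} = o(u(y/\sqrt n))$, giving the stated bound.

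\textbf{Main difficulty.} The technical crux is that the coupling naturally produces the enlarged cone $K^{+}$ on the Brownian side, whereas \eqref{estimateK} and \eqref{estimatek} are stated for $K$ itself. After the Brownian rescaling the enlargement shrinks to $n^{-\gamma} = o(1)$, so that the heat-kernel estimates of \cite[Sec.~6]{GySal-11} transfer to the perturbed cone with universal constants; making this uniform in $n$ requires some care, and is the step on which the whole coupling argument hinges. The rest of the proof is a careful orchestration of the three small parameters $\kappa, \epsilon, \gamma$, chosen so that (i) \eqref{couplingEinmahl} applies with the required exponent $r$, (ii) the coupling error is negligible against the boundary distance of points in $K_{n,\epsilon}$, and (iii) $n^{-r}$ is absorbed into $u(y/\sqrt n)$.
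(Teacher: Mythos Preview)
Your overall strategy is correct and matches the paper's: couple with Brownian motion via \eqref{couplingEinmahl} and invoke \eqref{estimateK}--\eqref{estimatek}. The difference lies exactly where you flag the main difficulty, and there your argument has a real gap while the paper uses a cleaner device.

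You pass from $\{\tau_y>n\}$ to a Brownian event in the \emph{enlarged} domain $K+B(0,n^{-\gamma})$ (after rescaling) and then assert that the heat-kernel bounds transfer to this family of perturbed domains with $n$-uniform constants. This is not justified: the enlarged domain is not a cone, its r\'eduite is not $u$, and you still owe a comparison between its survival probability and $u(y/\sqrt{n})$. The paper avoids the issue by instead \emph{shifting the starting point}: pick $x_0$ with $\vert x_0\vert=1$, $x_0+K\subset K$ and $\dist(Rx_0+K,\partial K)>1$, and set $y^{+}:=y+Rx_0 n^{1/2-\gamma}$. On the coupling event, $\{\tau_y>n\}$ forces $y^{+}+B(k)\in K$ for all $k\le n$, so the Brownian motion stays in the \emph{original} cone $K$ and \eqref{estimateK}--\eqref{estimatek} apply directly (this is the construction of \cite[Lem.~20]{DeWa-15}). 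The remaining task is to pass from $\pr(\tau^{\bm}_{y^{+}/\sqrt{n}}>1)$ back to $u(y/\sqrt{n})$, which the paper does via the local H\"older continuity of $x\mapsto\pr(\tau^{\bm}_x>1)$ from \cite[Prop.~18]{RaTa-18}:
\begin{equation*}
\pr(\tau^{\bm}_{y^{+}/\sqrt{n}}>1)\le \pr(\tau^{\bm}_{y/\sqrt{n}}>1)+C_\alpha\bigl(\vert y\vert/\sqrt{n}\bigr)^{\chi}n^{-\alpha\gamma}.
\end{equation*}
This is precisely where the constraint $\vert y\vert\le n^{1/2+\kappa}$ enters, to control $(\vert y\vert/\sqrt{n})^{\chi}\le n^{\chi\kappa}$; one then chooses $\kappa$ so that $\chi\kappa<\alpha\gamma$, and $\epsilon$ so that $u(y/\sqrt{n})\ge n^{-p\epsilon}$ absorbs the H\"older error. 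Your approach can be completed along the same lines once you observe that $K+B(0,n^{-\gamma})\subset -Rx_0 n^{-\gamma}+K$, which reduces your enlarged-domain survival probability to exactly this shifted-start quantity; but as written the key transfer is asserted rather than proved.
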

\begin{proof}
Choose $x_{0}\in{\bf R}^{d}$ and $R_{0}>0$ such that
\begin{equation*}
     \vert x_{0}\vert =1,\quad x_{0}+{K}\subset {K}\quad \text{and}\quad \dist(Rx_{0}+{K}, \partial{K})>1.
\end{equation*}
Let $y\in {K}_{n,\epsilon}^{A}$ and set $y^{+}:=y+ Rx_{0}n^{1/2-\gamma}$. Using the same construction as in the proof of \cite[Lem.~20]{DeWa-15}, we get
\begin{multline*}
     \pr(\vert S(n)\vert>t,\tau_{y}>n)\\\leq \int_{\vert z-y/\sqrt{n}\vert>t/\sqrt{n}-2Rn^{-\gamma}}\pr(y^{+}/\sqrt{n}+B(1)\in dz,\tau_{y^{+}/\sqrt{n}}^{\bm}>1)+O(n^{-r}).
\end{multline*}
Using \eqref{estimateK} yields 
\begin{multline}
\int_{\vert z-y/\sqrt{n}\vert>t/\sqrt{n}-2Rn^{-\gamma}}\pr(y^{+}/\sqrt{n}+B(1)\in dz,\tau^{\bm}_{y^{+}/\sqrt{n}}>1)\\
\leq C\pr(\tau^{\bm}_{y^{+}/\sqrt{n}}>1)\int_{\vert z-y/\sqrt{n}\vert>t/\sqrt{n}-2Rn^{-\gamma}}C\exp(-\vert z-y^{+}/\sqrt{n}\vert^{2}/c)dz.\label{first_bound_probability_away}
\end{multline}
By the local H\"older continuity of  the survival probability $\pr(\tau_{x}^{\bm}>1)$ in $x$ (see \cite[Prop.~18]{RaTa-18}), there exist $\alpha,\chi,C_{\alpha}>0$ such that 
\begin{equation*}
     \pr(\tau^{\bm}_{y^{+}/\sqrt{n}}>1)\leq \pr(\tau^{\bm}_{y/\sqrt{n}}>1)+C_{\alpha}\left(\vert y\vert/\sqrt{n}\right)^{\chi}n^{-\alpha\gamma}.
\end{equation*}
Hence, using \eqref{estimatek} yields
\begin{equation*}
     \pr(\tau^{\bm}_{y^{+}/\sqrt{n}}>1)\leq Cu(y/\sqrt{n})+C_{\alpha}n^{\chi\kappa-\alpha\gamma}.
\end{equation*}
By \cite[Lem.~19]{DeWa-15}, one has
\begin{equation}\label{lowerBoundu}
u(x)\geq cd(x,\partial K)^{p},
\end{equation}
so that $u(y/\sqrt{n})\geq \dist(y/\sqrt{n},{K})^{p}\geq n^{-p\epsilon}$; choosing $\kappa$ such that $\alpha\gamma-\chi\kappa>0$ and then $\epsilon$ such that $\epsilon\leq (\alpha\gamma-\chi\kappa)/p$ yields that for some $C>0$ and $y\in{K}_{n,\epsilon}$ with $\vert y\vert\leq n^{1/2+\kappa}$,
\begin{equation*}
     \pr(\tau^{\bm}_{y^{+}/\sqrt{n}}>1)\leq Cu(y/\sqrt{n}).
\end{equation*}
Hence, integrating in \eqref{first_bound_probability_away} over the angular coordinates gives
\begin{multline*}
\int_{\vert z-y/\sqrt{n}\vert>t/\sqrt{n}-2Rn^{-\gamma}}\pr(y^{+}/\sqrt{n}+B(1)\in dz,\tau^{\bm}_{y^{+}/\sqrt{n}}>1)\\
\leq Cu(y/\sqrt{n})\int_{z>t/\sqrt{n}-4Rn^{-\gamma}}\exp(-\vert z\vert^{2}/c)dz
\end{multline*}
for some $C>0$. The latter inequality for $t=0$ gives the second inequality of Lemma~\ref{distribution_large_range}. For the first one, notice that there exists $C>0$ such that $\int_{x}^{\infty}\exp(-z^{2})dz\leq C\exp(-x^{2})$. Choosing $\kappa<\gamma$ yields $\exp((t/\sqrt{n}-4Rn^{-\gamma})^{2}/c)\sim \exp((t/\sqrt{n})^{2}/c)$ for $t\leq n^{1/2+\kappa}$, and finally we obtain that for some constant $C>0$,
\begin{equation*}
     \int_{\vert z-y/\sqrt{n}\vert>t/\sqrt{n}-2n^{-\gamma}}\pr(y^{+}/\sqrt{n}+B(1)\in dz,\tau^{\bm}_{y^{+}/\sqrt{n}}>1)\leq C u(y/\sqrt{n})\exp((t/\sqrt{n})^{2}/c).\qedhere
\end{equation*}
\end{proof}

We can extend the latter result by relaxing the condition $y\in{K}_{n,\epsilon}$.
\begin{lemma}\label{boundAwayFixedX}
Let $x\in{K}$. There exists $C>0$ such that for all $t\leq n^{1/2+\kappa}$ ($\kappa$ being as in Lemma \ref{distribution_large_range}),
\begin{equation*}
     \pr(\vert S(n)\vert \geq t, \tau_{x}\geq n)\leq C\left(V(x)n^{-p/2}\exp(-t^{2}/(cn))+ n^{-r}\right).
\end{equation*}
\end{lemma}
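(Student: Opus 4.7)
My plan is to reduce Lemma \ref{boundAwayFixedX} to Lemma \ref{distribution_large_range} by a Markov decomposition at time $n/2$. Heuristically, even when $x$ lies close to $\partial K$, conditionally on $\{\tau_x > n/2\}$ the walk typically moves to a point $y \in K_{n/2,\epsilon}$ at distance of order $\sqrt{n}$ from $\partial K$, and from such a $y$ the second-half increments $\tilde{S}(k) := S(n/2+k) - S(n/2)$ satisfy the hypotheses of Lemma \ref{distribution_large_range}.

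Writing $\tilde{\tau}_y$ for the exit time of $\tilde{S}$ from $K$ starting at $y$, I would expand
\begin{equation*}
\pr(|S(n)|\geq t, \tau_x \geq n) = \sum_{y \in K \cap \Lambda} \pr(x + S(n/2) = y, \tau_x > n/2)\, \pr\bigl(|\tilde{S}(n/2) + (y-x)| \geq t,\ \tilde{\tau}_y > n/2\bigr)
\end{equation*}
and split according to whether $|y - x| = |S(n/2)| \leq t/2$ or not. On $\{|y-x| \leq t/2\}$ the event $|\tilde{S}(n/2) + (y-x)| \geq t$ forces $|\tilde{S}(n/2)| \geq t/2$; further restricting to $y \in K_{n/2,\epsilon}$ and applying Lemma \ref{distribution_large_range} to the second factor yields $C\bigl(u(y/\sqrt{n/2})\, e^{-t^2/(cn)} + n^{-r}\bigr)$. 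The $p$-homogeneity of $u$, summation against $\pr(x + S(n/2) = y, \tau_x > n/2)$, the Denisov--Wachtel survival bound $\pr(\tau_x > n) \leq C V(x) n^{-p/2}$, and the uniform estimate $\mathbf{E}[u(x+S(m)); \tau_x > m] \leq C V(x)$ (built into the construction \eqref{definition_V(x)} of $V$) together produce a contribution bounded by $C V(x) n^{-p/2} e^{-t^2/(cn)} + C n^{-r}$.

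Two residual pieces remain. The near-boundary contribution $y \in K \setminus K_{n/2,\epsilon}$ is handled by the method of Lemma \ref{distribution_large_range}: the shift $y \mapsto y^+ := y + R x_0 (n/2)^{1/2-\gamma}$ together with the heat-kernel bounds \eqref{estimateK}--\eqref{estimatek} and the Zaitsev--G\"otze coupling \eqref{couplingEinmahl} show this piece is $O(n^{-r})$. The large-displacement residual $\pr(|S(n/2)| > t/2, \tau_x > n/2)$ coming from $\{|y-x|>t/2\}$ is dominated by $C V(x) n^{-p/2}$ when $t \leq A\sqrt{n}$, which is absorbed into the main estimate since $e^{-t^2/(cn)}$ is then bounded below; for $A\sqrt{n} \leq t \leq n^{1/2+\kappa}$, I would iterate the same Markov decomposition. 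The main technical obstacle is to ensure that this iteration does not deteriorate the form of the bound, which will require a careful choice of $\gamma$ and $\kappa$ in relation to the exponent $r$ from \eqref{eq:def_local_r}, and uses that $O(\log(t^2/n))$ iterations only affect the bound by harmless multiplicative constants.
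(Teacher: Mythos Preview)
Your decomposition at the fixed time $n/2$ differs from the paper's, and the iteration you propose for the large-displacement residual does not close. The paper stops instead at the random time $t_{x,\epsilon}(n):=\inf\{m\ge1:x+S(m)\in K_{n,\epsilon}\}$, which by \cite[Lem.~14]{DeWa-15} satisfies $t_{x,\epsilon}(n)\le n^{1-\epsilon}$ outside an event of probability $O(e^{-Cn^{\epsilon'}})$. From $x_\epsilon(n):=x+S(t_{x,\epsilon}(n))\in K_{n,\epsilon}$ one applies Lemma~\ref{distribution_large_range} over a time window of size $n-t_{x,\epsilon}(n)\sim n$ and a threshold still $\sim t$ (the displacement $|S(t_{x,\epsilon}(n))|$ being controlled by Lemma~\ref{Lemma_24_revisited}); the resulting factor $\e[u(x_\epsilon(n));\tau_x>t_{x,\epsilon}(n),\,t_{x,\epsilon}(n)\le n^{1-\epsilon}]$ converges to $V(x)$ by \cite[Lem.~21]{DeWa-15}. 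No iteration is needed, and the exponent $t^2/(cn)$ is preserved.

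In your scheme, each halving sends $(n,t)\mapsto(n/2,t/2)$, and since $(t/2)^2/(n/2)=t^2/(2n)$ the Gaussian rate in the main term is halved at every step. After $k$ iterations the exponent is $t^2/(2^k cn)$; you stop when $t/2^k\sim A\sqrt{n/2^k}$, i.e.\ $2^k\sim t^2/(A^2n)$, at which point this exponent is $O(1)$ and the Gaussian decay has vanished. The terminal residual is then bounded only by $\pr(\tau_x>n/2^k)\le CV(x)(n/2^k)^{-p/2}\sim CV(x)n^{-p/2}(t^2/n)^{p/2}$, which for $t=n^{1/2+\kappa}$ exceeds both $V(x)n^{-p/2}e^{-t^2/(cn)}$ and $n^{-r}$. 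This is a genuine loss in the exponent, not a multiplicative constant, so the ``$O(\log(t^2/n))$ iterations only affect the bound by harmless multiplicative constants'' claim is where the argument breaks. A secondary gap is the near-boundary piece $y\in K\setminus K_{n/2,\epsilon}$: the shift $y\mapsto y^+$ and the H\"older estimate in the proof of Lemma~\ref{distribution_large_range} are precisely what \emph{require} $y\in K_{n,\epsilon}$ (so that $u(y/\sqrt{n})\ge n^{-p\epsilon}$ dominates the error $n^{\chi\kappa-\alpha\gamma}$), and you have not argued that $\pr(x+S(n/2)\in K\setminus K_{n/2,\epsilon},\tau_x>n/2)$ is $O(n^{-r})$ rather than just a small power of $n$. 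The paper's entry-time decomposition sidesteps both issues simultaneously.
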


\begin{proof}
Introduce the stopping time
\begin{equation}
\label{eq:t_x}
     t_{x,\epsilon}(n)=\inf\{n\geq 1:x+S(n)\in {K}_{n,\epsilon}\}
\end{equation}
and $x_{\epsilon}(n)=x+S(t_{x,\epsilon}(n))$. Then, applying \cite[Sec.~4]{DeWa-15} to Lemma \ref{distribution_large_range}, we get
\begin{align*}
\pr(\vert S(n)\vert \geq t,& \tau_{x}\geq n)\leq C\Bigl(n^{-p/2}\exp(-t^{2}/(cn))\times\\
&\hspace{3cm}\e\left(u(x_{\epsilon}(n)),\tau_{x}>t_{x,\epsilon}(n),t_{x,\epsilon}(n)\leq n^{1-\epsilon}\right)+ n^{-r}\Bigr)\\
&+n^{-p/2}O\left(\e\left(\vert x_{\epsilon}(n)\vert^{p},\vert x_{\epsilon}(n)\vert> \theta_{n}\sqrt{n},\tau_{x}>t_{x,\epsilon}(n),t_{x,\epsilon}(n)\leq n^{1-\epsilon}\right)\right)\\
&+O(\exp(-Cn^{\epsilon'}),
\end{align*}
where $\theta_{n}=n^{-\epsilon/8}$ and $\epsilon'$ is small enough. Using Lemma \ref{Lemma_24_revisited} with $\alpha=p$ and $q=r(p)$ gives
\begin{equation*}n^{-p/2} \e\left(\vert x_{\epsilon}(n)\vert^{p},\vert x_{\epsilon}(n)\vert> \theta_{n}\sqrt{n},\tau_{x}>t_{x,\epsilon}(n),t_{x,\epsilon}(n)\leq n^{1-\epsilon}\right)=o(n^{-(p+d-2+(2-p)^{+})/2}),\end{equation*}
since $p+d-2+(2-p)^{+}<r(p)$. Since $(p+d-2+(2-p)^{+})/2\geq r$, see \eqref{eq:def_local_r}, Lemma~\ref{distribution_large_range} yields
\begin{equation*} n^{-p/2}\e\left(\vert x_{\epsilon}(n)\vert^{p},\vert x_{\epsilon}(n)\vert> \theta_{n}\sqrt{n},\tau_{x}>t_{x,\epsilon}(n),t_{x,\epsilon}(n)\leq n^{1-\epsilon}\right)=o(n^{-r})\end{equation*}
for $t\leq n^{1/2+\kappa}$. Since, by \cite[Lem.~21]{DeWa-15},
\begin{equation*}\lim_{n\rightarrow \infty}\e\left(u(x_{\epsilon}(n)),\tau_{x}>t_{x,\epsilon}(n),t_{x,\epsilon}(n)\leq n^{1-\epsilon}\right)=V(x),\end{equation*}
the result is deduced.
\end{proof}
For the next lemma, we need some bounds from \cite[Lem.~27 and 29]{DeWa-15}. There exist positive constants $a$ and $C$ such that for all $u\geq 0$,
\begin{equation}\label{bound_local_probability}
\limsup_{n\rightarrow \infty} n^{d/2}\sup_{\vert z-x\vert \geq u\sqrt{n}}\pr(x+S(n)=z)\leq C\exp(-au^{2}).
\end{equation}
In particular, there exists $C(x)>0$ such that 
\begin{equation}\label{bound_local_probability_in_cone}
\sup_{y\in{K}}\pr(x+S(n)=y,\tau_{x}\geq n)\leq C(x)n^{-p/2-d/2}.
\end{equation}
\begin{lemma}\label{roughBoundLocal}
There exist $C$ and $n_{0}$ such that for $n\geq n_{0}$, all $y\in {K}_{n,\epsilon}$ with $\vert y\vert\leq n^{1/2+\kappa}$ and all $z\in {K}$,
\begin{equation*}\pr(y+S(n)=z,\tau_{y}>n)\leq Cn^{-d/2}u(y/\sqrt{n}).\end{equation*}
\end{lemma}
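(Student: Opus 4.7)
The plan is to decouple the cone constraint from the endpoint constraint by splitting the time interval at the midpoint. Set $m=\lfloor n/2\rfloor$. By the Markov property at time $m$,
\begin{equation*}
\pr(y+S(n)=z,\tau_{y}>n)
=\sum_{w\in K\cap\Lambda}\pr(y+S(m)=w,\tau_{y}>m)\,\pr(w+S(n-m)=z,\tau_{w}>n-m).
\end{equation*}
For the second factor, I would drop the cone constraint and use the uniform local limit bound \eqref{bound_local_probability} with $u=0$ (equivalently, the standard local CLT for $S$), which gives $\pr(w+S(n-m)=z,\tau_{w}>n-m)\leq\pr(S(n-m)=z-w)\leq C(n-m)^{-d/2}\leq C'n^{-d/2}$, uniformly in $w$ and $z$. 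Summing over $w$ collapses the first factor to $\pr(\tau_{y}>m)$, yielding
\begin{equation*}
\pr(y+S(n)=z,\tau_{y}>n)\leq C' n^{-d/2}\,\pr(\tau_{y}>m).
\end{equation*}

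For the survival probability at time $m$, I would invoke the second inequality of Lemma~\ref{distribution_large_range}. The hypotheses $y\in K_{n,\epsilon}$ and $|y|\leq n^{1/2+\kappa}$ translate to $\dist(y,\partial K)\geq n^{1-\epsilon}\geq m^{1-\epsilon}$ and $|y|\leq n^{1/2+\kappa}\leq 2^{1/2+\kappa}m^{1/2+\kappa}$; after shrinking $\kappa$ by an arbitrarily small amount (which is harmless since Lemma~\ref{distribution_large_range} produces some $\kappa>0$), one sees that $y\in K_{m,\epsilon}$ with $|y|\leq m^{1/2+\kappa}$ for all $n$ large enough, so Lemma~\ref{distribution_large_range} applies and gives $\pr(\tau_{y}>m)\leq C\,u(y/\sqrt{m})$. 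Finally, the homogeneity $u(tx)=t^{p}u(x)$ yields $u(y/\sqrt{m})=(n/m)^{p/2}u(y/\sqrt{n})\leq 2^{p/2}u(y/\sqrt{n})$, and combining the estimates produces the claimed bound with a suitable constant $C$.

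The only step that requires genuine input is the bound on $\pr(\tau_{y}>m)$, which rests on the Brownian heat kernel estimate packaged into Lemma~\ref{distribution_large_range}; everything else is either the Markov property, the classical local CLT for unrestricted walks (used to handle all $z\in K$ uniformly), or the scaling of the réduite. I do not anticipate a serious obstacle: the rescaling of the constraints from $n$ to $m=\lfloor n/2\rfloor$ is an elementary constant adjustment, and the uniform bound on $\pr(S(n-m)=z-w)$ is the standard local limit theorem for $S$ under the finite-variance assumptions already in force via \ref{H:cov_id} and \ref{H:RW_aperiodic}.
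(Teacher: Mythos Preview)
Your proposal is correct and follows essentially the same approach as the paper: split at $m=\lfloor n/2\rfloor$, bound the second factor by the unrestricted local limit theorem \eqref{bound_local_probability} with $u=0$, sum to get $C n^{-d/2}\pr(\tau_y>m)$, and then invoke the second inequality of Lemma~\ref{distribution_large_range}. Your treatment of the passage from $n$ to $m$ (checking $y\in K_{m,\epsilon}$, $|y|\le m^{1/2+\kappa}$ up to constants, and using the homogeneity $u(y/\sqrt{m})=(n/m)^{p/2}u(y/\sqrt{n})$) is in fact more explicit than the paper's own write-up, which glosses over these constant adjustments.
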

\begin{proof}
Let $m:=\lfloor n/2\rfloor$. Then
\begin{align*}
\pr(y+S(n)=z,\tau_{y}>n)&=\sum_{z'\in{K}}\pr(y+S(m)=z',\tau_{y}>m)\pr(z'+S(n-m)=z,\tau_{z'}>n-m)\\
&\leq C\pr(\tau_{y}>m)m^{-d/2},
\end{align*}
where we have used \eqref{bound_local_probability} with $u=0$ to bound $\pr(z'+S(m)=z,\tau_{z'}>n-m)$. Thus, by Lemma \ref{distribution_large_range}, there exists $n_{0}$ such that for $n\geq n_{0}$ and $y\in{K}_{n,\epsilon}$ with $\vert y\vert \leq n^{1/2+\kappa}$,
\begin{equation*}
     \pr(y+S(n)=z,\tau_{y}>n)\leq Cn^{-d/2}u(y/\sqrt{n}).\qedhere
\end{equation*}
\end{proof}
Putting the previous results together yields the following estimate on the local probability at middle range.
\begin{proposition}
\label{bound_local_far_away}
Let $x\in{K}$. There exists $C$ such that 
\begin{equation*}
     \pr(x+S(n)=y,\tau_{x}>n)\leq CV(x)n^{-p/2-d/2}\left(u(y)n^{-p/2}\exp(-\vert x-y\vert^{2}/(c n))+ n^{-r}\right)
\end{equation*}
for all $y\in{K}_{n,\epsilon}$ such that $\vert y-x\vert\leq n^{1/2+\kappa}$.
\end{proposition}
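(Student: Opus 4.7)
The plan is to invoke the Markov property at the midpoint $m=\lfloor n/2\rfloor$,
$$
\pr(x+S(n)=y,\tau_x>n)=\sum_{z\in K}\pr(x+S(m)=z,\tau_x>m)\,\pr(z+S(n-m)=y,\tau_z>n-m),
$$
and to decompose the sum over $z$ via the triangle inequality $|z-x|+|z-y|\geq t:=|x-y|$: every intermediate point falls in Case A ($|z-x|\geq t/2$) or Case B ($|z-y|\geq t/2$), so it suffices to bound the two resulting subsums by the right-hand side of the proposition.

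In Case A, I would bound the second factor uniformly in $z$. Since $y\in K_{n,\epsilon}$ with $|y|\leq n^{1/2+\kappa}$, time-reversal identifies it with $\pr(y+S'(n-m)=z,\tau'_y>n-m)$, to which Lemma \ref{roughBoundLocal} (applied to the reverse walk, which has the same réduite $u$) yields the uniform bound $Cn^{-d/2-p/2}u(y)$. Summing the first factor over $\{|z-x|\geq t/2\}$ collapses to $\pr(|S(m)|\geq t/2,\tau_x>m)$, which Lemma \ref{boundAwayFixedX} controls by $C(V(x)n^{-p/2}e^{-t^2/(cn)}+n^{-r})$. The product of these two estimates reproduces the desired form.

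In Case B, I would proceed symmetrically, time-reversing the second factor so that the sum over $\{|z-y|\geq t/2\}$ becomes $\pr(|S'(n-m)|\geq t/2,\tau'_y>n-m)$, bounded through the reverse-walk version of Lemma \ref{boundAwayFixedX} by $C(u(y)n^{-p/2}e^{-t^2/(cn)}+n^{-r})$, after using the standard inequality $V'(y)\leq Cu(y)$. What remains is a uniform-in-$z$ bound of the first factor by $CV(x)m^{-p/2-d/2}$, and this is the main obstacle: since $x$ is fixed and is generally not deep inside the cone, Lemma \ref{roughBoundLocal} cannot be applied directly from $x$. The remedy is the entry-time trick already used in the proof of Lemma \ref{boundAwayFixedX}: split the walk at the stopping time $t_{x,\epsilon}(m)$ of first entry into $K_{m,\epsilon}$, apply Lemma \ref{roughBoundLocal} from the entry point $x_\epsilon(m):=x+S(t_{x,\epsilon}(m))$, and invoke the convergence $\mathbf{E}[u(x_\epsilon(m));\tau_x>t_{x,\epsilon}(m),\,t_{x,\epsilon}(m)\leq m^{1-\epsilon}]\to V(x)$ from \cite[Lem.~21]{DeWa-15}; the complementary event $\{t_{x,\epsilon}(m)>m^{1-\epsilon}\}$ contributes only a stretched-exponentially small term via the tail estimate for $t_{x,\epsilon}(m)$. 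Summing Cases A and B, and absorbing the residual $u(y)$-weighted error into the stated $n^{-r}$ error term (which is legitimate thanks to the assumption $|y-x|\leq n^{1/2+\kappa}$ with $\kappa$ taken sufficiently small relative to $r$), completes the proof.
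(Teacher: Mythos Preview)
Your decomposition (Markov property at $m=\lfloor n/2\rfloor$, then split according to $\vert z-x\vert\ge t/2$ versus $\vert z-y\vert\ge t/2$) and your treatment of Case~A are exactly what the paper does.

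In Case~B you make the argument harder than it needs to be, in two places. First, the uniform bound $\sup_{z\in K}\pr(x+S(m)=z,\tau_x>m)\le CV(x)m^{-p/2-d/2}$ is already available as \eqref{bound_local_probability_in_cone} (it is \cite[Lem.~29]{DeWa-15}), so there is no need to re-derive it via the entry-time trick. Second, for the tail $\pr(\vert S'(n-m)\vert\ge t/2,\tau'_y>n-m)$ you invoke the reverse-walk version of Lemma~\ref{boundAwayFixedX} and then convert $V'(y)$ into $u(y)$. This is a detour with a genuine uniformity concern: Lemma~\ref{boundAwayFixedX} is stated for a \emph{fixed} starting point, and its proof uses the convergence $\e[u(x_\epsilon(n));\ldots]\to V(x)$, which is not asserted uniformly over a growing family of starting points. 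Since $y$ ranges over all of $K_{n,\epsilon}$, you would need to justify that the implied constant does not blow up. The paper avoids this entirely by applying Lemma~\ref{distribution_large_range} (not Lemma~\ref{boundAwayFixedX}) to the reverse walk started at $y$: that lemma is precisely designed for starting points in $K_{n,\epsilon}$ with $\vert y\vert\le n^{1/2+\kappa}$, gives the bound directly in terms of $u(y/\sqrt{n})$ with a uniform constant, and requires no conversion from $V'$ to $u$. With these two simplifications, Case~B is two lines, and the residual absorption step you mention at the end becomes unnecessary (one simply uses $u(y)\ge 1$ for $y\in K_{n,\epsilon}$ and $n$ large).
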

\begin{proof}
Let $m:=\lfloor n/2\rfloor$. Then we have
\begin{align*}
\pr&(x+S(n)=y,\tau_{x}>n)\\
&=\sum_{z\in{K}:\,\vert z-x\vert \geq \vert y-x\vert/2}\pr(x+S(m)=z,\tau_{x}>m)\pr(y+S'(n-m)=y,\tau'_{y}>n-m)\\
&+\sum_{z\in{K}:\,\vert z-x\vert < \vert y-x\vert/2}\pr(x+S(m)=z,\tau_{x}>m)\pr(y+S'(n-m)=y,\tau'_{y}>n-m)\\
&=M_{1}+M_{2}.
\end{align*}
By Lemma \ref{boundAwayFixedX} and Lemma \ref{roughBoundLocal}, the first sum is bounded from above by
\begin{align*}
M_{1}&\leq Cu(y/\sqrt{n})n^{-d/2}\pr(\vert S(n)\vert>\vert x-y\vert/2,\tau_{x}>n)\\
&\leq  CV(x)u(y/\sqrt{n})n^{-d/2}\left(n^{-p/2}\exp(-\vert y-x\vert^{2}/(c n))+ n^{-r}\right),
\end{align*}
where we have used in the last inequality the hypothesis $\vert y-x\vert/2\leq n^{1/2+\kappa}$ in order to apply Lemma \ref{boundAwayFixedX}. Similarly, by \eqref{bound_local_probability_in_cone} and Lemma \ref{distribution_large_range}, the second sum is bounded by 
\begin{align*}
M_{2}&\leq CV(x)n^{-d/2-p/2}\pr(\vert S'(m)\vert>\vert x-y\vert/2,\tau'_{y}>n)\\
&\leq  CV(x)n^{-d/2-p/2}\left(u(y/\sqrt{n})\exp(-\vert y-x\vert^{2}/(c n))+ n^{-r}\right)\\
&\leq CV(x)u(y)n^{-d/2-p/2}\left(n^{-p/2}\exp(-\vert y-x\vert^{2}/(c n))+ n^{-r}\right),
\end{align*}
where we have used in the last inequality the hypothesis that $\vert y-x\vert/2\leq n^{1/2+\kappa}$ in order to apply Lemma \ref{distribution_large_range}, as well as the fact that $u(y)\geq 1$ for $y\in{K}_{n,\epsilon}$ and $n$ large enough. The result is then deduced by summing the bounds of $M_{1}$ and $M_{2}$.
\end{proof}


\subsubsection*{Asymptotics of $S_1(x,y,\varepsilon)$ in case \ref{itB}} 
We now prove Theorem \ref{thm:deep} under the hypothesis \ref{itB}. Without loss of generality, we assume that $d\geq 2$. We have to show that \eqref{lim_S_1} holds for $y$ satisfying $\dist(y,\partial K)\ge \vert y\vert^{1-\rho}$. Our strategy is to decompose $S_{1}(x,y,\varepsilon)$ as a sum of three terms:
\begin{equation}
\label{eq:3_term_dec_S1}
     S_{1}(x,y,\varepsilon)=\Sigma_{1}+\Sigma_{2}+\Sigma_{3}=\left(\sum_{n=0}^{N_{1}}+\sum_{n=N_{1}+1}^{N_{2}}+\sum_{n=N_{2}}^{\epsilon\vert x-y\vert^{2}}\right)\pr(x+S(n)=y,\tau_{x}>n),
\end{equation}
with $N_{1}$ of the form $\vert y-x\vert^{2-\nu}$ and $N_2$ to be defined later. We begin by giving an estimate of the truncated Green function $\Sigma_{1}$.

\begin{proposition}
\label{Green_very_large}
Let $\nu>0$ and suppose that $\e\vert X\vert^{r+(2-p)^{+}}<\infty$. Then, for all $a<r$,
\begin{equation*}
     \sum_{n=0}^{\vert y-x\vert^{2-\nu}}\pr(x+S(n)=y,\tau_{x}>n)=o(\vert x-y\vert^{a}).
\end{equation*}
\end{proposition}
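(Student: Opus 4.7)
The idea is that in the range $n \leq N_{1}=\vert y-x\vert^{2-\nu}$ the unrestricted walk must make a displacement of order $\vert y-x\vert$ within a time window much shorter than the diffusive scale $\vert y-x\vert^{2}$. This is a large deviation event whose probability is controlled by the moment assumption $\e\vert X\vert^{r+(2-p)^{+}}<\infty$. The strategy is therefore to drop the cone-killing entirely and apply a Fuk--Nagaev style inequality, which should be enough because the target decay rate is polynomial.

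Concretely, using that $\{x+S(n)=y\}\subset\{\vert S(n)\vert\ge \vert y-x\vert\}$,
\[
\Sigma_{1} \leq \sum_{n=0}^{N_{1}} \pr(\vert S(n)\vert \ge \vert y-x\vert).
\]
A Fuk--Nagaev type bound gives
\[
\pr(\vert S(n)\vert \ge t) \leq C\frac{n}{t^{r+(2-p)^{+}}} + \exp\Bigl(-c\frac{t^{2}}{n}\Bigr),
\]
the first term reflecting a single big jump of size $\sim t$ and the second coming from sub-Gaussian concentration of the walk truncated at level $t$. Plugging in $t=\vert y-x\vert$ and $n\leq N_{1}$, the exponent satisfies $t^{2}/n \geq \vert y-x\vert^{\nu}$, so the Gaussian contribution is super-polynomially small in $\vert y-x\vert$ and sums to something negligible.

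Summing the polynomial part yields
\[
\sum_{n=1}^{N_{1}} \frac{n}{\vert y-x\vert^{r+(2-p)^{+}}} \leq \frac{N_{1}^{2}}{\vert y-x\vert^{r+(2-p)^{+}}} = \vert y-x\vert^{4-2\nu-r-(2-p)^{+}}.
\]
Combined with the super-polynomially small Gaussian piece, this gives the desired conclusion $\Sigma_{1}=o(\vert y-x\vert^{a})$ for every $a<r$ once $\nu$ is taken small enough (the extra $(2-p)^{+}$ moments are precisely what absorbs the $4-2\nu$ coming from summing the linear factor in $n$).

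The main obstacle lies in obtaining the linear-in-$n$ dependence in the numerator of the Fuk--Nagaev bound: a direct application of Chebyshev using Rosenthal's $r$-moment inequality would produce a term of size $n^{r/2}/t^{r}$, whose summation over $n\leq N_{1}$ is far too crude. One really needs the single-big-jump decomposition, isolating one increment of magnitude $\sim t$ and controlling the remaining $n-1$ truncated increments by Bennett/Bernstein concentration, which is exactly what the Fuk--Nagaev inequality encodes.
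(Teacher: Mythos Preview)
Your approach has a genuine arithmetic gap. By replacing $\{x+S(n)=y,\tau_x>n\}$ with $\{\vert S(n)\vert\ge \vert y-x\vert\}$ you discard \emph{both} the local constraint (hitting the single lattice point $y$) and the cone constraint. After summing your polynomial Fuk--Nagaev term over $n\le N_1=\vert y-x\vert^{2-\nu}$ you obtain
\[
\Sigma_1 \le C\,\vert y-x\vert^{\,4-2\nu-r-(2-p)^+} + \text{(negligible)},
\]
which is $o(\vert y-x\vert^{-a})$ only for $a<r+(2-p)^{+}-4+2\nu$. Since $p\ge 1$ for a convex cone one has $(2-p)^{+}\le 1$, so for the small values of $\nu$ that matter this gives at best $a<r-3+2\nu$, far short of $a<r$. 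Your sentence that ``the extra $(2-p)^{+}$ moments are precisely what absorbs the $4-2\nu$'' is not correct: $(2-p)^{+}\le 1$ cannot absorb a term close to $4$. (Also, taking $\nu$ small makes your bound \emph{worse}, not better.)

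The paper keeps both constraints. On the big-jump event $\{\mu=k\le n\}$, where $\mu$ is the first time an increment exceeds $\vert y-x\vert^{1-\nu/\alpha}$, it factors the probability using the Markov property and bounds the three pieces separately: $\pr(\tau_x>k-1)\le C V(x)k^{-p/2}$ from the cone survival estimate, the big-jump probability by Markov's inequality, and $\sup_z\pr(S(n-k)=z)\le C(n+1-k)^{-d/2}$ from the unconstrained local limit theorem. This produces the kernel $\sum_{k=1}^{n}k^{-p/2}(n+1-k)^{-d/2}$; summing over $n\le N_1$ and using $d\ge 2$ gives a factor of order $N_1^{(1-p/2)^{+}}$ (up to logarithms), not your $N_1^{2}$. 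The additional $(2-p)^{+}$ in the moment hypothesis is calibrated to cancel exactly this $N_1^{(1-p/2)^{+}}\approx \vert y-x\vert^{(2-p)^{+}}$, which is why the final exponent lands at $-r$. Retaining at least the local constraint (worth $n^{-d/2}$) is essential; the cone constraint supplies the remaining $n^{-p/2}$.
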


\begin{proof}
Following the proof of \cite[Lem.~24]{DeWa-15}, we introduce the stopping time 
\begin{equation*}
     \mu=\inf\{i\geq 1: \vert X(i)\vert \geq \vert y-x\vert^{1-\nu/\alpha}\},
\end{equation*}
where $\alpha$ is large enough and will be chosen later. Let $n\leq \vert y-x\vert^{2-\nu}$. Then
\begin{multline*}
     \pr(x+S(n)=y,\tau_{x}>n)\\=\pr(x+S(n)=y,\tau_{x}>n,\mu>n)+\pr(x+S(n)=y,\tau_{x}>n,\mu\leq n).
\end{multline*}
On the one hand, using Fuk-Nagaev inequalities \cite{FuNa-71} as in \cite[Cor.~23]{DeWa-15} yields 
\begin{align*}
\pr(x+S(n)=y,\tau_{x}>n,\mu>n)&\leq \pr(\vert S(n)\vert \geq \vert x-y\vert/2,\,\sup_{k\leq n}\vert X(k)\vert \leq \vert y-x\vert^{1-\nu/\alpha})\\
&\leq \left(\frac{n\sqrt{d}e}{\vert x-y\vert^{2-\nu/\alpha}/2}\right)^{\vert x-y\vert^{\nu/\alpha}/(2\sqrt{d})}\\
&\leq \left(\frac{\vert x-y\vert^{2-\nu}\sqrt{d}e}{\vert x-y\vert^{2-\nu/\alpha}/2}\right)^{2\vert x-y\vert^{\nu/\alpha}/(2\sqrt{d})}\\&\leq\exp(-C\vert x-y\vert^{\nu/\alpha})
\end{align*}
for $y$ large enough. On the other hand, recall that since $X$ admits moments of order $r(p):=r+(2-p)^{+}$,
\begin{align*}
\pr(x+S(n)=y&,\tau_{x}>n,\mu\leq n)\\&\leq\sum_{k=1}^{n}\pr(\tau_{x}>k-1,\vert X(k)\vert \geq \vert y-x\vert^{1-\nu/\alpha},
y+S'(n-k)=x+S(k))\\
&\leq CV(x)\frac{\e[\vert X\vert^{r(p)}]}{\vert y-x\vert^{(1-\nu/\alpha)r(p)}}\sum_{k=1}^{n}k^{-p/2}(n+1-k)^{-d/2},
\end{align*}
where we have used the Markov property of the random walk, applied \eqref{bound_local_probability} with $u=0$ to $S'(n-k)$ and then \eqref{bound_local_probability_in_cone} in the last inequality. Hence, we get
\begin{align*}
\sum_{n=0}^{\vert y-x\vert^{2-\nu}}\pr(x+S(n)&=y,\tau_{x}>n)\leq\vert y-x\vert^{2-\nu}\exp(-C\vert x-y\vert^{\nu/\alpha})\\
&\ \ +CV(x)\frac{\e[\vert X\vert^{r(p)}]}{\vert y-x\vert^{(1-\nu/\alpha)r(p)}}\sum_{n=1}^{\vert y-x\vert^{2-\nu}}\sum_{k=1}^{n}k^{-p/2}(n+1-k)^{-d/2}\\
&\phantom{=y,\tau_{x}>n)\,\,}\leq C'\vert y-x\vert^{-(1-\nu/\alpha)r(p)}\sum_{k=1}^{\vert y-x\vert^{2-\nu}}k^{-p/2}\sum_{k=1}^{\vert y-x\vert^{2-\nu}}k^{-d/2}.
\end{align*}
Since $d\geq 2$, we have the elementary estimate, for some constant $C>0$,
\begin{equation*}
     \sum_{k=1}^{\vert y-x\vert^{2-\nu}}k^{-p/2}\sum_{k=1}^{\vert y-x\vert^{2-\nu}}k^{-d/2}\sim C\log\vert y-x\vert^{\mathbf{1}_{d=2}+\mathbf{1}_{p=2}}\left(\vert y-x\vert^{2-\nu}\right)^{(1-p/2)\wedge 0}.
\end{equation*}
Hence,
\begin{multline*}
     \sum_{n=0}^{\vert y-x\vert^{2-\nu}}\pr(x+S(n)=y,\tau_{x}>n)\\\leq C\vert y-x\vert^{-(1-\nu/\alpha)r(p)+(2-\nu)((1-p/2)\wedge 0)}\log\vert y-x\vert^{\mathbf{1}_{d=2}+\mathbf{1}_{p=2}}.
\end{multline*}
Finally, since $r(p)=r+(2-p)^{+}$, for $a<r$ and $\alpha$ large enough, we conclude the proof of Proposition \ref{Green_very_large}.
\end{proof}

We now conclude the proof of Theorem \ref{thm:deep} \ref{itB}.
\begin{lemma}
Suppose that $\e[\vert x\vert^{r(p)}]<\infty$ with $r(p)>p+d-2+(2-p)^{+}$. Then,
\begin{equation*}
     \lim_{\varepsilon\to0}\limsup_{\substack{\vert y\vert\to\infty\\ \dist(y,\partial K)>\vert y\vert^{1-\rho}}}\frac{\vert y\vert^{2p+d-2}}{u(y)}S_1(x,y,\varepsilon)=0.
\end{equation*}
\end{lemma}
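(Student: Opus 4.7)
The plan is to use the three-term decomposition \eqref{eq:3_term_dec_S1} with $N_{1}=|y-x|^{2-\nu}$ for some small $\nu>0$ and $N_{2}=\eta|y|^{2}$ for some small constant $\eta>0$, estimating each piece separately. For $\Sigma_{1}$, I would invoke Proposition~\ref{Green_very_large}: the hypothesis $r(p)>p+d-2+(2-p)^{+}$ furnishes a decay exponent strictly larger than $p+d-2$, so $\Sigma_{1}$ decays faster than $|y|^{-(p+d-2+\rho)}$ for $\rho$ sufficiently small. Combined with the lower bound $u(y)\geq c|y|^{p-\rho}$ from \cite[Lem.~19]{DeWa-15}, valid under $\dist(y,\partial K)\geq|y|^{1-\rho}$, this gives $\Sigma_{1}=o(u(y)|y|^{-(2p+d-2)})$.

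For $\Sigma_{2}$ the plan is to apply Proposition~\ref{bound_local_far_away} termwise. The moderate deviation condition $|y-x|\leq n^{1/2+\kappa}$ holds on $n\geq N_{1}$ once $\nu$ is small enough in terms of $\kappa$, and the geometric condition $y\in K_{n,\epsilon}$ holds on $n\leq N_{2}$ once $\rho$ is small enough relative to $\epsilon$ and $\eta$. The polynomial error $n^{-r}$ contributes $O(N_{1}^{1-p/2-d/2-r})$, which is $o(u(y)|y|^{-(2p+d-2)})$ by the slack $r>p+d-2$ and smallness of $\nu$. After the substitution $t=|y-x|^{2}/n$, the Gaussian contribution becomes $u(y)|y|^{-(2p+d-2)}\int_{1/\eta}^{\infty}t^{p+d/2-2}e^{-t/c_{0}}dt\cdot(1+o(1))$, and this integral can be made arbitrarily small by taking $\eta\to 0$.

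For $\Sigma_{3}$ I would use the sharp local limit theorem \cite[Prop.~9]{RaTa-18}, applied with $A=1/\sqrt{\eta}$ and a parameter $\epsilon_{1}$ close to $1/2$ (so that the range $n\leq\varepsilon|y|^{2}$ remains inside $K_{n,\epsilon_{1}}^{A}$ given $\dist(y,\partial K)\geq|y|^{1-\rho}$ and $\rho$ small). After summation and using $u(y/\sqrt{n})=u(y)n^{-p/2}$, the main contribution is $\varkappa H_{0}V(x)u(y)|y|^{-(2p+d-2)}\int_{\eta}^{\varepsilon}z^{-p-d/2}e^{-1/(2z)}dz$ plus lower order terms, and this integral vanishes as $\varepsilon\to 0$ (for $\varepsilon\leq\eta$, $\Sigma_{3}$ is simply empty). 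Combining and taking $|y|\to\infty$ then $\varepsilon\to 0$ yields the claim. The main obstacle is coordinating the parameters $\nu,\kappa,\eta,\epsilon,\epsilon_{1},\rho$ so that the hypotheses of Propositions~\ref{Green_very_large}, \ref{bound_local_far_away}, and \cite[Prop.~9]{RaTa-18} are simultaneously met across their respective $n$-ranges, which forces $\rho$ to be small enough to accommodate both the boundary-proximity conditions and the exponent slack used for $\Sigma_{1}$.
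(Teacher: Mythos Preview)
Your approach is correct and leads to the same conclusion, but it differs from the paper's in two places. First, the paper chooses $N_{2}$ implicitly as the crossover point where the Gaussian term $\exp(-\vert y-x\vert^{2}/(cn))$ matches the polynomial error $n^{-r+p\rho}$ from Proposition~\ref{bound_local_far_away}; you instead take $N_{2}=\eta\vert y\vert^{2}$ and introduce an auxiliary limit $\eta\to0$ at the end. Second, for $\Sigma_{3}$ the paper stays with Proposition~\ref{bound_local_far_away} and closes by the monotonicity of $t\mapsto t^{-p-d/2}e^{-\vert y-x\vert^{2}/(ct)}$ on $[0,\vert y-x\vert^{2}/(c(p+d/2))]$, whereas you re-run the sharp local limit theorem from \cite[Prop.~9]{RaTa-18} exactly as in the $S_{2}$ analysis. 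Your route is arguably cleaner, since with $\eta=\varepsilon$ the term $\Sigma_{3}$ disappears and the whole argument reduces to Propositions~\ref{Green_very_large} and~\ref{bound_local_far_away}; the paper's choice of $N_{2}$ avoids the extra limiting parameter but needs the monotonicity observation.

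One correction: your justification ``by the slack $r>p+d-2$'' for the polynomial error in $\Sigma_{2}$ is wrong as stated. The exponent $r$ in Proposition~\ref{bound_local_far_away} is the coupling exponent $r=r(p)(1/2-\gamma)-1$ from \eqref{eq:def_local_r}, which is only slightly above $p/2$, not above $p+d-2$. What you actually need is that
\[
(2-\nu)\Bigl(\tfrac{p}{2}+\tfrac{d}{2}+r-1\Bigr)>p+d-2+p\rho,
\]
and at $\nu=\rho=0$ this reduces to $2r>0$, so smallness of $\nu$ and $\rho$ suffices. Your conclusion stands, but the reason is the positivity of $r$, not a large gap.
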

\begin{proof}
Our starting point is the three-term decomposition \eqref{eq:3_term_dec_S1}. Since $\dist(y,\partial K)>\vert y\vert^{1-\rho}$, we have $u(y)\geq \vert y\vert^{p-p\rho}$ by \eqref{lowerBoundu}. Hence, it suffices to prove that
\begin{equation}
\label{eq:to_prove_1}
     \limsup_{\substack{\vert y\vert\to\infty\\ \dist(y,\partial K)>\vert y\vert^{1-\rho}}}\vert y\vert^{p+d-2+p\rho}(\Sigma_{1}+\Sigma_{2})=0
\end{equation}
and
\begin{equation}
\label{eq:to_prove_2}
     \lim_{\varepsilon\to0}\limsup_{\substack{\vert y\vert\to\infty\\ \dist(y,\partial K)>\vert y\vert^{1-\rho}}}\frac{\vert y\vert^{2p+d-2}}{u(y)}\Sigma_{3}=0.
\end{equation}

In order to prove \eqref{eq:to_prove_1}, we start by the following estimate, obtained in Proposition~\ref{Green_very_large}, for $\rho$ small enough:
\begin{equation*}
     \Sigma_{1}=o(\vert x-y\vert^{-p/2-d/2-p\rho}).
\end{equation*}
We now study $\Sigma_{2}$.
Let $\nu>0$ be such that $(2-\nu)(1/2+\kappa)> 1$, with $\kappa$ as in Lemma~\ref{distribution_large_range}. Suppose that $\delta<\nu$. With $c$ as in Lemma~\ref{distribution_large_range}, introduce
\begin{equation*}
     N_{2}=\inf\{n\geq 1: \exp\bigl(-\tfrac{\vert y-x\vert^{2}}{cn}\bigr)\geq n^{-r+p\rho}\}.
\end{equation*}
Recall that $r=r(p)(1/2-\gamma)-1$, see \eqref{eq:def_local_r}, and that $r>p/2$ for $d\geq 2$ and $\gamma$ small enough, so that $N_{2}$ exists as soon as $\rho$ is small enough. Furthermore, for $d\geq 2$ and $y$ large enough, $N_{2}\geq N_{1}$, since for $K$ large enough,
\begin{equation*}
     \exp\left(-\frac{\vert y-x\vert^{2}}{c\frac{\vert y-x\vert^{2}}{K\log \vert y-x\vert}}\right)=\vert y-x\vert^{-K/c}\leq\left(\frac{\vert y-x\vert^{2}}{K\log \vert x-y\vert}\right)^{-r+p/2}.
\end{equation*}
 

By our choice of $\nu$ and $\delta<\nu$, $\vert y-x\vert \leq n^{1/2+\kappa}$ for $n\geq \vert y-x\vert^{2-\delta}$ and $y$ large enough. Applying Proposition \ref{bound_local_far_away} to $\Sigma_{2}$ then yields
\begin{align*}
\Sigma_{2} & \leq CV(x)\epsilon\vert x-y\vert^{2}\left(\vert y-x\vert^{2-\nu}\right)^{-r-p/2-d/2}\\
& \leq C\varepsilon V(x)u(y)\vert y-x\vert^{-(2r+p+d-2)+f(\nu)}
\leq C\frac{V(x)}{A^{2}}\vert y-x\vert^{-(2r+p+d-2)+f(\nu)},
\end{align*}
where $f:{\bf R}\rightarrow{\bf R}$ is linear. Since $r>p\rho$ , choosing $\nu$ small enough yields 
\begin{equation*}\Sigma_{2}\leq C\varepsilon V(x)\vert y-x\vert^{-(r+p+d-2+u)},\end{equation*}
with $u=2r-p\rho>0$, for $y$ large enough. Hence \eqref{eq:to_prove_1} is proved.

We turn to the term $\Sigma_{3}$.
First, by the choice of $N_{2}$ and Proposition \ref{bound_local_far_away},
\begin{equation*}
     \Sigma_{3}\leq CV(x)u(y)\sum_{n=N_{2}+1}^{\varepsilon\lfloor  \vert y-x\vert^{2}\rfloor}n^{-p-d/2}\exp\left(-\frac{\vert y-x\vert^{2}}{cn}\right).
\end{equation*}
Set $g_{k,B}(t)=t^{-k}\exp(-B/t)$, with $B,k>0$. Then 
\begin{equation*}
     g_{k,B}'(t)=(Bt^{-k-2}-kt^{-k-1})\exp(-B/t),
\end{equation*}
and thus $g_{k,B}$ is increasing on $[0,B/k]$. Applying the latter property to $k=p+d/2$ and $B=\vert y-x\vert^{2}/c$ yields that if $\varepsilon^{-1}>c(p+d/2)$ (which we assume from now on), then
\begin{equation*}
     \Sigma_{3}\leq C\varepsilon\vert y\vert^{-2p-d+2}u(y)\exp(-\epsilon^{-1}/c).
\end{equation*}
This implies \eqref{eq:to_prove_2}, thereby completing the proof.
\end{proof}

\section{Boundary asymptotics of the Green function: the half-space case}
\label{sec:half-space}

In this section we shall consider a particular cone
\begin{equation*}
K=\bigl\{x\in{\bf R}^d:x_d>0\bigr\}.
\end{equation*}
Since the rotations of the space do not affect our moment assumptions, the results of this section
remain valid for any half-space in ${\bf R}^d$.
For this very particular cone, we have
\begin{itemize}
 \item $u(x)=x_d$;
 \item $\tau_x=\inf\{n\ge1:x_d+S_d(n)\le0\}$;
 \item $V(x)$ depends on $x_d$ only and is proportional to the renewal function of ladder heights of 
       the random walk $\{S_d(n)\}$.
\end{itemize}
In other words, the exit problem from $K$ is actually a one-dimensional problem. This allows is to use existing results for one-dimensional walks. 

The proof of Theorem \ref{thm:half-space} is based on the following simple generalization of known results for cones.
\begin{lemma}
\label{lem:half-space}
Assume that $\mathbf{E}\vert X\vert^{2+\delta}<\infty$. Then, uniformly in $x\in K$ with $x_d=o(\sqrt{n})$,
\begin{enumerate}[label=($\alph{*}$),ref=($\alph{*}$)]
     \item\label{lem:it-a}$\mathbf{P}(\tau_x>n)\sim \varkappa V(x)n^{-1/2}$;
     \item\label{lem:it-b}$(\frac{x+S([nt])}{\sqrt{n}})_{t\in[0,1]}$ conditioned on $\{\tau_x>n\}$ converges
             weakly to the Brownian meander in $K$;
     \item\label{lem:it-c}$\sup_{y\in K}\Big\vert n^{1/2+d/2}\mathbf{P}(x+S(n)=y;\tau_x>n)-cV(x)\frac{y_d}{\sqrt{n}}e^{-\vert y-x\vert^2/2n}\Big\vert\to0$.         
\end{enumerate}
\end{lemma}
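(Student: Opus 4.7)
The crucial observation is that for the half-space $K=\{x\in\mathbb{R}^d:x_d>0\}$, the exit time $\tau_x=\inf\{n\ge1:S_d(n)\le -x_d\}$ is measurable with respect to the trajectory of the last coordinate $\{S_d(n)\}$ alone. Under assumption (H2) this coordinate is a driftless, unit-variance one-dimensional random walk with $(2+\delta)$-moments. The plan is therefore to reduce each part of the lemma to a well-known one-dimensional statement about $\{S_d(n)\}$ conditioned to stay above $-x_d$, combined with a standard $(d-1)$-dimensional argument for the unconstrained coordinates.

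For part \ref{lem:it-a}, the asymptotic $\mathbf{P}(\tau_x>n)\sim \varkappa V(x)n^{-1/2}$ is the classical tail estimate of Kozlov for the first passage time below zero of a one-dimensional random walk with zero mean and finite variance, with $V(x)$ proportional to the renewal function of the strict descending ladder height process of $\{S_d(n)\}$. The uniformity in $x_d=o(\sqrt{n})$ is a standard refinement (Caravenna, or Vatutin--Wachtel) valid under the $(2+\delta)$-moment assumption.

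For part \ref{lem:it-b}, Bolthausen's invariance principle, in its uniform-in-starting-point version, gives convergence of the rescaled last coordinate, conditioned on survival, to a one-dimensional Brownian meander. For the first $d-1$ coordinates, Donsker's theorem provides joint unconditional convergence to a $(d-1)$-dimensional standard Brownian motion. Because under (H2) the limiting $d$-dimensional Brownian motion has independent coordinates, conditioning only on the positivity of its $d$-th coordinate leaves the others as standard Brownian motions independent of the meander. I would deduce the full joint convergence in Skorokhod's topology from convergence of finite-dimensional distributions (via the multivariate CLT on increments between a fixed grid of times) together with tightness of the conditioned process, which follows because $\mathbf{P}(\tau_x>n)^{-1}=O(\sqrt{n})$ by part \ref{lem:it-a}.

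Part \ref{lem:it-c}, the uniform local limit theorem, is the most delicate and is the expected main obstacle. The approach is to adapt the scheme of \cite{DeWa-15} to the half-space, using the one-dimensional structure of the conditioning event to relax the moment requirement from the general cone value $r_1(1)=d$ down to $2+\delta$. Concretely, I would condition on the trajectory $(X_d(k))_{k\le n}$, so that the conditional distribution of $S_\perp(n):=(S_1(n),\ldots,S_{d-1}(n))$ becomes a sum of independent (but not identically distributed) random vectors, and then combine a one-dimensional local limit theorem for $\{S_d(n)\}$ killed below $-x_d$ (Caravenna) with a conditional local limit theorem for $S_\perp(n)$ obtained through Esseen's smoothing inequality. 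Uniformity in $y\in K$ under only $(2+\delta)$-moments is secured by a truncation on the increment sizes analogous to the one used in Section \ref{sec:interior}, so that increments with $|X(k)|\gg \sqrt{n}$ contribute a negligible error, while the contribution of the remaining bounded increments assembles into the Gaussian factor $e^{-|y-x|^2/2n}$ with the expected polynomial prefactor $cV(x)y_d n^{-1/2-d/2}$.
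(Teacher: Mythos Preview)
Your reduction of part \ref{lem:it-a} to a one-dimensional first-passage estimate is exactly right and matches the paper. The discrepancy lies in parts \ref{lem:it-b} and \ref{lem:it-c}, where you both misidentify what needs to be proved and overcomplicate the argument.

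The key point you miss is that for the half-space the cone exponent is $p=1$, so the general results of \cite{DeWa-15} (Theorem~5, the local limit theorem) and \cite{DuWa-20} (the invariance principle) already apply under merely $\mathbf{E}\vert X\vert^{2+\delta}<\infty$. There is no need to ``relax the moment requirement from $r_1(1)=d$ down to $2+\delta$'': the quantity $r_1(p)$ governs the Green function asymptotics of Theorem~\ref{thm:deep}, not the local limit theorem, which requires only $p\vee(2+\delta)$ moments. Consequently the paper's proof for fixed $x$ is a one-line citation. The only genuinely new content in Lemma~\ref{lem:half-space} is the \emph{uniformity} in $x_d=o(\sqrt{n})$, and the paper handles this by replacing a single estimate (Lemma~24 of \cite{DeWa-15}) with a version uniform in $x_d\le\theta_n\sqrt{n}/2$, using part \ref{lem:it-a} to control $\sum_{j\le n^{1-\varepsilon}}\mathbf{P}(\tau_x>j-1)$ uniformly. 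Your proposal does not isolate this uniformity step at all.

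Your coordinate-separation scheme for \ref{lem:it-c} also has a real technical snag. Under \ref{H:cov_id} the increment coordinates are uncorrelated, not independent, so after conditioning on $(X_d(k))_{k\le n}$ the perpendicular increments $X_\perp(k)$ have conditional means $\mathbf{E}[X_\perp(k)\mid X_d(k)]$ that need not vanish and conditional covariances that need not equal $I_{d-1}$. To recover the Gaussian factor $e^{-\vert y-x\vert^2/2n}$ you would need these random quantities, summed over $k\le n$, to behave as $o(\sqrt{n})$ and $n(I_{d-1}+o(1))$ respectively, uniformly on the conditioning event $\{\tau_x>n\}$ of probability $\asymp n^{-1/2}$; this is not a routine application of Esseen's inequality. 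The paper sidesteps all of this by invoking the cone machinery directly.
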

\begin{proof}
The first statement is the well-known result for one-dimensional random walks, see \cite[Cor.~3]{D12}. The second and third 
statements for fixed starting points $x$ have been proved in \cite{DuWa-20} and in \cite{DeWa-15}, respectively.
To consider the case of growing $x_d$, one has to make only one change: Lemma 24 from \cite{DeWa-15} should
be replaced by the estimate
\begin{equation*}
\lim_{n\to\infty}\frac{1}{V(x)}
\mathbf{E}\left[\vert x+S(\nu_n)\vert;\tau_x>\nu_n,\vert x+S(\nu_n)\vert >\theta_n\sqrt{n},\nu_n\le n^{1-\varepsilon}\right]=0
\end{equation*}
uniformly in $x_d\le \theta_n\sqrt{n}/2.$ If $x_d\ge n^{1/2-\varepsilon}$ then $\nu_n=0$ and the 
expectation equals zero. If $x_d\le n^{1/2-\varepsilon}$ then one repeats the proof of  
\cite[Lem.~24]{DeWa-15} with $p$ replaced by $1$ and uses the part \ref{lem:it-a} of the lemma to obtain an estimate
for the sum $\sum_{j\le n^{1-\varepsilon}}\mathbf{P}(\tau_x>j-1)$ uniform in $x_d$. (In \cite{DeWa-15}, the Markov
inequality has been used, since one does not have the statement \ref{lem:it-a} in general cones.)
\end{proof}
\begin{lemma}
\label{lem:half-space-2}
Uniformly in $y$ with $y_d=o(\sqrt{n})$,
\begin{equation*}
     \mathbf{P}(x+S(n)=y,\tau_x>n)\sim c\frac{V(x)V'(y)}{n^{1+d/2}}e^{-\vert y\vert^2/2n}.
\end{equation*}
\end{lemma}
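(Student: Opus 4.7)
The plan is to split the random walk at the midpoint $m=\lfloor n/2\rfloor$, apply the local limit theorem of Lemma~\ref{lem:half-space}\ref{lem:it-c} to each half, and use time reversal to symmetrise the roles of $x$ and $y$. By the Markov property,
\begin{equation*}
\mathbf{P}(x+S(n)=y,\tau_x>n)=\sum_{z\in K}\mathbf{P}(x+S(m)=z,\tau_x>m)\,\mathbf{P}(z+S(n-m)=y,\tau_z>n-m).
\end{equation*}
Reversing time (since $X'(i)$ has the same law as $-X(n-m-i+1)$) rewrites the second factor as $\mathbf{P}(y+S'(n-m)=z,\tau'_y>n-m)$. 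Both $x_d$ and $y_d$ are $o(\sqrt n)$, hence $o(\sqrt m)$ and $o(\sqrt{n-m})$, so Lemma~\ref{lem:half-space}\ref{lem:it-c} applies to each factor and yields, uniformly in $z\in K$,
\begin{equation*}
\mathbf{P}(x+S(m)=z,\tau_x>m)=\frac{cV(x)z_d}{m^{1+d/2}}\,e^{-|z-x|^2/2m}+o(m^{-1/2-d/2}),
\end{equation*}
together with the analogous expansion involving $V'(y)$ and $S'$.

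Substituting these asymptotics and using $m\sim n-m\sim n/2$, the dominant contribution is the lattice Riemann sum approximating
\begin{equation*}
\frac{c^{2}V(x)V'(y)}{(n/2)^{2+d}}\int_{z_d>0}z_d^{2}\,\exp\!\Big(-\tfrac{|z-x|^{2}+|z-y|^{2}}{n}\Big)\,dz.
\end{equation*}
Completing the square via $|z-x|^{2}+|z-y|^{2}=2|z-w|^{2}+|x-y|^{2}/2$ with $w=(x+y)/2$ separates the integral into $e^{-|x-y|^{2}/2n}$ times a Gaussian integral with centre $w$. Since $w_d=o(\sqrt n)$, the shift is asymptotically irrelevant and the substitution $z=w+\sqrt{n/4}\,u$ gives
\begin{equation*}
\int_{z_d>0}z_d^{2}\,e^{-2|z-w|^{2}/n}\,dz\sim C_d\,n^{1+d/2},
\end{equation*}
where $C_d$ is an explicit dimensional constant (the dominant $z_d^{2}\sim(n/4)u_d^{2}$ since $w_d=o(\sqrt n)$). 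Gathering the prefactors yields the claimed asymptotics with $e^{-|x-y|^{2}/2n}$ in place of $e^{-|y|^{2}/2n}$; under the standing hypothesis $x_d=o(|y|)$ (and with $|y|$ at most of order $\sqrt n$ in the range where the probability is of the predicted order), one has $|x-y|^{2}=|y|^{2}+o(n)$, so the two exponentials are asymptotically equivalent.

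The main technical obstacle is to upgrade the pointwise $o(\cdot)$ error in Lemma~\ref{lem:half-space}\ref{lem:it-c} to an error that remains negligible after summing over $z\in K$. I would handle this by truncating the sum to $\{|z-w|\le A\sqrt n\}$, discarding the complement via the Gaussian tail of one factor bounded by the LLT used as an upper bound (or, equivalently, via Lemma~\ref{lem:half-space}\ref{lem:it-a} together with a Fuk--Nagaev-type tail), and then noting that on the truncated region the product of the $o(m^{-1/2-d/2})$ error with the other main term integrated over $O(n^{d/2})$ lattice points gives a contribution of $o(n^{-1-d/2})$, precisely matching the target order. Some additional care is required near $\{z_d=0\}$ where the $z_d^{2}$ factor vanishes and the integral is still absolutely convergent; on this slab the sum is uniformly controlled by the bound $\mathbf{P}(x+S(m)=z,\tau_x>m)\le CV(x)z_d m^{-1-d/2}$, which is itself a consequence of the LLT.
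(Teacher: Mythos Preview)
Your approach is correct but takes a different route from the paper. Both split at $m=\lfloor n/2\rfloor$ and reverse time in the second half; the divergence comes after that. The paper applies Lemma~\ref{lem:half-space}\ref{lem:it-c} to \emph{one} factor only (the reversed walk started at $y$), keeps the other factor as a genuine probability, pulls out $\mathbf{P}(\tau_x>n-m)$ via part~\ref{lem:it-a}, and then invokes part~\ref{lem:it-b} (weak convergence to the Brownian meander) to evaluate the remaining conditional expectation $\mathbf{E}_x\bigl[\frac{S_d(n-m)}{\sqrt m}e^{-|S(n-m)-y|^2/2m}\,\big|\,\tau_x>n-m\bigr]$. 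The half-space meander factors as $d-1$ independent Brownian motions times a one-dimensional meander, and the Gaussian integrals yield $e^{-|y|^2/2n}$ directly. You instead apply part~\ref{lem:it-c} to \emph{both} factors and compute the resulting lattice sum as a Gaussian integral by completing the square.

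What each approach buys: the paper's route handles the error essentially for free, since the single $o(m^{-1/2-d/2})$ gets multiplied by $\mathbf{P}(\tau_x>n-m)\sim \varkappa V(x)(n-m)^{-1/2}$ and no truncation is needed; it also makes explicit use of the product structure special to the half-space. Your route is more symmetric and avoids the functional limit theorem altogether, but the price is exactly the truncation-and-tail bookkeeping you sketch. Note in particular that the error~$\times$~error contribution on the truncated ball is $o(n^{-1-d/2})$ rather than $o(V(x)V'(y)n^{-1-d/2})$; this is harmless only because $V'(y)$ is bounded below on the lattice and because the lemma is used in the regime $|y|^2=O(n)$ where $e^{-|y|^2/2n}$ stays bounded away from zero. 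Your final replacement of $e^{-|x-y|^2/2n}$ by $e^{-|y|^2/2n}$ likewise needs $|x|=o(\sqrt n)$ (not merely $x_d=o(|y|)$), which holds in the intended application but is not stated in the lemma.
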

\begin{proof}
Set $m = \lfloor\frac{n}{2}\rfloor$ and write 
\begin{align*}
\pr(x+&S(n) = y,\tau_x>n) \\
&= \sum_{z\in K} \pr(x+S(n-m) = z,\tau_x>n-m)\pr(z+S(m) = y,\tau_z>m)\\
&=\sum_{z\in K} \pr(x+S(n-m) = z,\tau_x>n-m)\pr(y+S'(m) = z,\tau'_y>m),
\end{align*}
where we recall that $S'=-S$ is the reverse random walk and
\begin{equation*}
     \tau'_y:=\inf\{n\ge 1: y+S'(n)\notin K\}.
\end{equation*}
Applying part \ref{lem:it-c} of Lemma \ref{lem:half-space} to the random walk $\{S'(n)\}$,
we obtain
\begin{align*}
\pr(x+&S(n) = y,\tau_x>n)\\
&=\frac{cV'(y)}{m^{1+d/2}}\sum_{z\in K}z_de^{-\vert z-y\vert^2/2m}\pr(x+S(n-m) = z,\tau_x>n-m)\\
&\hspace{1cm}+o\left(V'(y)m^{-1/2-d/2}\pr(\tau_x>n-m)\right).
\end{align*}
Using now Lemma~\ref{lem:half-space} \ref{lem:it-a}, we get
\begin{align*}
\pr(x+&S(n) = y,\tau_x>n)\\
&=\frac{cV'(y)V(x)}{m^{1/2+d/2}(n-m)^{1/2}}
\e_x\left[\frac{S_d(n-m)}{\sqrt{m}}e^{-\vert S(n-m)-y\vert^2/2m}\Big\vert\tau_x>n-m\right]\\
&\hspace{1cm}+o\left(\frac{V(x)V'(y)}{m^{-1/2-d/2}(n-m)^{1/2}}\right).
\end{align*}
It follows from part \ref{lem:it-b} of the previous lemma that
\begin{align*}
\e_x\left[\frac{S_d(n-m)}{\sqrt{m}}e^{-\vert S(n-m)-y\vert^2/2m}\Big\vert \tau_x>n-m\right]\sim \e\left[\left(M_{K,d}(1)\right)e^{-\vert M_K-y/\sqrt{m}\vert^2/2}\right],
\end{align*}
where $M_K(t)=(M_{K,1}(t),M_{K,2}(t),\ldots,M_{K,d}(t))$ is the meander in $K$.

Since $K={\bf R}^{d-1}\times{\bf R}_+$, all coordinates of $M_K$ are independent.
Furthermore, $M_{K,1}(t),\ldots,M_{K,d-1}(t)$ are Brownian motions and $M_{K,d}(t)$ is the 
one-dimensional Brownian meander. Combining these observations with $y_d=o(\sqrt{n})$, we
conclude that 
\begin{align*}
\e\left[\left(M_{K,d}(1)\right)e^{-\vert M_K-y/\sqrt{m}\vert^2/2}\right]
&\sim \e\left[M_{K,d}(1)e^{-M_{K,d}^2/2}\right]\prod_{i=1}^{d-1}\e\left[e^{-(M_{K,i}(1)-y_i/\sqrt{m})^2/2}\right]\\
&= C\prod_{i=1}^{d-1} e^{-y_i^2/4m}\sim Ce^{-\vert y\vert^2/2n}.
\end{align*}
This completes the proof.
\end{proof}

\begin{proof}[Proof of Theorem~\ref{thm:half-space}]
If $y$ is such that $y_d\ge\alpha \vert y\vert$ for some $\alpha>0$ then it suffices to repeat the proof of 
Theorem~\ref{thm:deep}. We thus consider the boundary case $y_d=o(\vert y\vert)$. Using Lemma~\ref{lem:half-space-2}, one easily obtains
\begin{equation*}
\lim_{\varepsilon\to0}\lim_{\vert y\vert\to\infty}\frac{\vert y\vert^d}{V(x)v'(y)}S_2(x,y,\varepsilon)=c.
\end{equation*}
It follows that 
\begin{align*}
\lim_{\varepsilon\to0}\lim_{\vert y\vert\to\infty}\frac{\vert y\vert^d}{V(x)v'(y)}S_2(x,y,\varepsilon) &= c\lim_{\varepsilon\to0}\lim_{\vert y\vert\to\infty}\sum_{n\ge \epsilon\vert y\vert^2}\vert y\vert^dn^{-1-\frac{d}{2}}e^{-\frac{\vert y\vert^2}{2n}}
\\&= c\int_0^{\infty}v^{-1-{d}/{2}}e^{-\frac{1}{2v}}dv,
\end{align*}
and the last integral is finite. 
It follows that the theorem will be proven if we show that
\begin{equation}
\label{half-space.1}
\lim_{\varepsilon\to0}\lim_{\vert y\vert\to\infty}\frac{\vert y\vert^d}{V(x)V'(y)}S_1(x,y,\varepsilon)=0.
\end{equation}
Using an appropriate rotation, we can reduce everything to the case $y_k=o(\vert y\vert)$ for any $k=2,\ldots, d-1$
and $y_1\sim \vert y\vert$. This also implies $y_d = o(\vert y\vert)$.

We first split the probability $\mathbf{P}(x+S(n)=y,\tau_x>n)$ into two parts:
\begin{equation*}
      \mathbf{P}(x+S(n)=y,\tau_x>n,\max_{k\le n}\vert X_1(k)\vert\le\gamma y_1)\\
+\mathbf{P}(x+S(n)=y,\tau_x>n,\max_{k\le n}\vert X_1(k)\vert>\gamma y_1),
\end{equation*}
where $\gamma\in(0,1)$. Introduce the stopping time
\begin{equation*}
\sigma_\gamma:=\inf\{k\ge1:\vert X_1(k)\vert>\gamma y_1\}.
\end{equation*}
Then, by the Markov property,
\begin{align*}
&\mathbf{P}(x+S(n)=y,\tau_x>n,\max_{k\le n}\vert X_1(k)\vert>\gamma y_1)\\
&\hspace{1cm}=\sum_{k=1}^n \mathbf{P}(x+S(n)=y,\tau_x>n,\sigma_\gamma=k)\\
&\hspace{1cm}\le\sum_{k=1}^n\mathbf{P}(\tau_x>k-1)\mathbf{P}(\vert X_1\vert>\gamma y_1)\max_z\mathbf{P}(S(n-k)=z).
\end{align*}
Using now the bounds $\mathbf{P}(\tau_x>k)\le CV(x)k^{-1/2}$ and $\max_z\mathbf{P}(S(k)=z)\le Ck^{-d/2}$,
we obtain
\begin{align*}
&\mathbf{P}(x+S(n)=y,\tau_x>n,\max_{k\le n}\vert X_1(k)\vert>\gamma y_1)\\
&\hspace{1cm}\le CV(x)\mathbf{P}(\vert X_1\vert>\gamma y_1)\sum_{k=1}^n\frac{1}{\sqrt{k}}\frac{1}{(n-k+1)^{d/2}}\\
&\hspace{1cm}\le C V(x)\mathbf{P}(\vert X_1\vert>\gamma y_1)\frac{(\log n)^{\mathbf{1}_{d=2}}}{\sqrt{n}}.
\end{align*}

Here, in the last step we have splited the sum $\sum_{k=1}^n\frac{1}{\sqrt{k}}\frac{1}{(n-k+1)^{d/2}}$ into $\sum_{k=1}^\frac{n}{2}$ and $\sum_{k=\frac{n}{2}}^n$ and used elementary inequalities.

This implies that
\begin{equation*}
\sum_{n=1}^{\varepsilon\vert y\vert^2}\mathbf{P}(x+S(n)=y,\tau_x>n,\max_{k\le n}\vert X_1(k)\vert>\gamma y_1)\\
\le C\sqrt{\varepsilon}V(x)\mathbf{P}(\vert X_1\vert>\gamma y_1)\vert y\vert\left(\log \vert y\vert\right)^{\mathbf{1}_{d=2}}.
\end{equation*}
As a result, for all random walks satisfying
\begin{equation*}
\mathbf{E}\left[\vert X\vert^{d+1}\left(\log \vert X\vert\right)^{\mathbf{1}_{d=2}}\right]<\infty,
\end{equation*}
we have
\begin{equation}
\label{half-space.2} 
\sum_{n=1}^{\varepsilon\vert y\vert^2}\mathbf{P}(x+S(n)=y,\tau_x>n,\max_{k\le n}\vert X_1(k)\vert>\gamma y_1)
=o\left(\frac{V(x)}{\vert y\vert^d}\right).
\end{equation}

In order to estimate $\mathbf{P}(x+S(n)=y,\tau_x>n,\max_{k\le n}\vert X_1(k)\vert\le\gamma y_1)$ 
we shall perform the following change of measure:
\begin{equation*}
\overline{\mathbf{P}}(X(k)\in dz)
=\frac{e^{hz_1}}{\varphi(h)}\mathbf{P}(X(k)\in dz; \vert X_1(k)\vert\le\gamma y_1),
\end{equation*}
where
\begin{equation*}
\varphi(h)=\mathbf{E}\left[e^{hX_1};\vert X_1\vert\le \gamma y_1\right].
\end{equation*}
Therefore,
\begin{equation}
\label{half-space.3}
     \mathbf{P}(x+S(n)=y,\tau_x>n,\max_{k\le n}\vert X_1(k)\vert\le\gamma y_1)=e^{-hy_1}\varphi^n(h)\overline{\mathbf{P}}(x+S(n)=y,\tau_x>n).
\end{equation}
According to \cite[Eq.~(21)]{FuNa-71},
\begin{align*}
&e^{-hy_1}\varphi^n(h)\\
&\le
\exp\left\{-hy_1+hn\mathbf{E}[X_1;\vert X_1\vert\le\gamma y_1]+\frac{e^{h\gamma y_1}-1-h\gamma y_1}{\gamma^2y_1^2}
n\mathbf{E}[X_1^2;\vert X_1\vert\le\gamma y_1]\right\}.
\end{align*}
Choosing
\begin{equation}
\label{half-space.4}
h=\frac{1}{\gamma y_1}\log\left(1+\frac{\gamma y_1^2}{n\mathbf{E}[X_1^2;\vert X_1\vert\le\gamma y_1]}\right)
\end{equation}
and noting that
\begin{equation*}
\left\vert \mathbf{E}[X_1;\vert X_1\vert\le\gamma y_1]\right\vert
=\left\vert\mathbf{E}[X_1;\vert X_1\vert>\gamma y_1]\right\vert
\le\frac{1}{\gamma y_1}\mathbf{E}[X_1^2]=\frac{1}{\gamma y_1},
\end{equation*}
we conclude that uniformly for $n\le \gamma\vert y\vert^2$, it holds
\begin{equation*}
e^{-hy_1}\varphi^n(h)\le \left(\frac{en}{\gamma y_1^2}\right)^{1/\gamma}.
\end{equation*}
Plugging this into \eqref{half-space.3}, we obtain that uniformly for $n\le \gamma\vert y\vert^2$,
\begin{multline}
\label{eq:half-space.5}
\mathbf{P}\left(x+S(n)=y,\tau_x>n,\max_{k\le n}\vert X_1(k)\vert\le\gamma y_1\right)\\
\le C(\gamma)\left(\frac{n}{\vert y\vert^2}\right)^{1/\gamma}\overline{\mathbf{P}}(x+S(n)=y,\tau_x>n).
\end{multline}
According to \cite[Thm~6.2]{Esseen68}, there exists an absolute constant $C$ such that
\begin{equation*}
\sup_z\overline{\mathbf{P}}(S(n)=z)\le\frac{C}{n^{d/2}}\chi^{-d/2},
\end{equation*}
where
\begin{equation*}
\chi:=\sup_{u\ge 1}\frac{1}{u^2}\inf_{\vert t\vert=1}\overline{\mathbf{E}}\left[(t,X(1)-X(2));\vert X(1)-X(2)\vert\le u\right].
\end{equation*}
Since $h$ defined in \eqref{half-space.4} converges to zero as $\vert y\vert\to\infty$ uniformly in $n\le \gamma\vert y\vert^2$,
\begin{equation*}
\overline{\mathbf{E}}\left[(t,X(1)-X(2));\vert X(1)-X(2)\vert\le u\right]
\to\mathbf{E}\left[(t,X(1)-X(2));\vert X(1)-X(2)\vert\le u\right]
\end{equation*}
for every fixed $u$. Since $S(n)$ is truly $d$-dimensional under the original measure,
\begin{equation*}
     \inf_{\vert t\vert=1}\mathbf{E}\left[(t,X(1)-X(2));\vert X(1)-X(2)\vert\le u\right]>0
\end{equation*}
for all large values $u$.
As a result, there exists $\chi_0>0$ such that $\chi\ge\chi_0$ for all $\vert y\vert$ large enough and
all $n\le \gamma\vert y\vert^2$. Consequently,
\begin{equation}
\label{half-space.6}
\sup_z\overline{\mathbf{P}}(S(n)=z)\le \frac{C\chi_0^{-d/2}}{n^{d/2}}.
\end{equation}
Combining this bound with \eqref{eq:half-space.5}, we obtain for all $r\in(0,1)$ and $\gamma<2/d$,
\begin{multline*}
\sum_{n=1}^{\vert y\vert^{2-r}} \mathbf{P}\left(x+S(n)=y,\tau_x>n,\max_{k\le n}\vert X_1(k)\vert\le\gamma y_1\right)\\
\le C(\gamma)\chi_0^{-d/2}\vert y\vert^{-2/\gamma} \sum_{n=1}^{\vert y\vert^{2-r}}n^{1/\gamma-d/2}
\le C(\gamma)\chi_0^{-d/2}\vert y\vert^{-2/\gamma}\vert y\vert^{(2-r)(1/\gamma-d/2+1)},
\end{multline*}
for all $n\le \gamma\vert y\vert^2$.   
If we choose $\gamma$ so small that $r(1/\gamma-d/2+1)>2$, then
\begin{equation}
\label{half-space.7}
\sum_{n=1}^{\vert y\vert^{2-r}} \mathbf{P}\left(x+S(n)=y,\tau_x>n,\max_{k\le n}\vert X_1(k)\vert\le\gamma y_1\right)
=o\left(\frac{1}{\vert y\vert^d}\right).
\end{equation}

In the case $n\ge\vert y\vert^{2-r}$, we cannot ignore the condition $\tau_x>n$. By the Markov property at times
$n/3$ and $2n/3$ and by \eqref{half-space.6},
\begin{align*}
&\overline{\mathbf{P}}(x+S(n)=y,\tau_x>n)\\
&\le\sum_{z,z'}\overline{\mathbf{P}}(x+S(n/3)=z,\tau_x>n/3)\overline{\mathbf{P}}(z+S(n/3)=z')
\overline{\mathbf{P}}(z'+S(n/3)=y,\tau_{z'}>n/3)\\
&=\sum_{z,z'}\overline{\mathbf{P}}(x+S(n/3)=z,\tau_x>n/3)\overline{\mathbf{P}}(z+S(n/3)=z')
\overline{\mathbf{P}}(y+S'(n/3)=z',\tau_{y}>n/3)\\
&\le\frac{C}{n^{d/2}}\overline{\mathbf{P}}(\tau_x>n/3)\overline{\mathbf{P}}(\tau'_y>n/3).
\end{align*}
Therefore, it remains to show that, uniformly in $n\in[\vert y\vert^{2-r},\vert y\vert^2]$,
\begin{equation}
\label{half-space.8}
\overline{\mathbf{P}}(\tau_x>n/3)\le C\frac{1+x_d}{\sqrt{n}}.
\end{equation}
Indeed, from this estimate and from the corresponding estimate for the reverse walk we get
\begin{equation*}
\overline{\mathbf{P}}(x+S(n)=y,\tau_x>n)\le C\frac{(x_d+1)(y_d+1)}{n^{d/2+1}}.
\end{equation*}
With the help of \eqref{eq:half-space.5}, this implies that
\begin{equation*}
\sum_{n=\vert y\vert^{2-r}}^{\varepsilon\vert y\vert^2}\mathbf{P}\left(x+S(n)=y,\tau_x>n,\max_{k\le n}\vert X_1(k)\vert\le\gamma y_1\right) \le C\varepsilon^{1/\gamma-d/2}(x_d+1)(y_d+1)\vert y\vert^{-d}.
\end{equation*}
Combining this with \eqref{half-space.2} and \eqref{half-space.7}, we obtain \eqref{half-space.1}.

To derive \eqref{half-space.8}, we first estimate some moments of the random walk $S_d(n)$ under 
$\overline{\mathbf{P}}$. By definition of this probability measure,
\begin{equation*}
\overline{\mathbf{E}}[X_d]=\frac{1}{\varphi(h)}\mathbf{E}\left[X_de^{hX_1};\vert X_1\vert\le \gamma y_1\right].
\end{equation*}
For the expectation on the right-hand side, we have the representation
\begin{align*}
&\mathbf{E}\left[X_de^{hX_1};\vert X_1\vert\le \gamma y_1\right]\\
&\hspace{1cm}=\mathbf{E}\left[X_d;\vert X_1\vert\le \gamma y_1\right]+h\mathbf{E}\left[X_dX_1;\vert X_1\vert\le \gamma y_1\right]\\
&\hspace{3cm}+\mathbf{E}\left[X_d(e^{hX_1}-1-hX_1);\vert X_1\vert\le \gamma y_1\right]\\
&\hspace{1cm}=-\mathbf{E}\left[X_d;\vert X_1\vert> \gamma y_1\right]-h\mathbf{E}\left[X_dX_1;\vert X_1\vert> \gamma y_1\right]\\
&\hspace{3cm}+\mathbf{E}\left[X_d(e^{hX_1}-1-hX_1);\vert X_1\vert\le \gamma y_1\right].
\end{align*}
In the last step, we have used the equalities $\mathbf{E}[X_d]=\mathbf{E}[X_dX_1]=0$.
If
\begin{equation}
\label{half-space.9}
\mathbf{E}\vert X\vert^{3+\delta}<\infty,
\end{equation}
then by the Markov inequality,
\begin{equation*}
\mathbf{E}\left[X_d;\vert X_1\vert> \gamma y_1\right]+h\mathbf{E}\left[X_dX_1;\vert X_1\vert> \gamma y_1\right]
=o(y_1^{-2})=o(n^{-1}).
\end{equation*}
Therefore,
\begin{equation*}
\mathbf{E}\left[X_de^{hX_1};\vert X_1\vert\le \gamma y_1\right]=o(n^{-1})+
\mathbf{E}\left[X_d(e^{hX_1}-1-hX_1);\vert X_1\vert\le \gamma y_1\right].
\end{equation*}
It is obvious that $\vert e^x-1-x\vert\le \frac{x^2}{2}e^{\vert x\vert}$. Therefore,
\begin{align*}
\left\vert \mathbf{E}\left[X_d(e^{hX_1}-1-hX_1);\vert X_1\vert\le \gamma y_1\right]\right\vert
&\le \frac{h^2}{2}\mathbf{E}\left[\vert X_d\vert X_1^2e^{h\vert X_1\vert};\vert X_1\vert\le\gamma y_1\right]\\
&\le \frac{e}{2}h^2\mathbf{E}\vert X_d\vert X_1^2+h^2e^{h\gamma y_1}\mathbf{E}\left[\vert X_d\vert X_1^2;\vert X_1\vert>\frac{1}{h}\right]\\
&\le \frac{e}{2}h^2\mathbf{E}\vert X_d\vert X_1^2+h^{2+\delta}e^{h\gamma y_1}\mathbf{E}\vert X_d\vert \vert X_1\vert^{2+\delta}\\
&\le \frac{e}{2}h^2\mathbf{E}\vert X\vert^3 + h^{2+\delta}e^{h\gamma y_1}\mathbf{E}\vert X\vert^{3+\delta}.
\end{align*}
In the last step, we have used H\"older's inequality. 
It is immediate from the definition of $h$ that $h^2\le cn^{-1}$. Further, if $n\ge\vert y\vert^{2-r}$
with some $r<\frac{\delta}{2}$, then $h^{2+\delta}e^{h\gamma y_1}=o(n^{-1})$. From these estimates and from \eqref{half-space.9}, we obtain that uniformly in $n\in[\vert y\vert^{2-r},\vert y\vert^2]$, 
\begin{equation}
\label{half-space.10}
     \left\vert\mathbf{E}\left[X_de^{hX_1};\vert X_1\vert\le \gamma y_1\right]\right\vert\le\frac{c}{n}.
\end{equation}

By the same arguments,
\begin{align}
\label{half-space.11}
\nonumber
\varphi(h)&=\mathbf{E}\left[e^{hX_1};\vert X_1\vert\le\gamma y_1\right]\\
\nonumber
&=\mathbf{P}(\vert X_1\vert\le \gamma y_1)+h\mathbf{E}\left[X_1;\vert X_1\vert\le\gamma y_1\right]
+\mathbf{E}\left[e^{hX_1}-1-hX_1;\vert X_1\vert\le\gamma y_1\right]\\
\nonumber
&=1-\mathbf{P}(\vert X_1\vert> \gamma y_1)-h\mathbf{E}\left[X_1;\vert X_1\vert>\gamma y_1\right]
+\mathbf{E}\left[e^{hX_1}-1-hX_1;\vert X_1\vert\le\gamma y_1\right]\\
&=1+o(n^{-1}).
\end{align}
Combining this with \eqref{half-space.10}, we finally obtain
\begin{equation}
\label{half-space.12}
     \left\vert\overline{\mathbf{E}}X_d\right\vert\le\frac{c_1}{n}.
\end{equation}

We now turn to the second and third moments of $X_d$ under $\overline{\mathbf{P}}$.
Using \eqref{half-space.11} and the moment assumption, we have
\begin{align*}
\overline{\mathbf{E}}X_d^2&=\frac{1}{\varphi(h)}\mathbf{E}[X_d^2e^{hX_1};\vert X_1\vert\le\gamma y_1]
=(1+o(1))\mathbf{E}[X_d^2e^{hX_1};\vert X_1\vert\le\gamma y_1]\\
&=\mathbf{E}[X_d^2;\vert X_1\vert\le\gamma y_1]+o(1)+O\left(\mathbf{E}\left[X_d^2(e^{hX_1}-1);\vert X_1\vert\le\gamma y_1\right]\right)\\
&=1+o(1)+O\left(he^{h\gamma y_1}\right).
\end{align*}
Noting that $he^{h\gamma y_1}=o(1)$ for all $n\ge\vert y\vert^{2-r}$, we get
\begin{equation}
\label{half-space.13}
\overline{\mathbf{E}}X_d^2=1+o(1). 
\end{equation}
Similarly,
\begin{align*}
\overline{\mathbf{E}}\vert X_d\vert^3&=(1+o(1))\mathbf{E}[\vert X_d\vert^3e^{hX_1};\vert X_1\vert\le\gamma y_1]\\
&\le c\left(\mathbf{E}[\vert X_d\vert^3;\vert X_1\vert\le 1/h]+e^{h\gamma y_1}\mathbf{E}[\vert X_d\vert^3;\vert X_1\vert>1/h]\right)\\
&\le c\left(\mathbf{E}\vert X_d\vert^3+h^\delta e^{h\gamma y_1}\mathbf{E}\vert X_d\vert^{3+\gamma}\right).
\end{align*}
Using once again the fact that $h^\delta e^{h\gamma y_1}=o(1)$ for $n\ge\vert y\vert^{2-r}$, we arrive at
\begin{equation}
\label{half-space.14}
\overline{\mathbf{E}}\vert X_d\vert^3\le c_3.
\end{equation}

Now we can derive \eqref{half-space.8}. First, it follows from \eqref{half-space.12} that
\begin{equation*}
\overline{\mathbf{P}}(\tau_x>n/3)\le \overline{\mathbf{P}}(\tau^0_{x+c_1}>n/3),
\end{equation*}
where
\begin{equation*}
\tau_y^0:=\inf\{k\ge1:y+S_d^0(k)\le0\}\quad\text{and}\quad
S_d^0(k)=S_d(k)-k\overline{\mathbf{E}}X_d.
\end{equation*}
Applying \cite[Lem.~25]{DSW18} to the random walk $S_d^0$, we have
\begin{equation*}
\overline{\mathbf{P}}(\tau^0_y>k)
\le\frac{\overline{\mathbf{E}}[y+S_d^0(k);\tau_y^0>k]}{\overline{\mathbf{E}}[(y+S_d^0(k))^+]}.
\end{equation*}
Relations \eqref{half-space.13} and \eqref{half-space.14} allow the application of the central limit theorem
to the walk $S_d^0(k)$, which gives $\overline{\mathbf{E}}[(y+S_d^0(k))^+]\ge c\sqrt{k}$. Consequently,
\begin{equation*}
\overline{\mathbf{P}}(\tau^0_y>k)\le\frac{C}{\sqrt{k}}\overline{\mathbf{E}}[y+S_d^0(k);\tau_y^0>k].
\end{equation*}
Further, by the optional stopping theorem,
\begin{equation*}
\overline{\mathbf{E}}[y+S_d^0(k);\tau_y^0>k]=y-\overline{\mathbf{E}}[y+S_d^0(\tau_y^0);\tau_y^0\le k]
\le y-\overline{\mathbf{E}}[y+S_d^0(\tau_y^0)].
\end{equation*}
We now use inequality \cite[Eq.~(7)]{M73}, which states that there exists an absolute constant $A$ such that
\begin{equation*}
-\overline{\mathbf{E}}[y+S_d^0(\tau_y^0)]
\le A\frac{\overline{\mathbf{E}}\vert X_d\vert^3}{\overline{\mathbf{E}}X_d^2}.
\end{equation*}
Combining this with \eqref{half-space.13} and \eqref{half-space.14}, we finally get
\begin{equation*}
\overline{\mathbf{P}}(\tau^0_y>k)\le \frac{C(y+1)}{\sqrt{k}},
\end{equation*}
which implies \eqref{half-space.8}.
\end{proof}

\section{Boundary asymptotics of the Green function: the general case}
\label{sec:boundary}

The proof of Theorem \ref{thm:asymp_Green_boundary} consists in splitting the Green function $G_K(x,y)$ in \eqref{eq:Green_function_def} as a sum of two terms, the first (resp.\ second) one being given by the contribution in the large deviation (resp.\ asymptotic) regime. 

The main difficulty is to prove that the first term is actually dominated by the second one; in order to achieve this, we use a coupling of the random walk with a Brownian motion, with stronger bounds than the ones initially used in \cite{DeWa-15}. The drawback is the need of stronger moment assumptions on the increments, which is the main reason why the assumption \ref{H:moments} is used instead of the classical moment condition of \cite{DeWa-15}, namely $\e\vert X\vert^{r(p)}<\infty$ with $r(p)=p$ if $p>2$ and $r(p)=2+\delta$ for some $\delta>0$ if $p\leq 2$.

The arguments to show Theorem \ref{thm:asymp_Green_boundary} are very different in the $\mathcal C^2$ regular case, and two proofs are provided in this section.

\subsection*{Exact asymptotics with $\mathcal C^{2}$-regularity}
In this part, we will assume that the cone is $\mathcal C^{2}$. Before starting the proof of Theorem \ref{thm:asymp_Green_boundary}, we need to introduce some notation.
Let $\vert y\vert\to\infty$ in such a way that $\dist(y,\partial K)=o(\vert y\vert)$. 
Let $y_\perp\in\partial K$ be defined by the relation 
\begin{equation*}
     \dist(y,\partial K)=\vert y-y_\perp\vert.
\end{equation*}
Set $\sigma(y):=y_\perp/\vert y\vert\in\partial\Sigma$ and assume that $\sigma(y)$ converges as $\vert y\vert\to\infty$ to some $\bar\sigma\in \partial\Sigma$.
Let $H_y$ denote a tangent hyperplane at point $y_\perp.$ Let $P_n$ be the distribution
of the linear interpolation of $t\mapsto (y+S(nt))/\sqrt{n}$ conditioned to stay in the half-space $K_y$ containing the cone $K$ and having
boundary $H_y$. Then $P_n\to P$ weakly on $\mathcal C([0,1])$.
Denote
\begin{equation*}
A_n:=\{f\in \mathcal C([0,1]): f(k/n)\in K\text{ for all }1\le k\le n\}.
\end{equation*}
Then 
\begin{equation*}
\liminf A_n\supseteq\{f\in \mathcal C([0,1]): f(t)\in K\text{ for all }t\in(0,1]\}
\end{equation*}
and
\begin{equation*}
\limsup \overline{A}_n\subseteq\{f\in \mathcal C([0,1]): f(t)\in \overline{K}\text{ for all }t\in(0,1]\},
\end{equation*}
where $\overline{A}$ denote the closure of $A$.

Denote for every fixed $n$ by $[0,1]\ni t\mapsto S(nt)$ the linear interpolation of $\{S(k)\}_{k\le n}$. The conditions to apply  \cite[Thm~2.3]{Du-78} are met. This leads to an invariance principle: $[0,1]\ni r\mapsto\frac{y+S(nr)}{\sqrt{n}}$ converges weakly as $\frac{n}{\vert y\vert^2}\to t$ to the Brownian meander $(B_r)_{r\le 1}$ inside the cone $K$ started at $\frac{\sigma}{\sqrt{t}}$. In particular, with $T_y:=\inf\{n\ge1: y+S(n)\notin K_y\}$,
\begin{equation}
\label{1.step.1}
     \mathbf{P}\left(\frac{y+S(n)}{\sqrt{n}}\in B\Big\vert T_y>n\right)\sim Q_{\sigma,t}(B)=\int_B q_{\sigma,t}(z)dz,\quad
\frac{n}{\vert y\vert^2}\to t,
\end{equation}
where $q_{\sigma,t}(z)$ is the density of the Brownian meander in $K$, started at $\frac{\sigma}{\sqrt{t}}$ and evaluated at time $1$. 
Theorem 2.3 in \cite{Du-78} also leads to 
\begin{equation}
\label{1.step.2}
\mathbf{P}(\tau_y>n\vert T_y>n)\to c_{\sigma,t}.
\end{equation}
The relations \eqref{1.step.1} and \eqref{1.step.2} imply that
\begin{equation}
\label{V_low_bound}
V(y)\ge c\vert y\vert^{p-1}(1+\dist(y,\partial K)).
\end{equation}
Indeed, by the harmonicity of $V$, one has for all $n\ge1$,
\begin{equation*}
     V(y)=\mathbf{E}[V(y+S(n));\tau_y>n].
\end{equation*}
Fix now some $\epsilon>0$ and note that choosing $n=\lfloor\vert y\vert^2\rfloor$,
it follows that $V(z)\sim u(z)$ uniformly as $z\to\infty$ as long as the distance of $z$ to $\partial K$ is at least $\epsilon\vert z\vert$, see \cite[Lem.~13]{DeWa-15}.
We obtain, as $\vert y\vert\to\infty$ and $\epsilon\rightarrow 0$,
\begin{equation*}
     V(y)\ge \mathbf{P}(T_y>\lfloor\vert y\vert\rfloor^2)c_{\sigma,1}\vert y\vert^p\int_K u(z)q_{\sigma,1}(z)dz.
\end{equation*}
Due to results for the one-dimensional random walks, we arrive at
\begin{equation*}
     \mathbf{P}(T_y>\lfloor\vert y\vert\rfloor^2)\ge c\frac{1+\dist(y,\partial K)}{\vert y\vert},
\end{equation*}
which establishes \eqref{V_low_bound}.

Before proving Theorem \ref{thm:asymp_Green_boundary}, we record an auxiliary estimate needed in its proof.

\begin{lemma}\label{thm:auxmeander}
Define
\begin{equation*}
\phi_\sigma(t)=c_{\sigma,t}\int_K u(z)e^{-\frac{\vert z\vert^2}{2}}q_{\sigma,t}(z)dz.
\end{equation*}
Then there exists some $c>0$ such that as $t\to 0$, $\phi_\sigma(t)=o(e^{-c/t})$.
\end{lemma}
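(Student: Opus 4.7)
The plan is to reduce $\phi_\sigma(t)$ to an elementary Gaussian integration, by first dominating the meander density $q_{\sigma,t}$ by a Gaussian centred at the starting point $\sigma/\sqrt t$, then combining this with the factor $e^{-\vert z\vert^{2}/2}$ in the integrand and extracting the required exponential decay.

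As a preliminary, since $c_{\sigma,t}$ is the limit in \eqref{1.step.2} of conditional probabilities, one has $c_{\sigma,t}\le 1$, so it suffices to show that
\begin{equation*}
I(t):=\int_K u(z)e^{-\vert z\vert^{2}/2}q_{\sigma,t}(z)\,dz=o(e^{-c/t})
\end{equation*}
for some $c>0$. The first key step is to obtain, for constants $C,c_0>0$ independent of $t$, the heat kernel bound
\begin{equation*}
q_{\sigma,t}(z)\le C\exp(-\vert z-\sigma/\sqrt t\vert^{2}/c_0).
\end{equation*}
I would derive this by interpreting the meander in $K$ started at the boundary point $\sigma/\sqrt t$ as the weak limit, as $\epsilon\to 0$, of meanders started from interior points $\sigma/\sqrt t+\epsilon v$ (with $v$ pointing into $K$). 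For each interior starting point $y=\sigma/\sqrt t+\epsilon v$, the meander density at time $1$ equals $p^K(1,y,z)/\mathbf{P}_y(\tau^{\bm}>1)$, and the Gaussian estimate \eqref{estimateK} bounds the numerator by $C\mathbf{P}_y(\tau^{\bm}>1)\exp(-\vert z-y\vert^{2}/c_0)$; the survival probability cancels, the $\epsilon\to 0$ limit of the remaining Gaussian is continuous in $y$, and $\mathcal C^{2}$-regularity ensures the constant $C$ is uniform.

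Given this bound, the next step is purely algebraic. Completing the square,
\begin{equation*}
\tfrac12\vert z\vert^{2}+c_0^{-1}\vert z-\sigma/\sqrt t\vert^{2}=\alpha\vert z-z^\star\vert^{2}+\tfrac{1}{(c_0+2)\,t},
\end{equation*}
with $\alpha=(c_0+2)/(2c_0)>0$ and $z^\star=\tfrac{2}{c_0+2}\,\sigma/\sqrt t$, which satisfies $\vert z^\star\vert\le t^{-1/2}$. Substituting, and using the polynomial bound $u(z)\le C'\vert z\vert^{p}$ (which follows from $p$-homogeneity of $u$ and its boundedness on $\Sigma$),
\begin{equation*}
I(t)\le Ce^{-1/((c_0+2)t)}\int_K \vert z\vert^{p}e^{-\alpha\vert z-z^\star\vert^{2}}\,dz\le C''\,t^{-p/2}e^{-1/((c_0+2)t)},
\end{equation*}
where the final estimate is a standard Gaussian moment about $z^\star$. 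The polynomial prefactor $t^{-p/2}$ is absorbed by the exponential, giving $\phi_\sigma(t)=o(e^{-c/t})$ for any $c\in(0,1/(c_0+2))$.

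The hardest part will be making the Gaussian bound on $q_{\sigma,t}$ rigorous, since at the boundary starting point $\sigma/\sqrt t$ both numerator and denominator of the heat-kernel ratio vanish. The limiting procedure above requires uniform control of the constants in \eqref{estimateK} as the starting point approaches $\partial K$, which is exactly the type of boundary regularity of the Dirichlet heat kernel furnished by the $\mathcal C^{2}$-assumption (via the Gaussian estimates in Lipschitz domains of \cite[Sec.~6]{GySal-11} already invoked in the proof of Lemma~\ref{distribution_large_range}); a fully rigorous argument would consist in checking that these uniform bounds pass through the Doob $h$-transform defining the meander, or alternatively in transferring the estimate to the discrete level via \eqref{1.step.1}--\eqref{1.step.2} using the local probability bounds from Proposition~\ref{bound_local_far_away}.
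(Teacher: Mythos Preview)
Your argument is correct and takes a different route from the paper's. The paper does not bound the meander density $q_{\sigma,t}$ directly; instead it compares the meander in $K$ with the meander in the tangent half-space $K_y\supset K$, using that $c_{\sigma,t}\to1$ and an inequality between the two conditioned expectations to obtain $\phi_\sigma(t)\le C\,\e_{\sigma/\sqrt{t}}\bigl[e^{-\vert M^\sigma(1)\vert^2/4}\bigr]$ for the half-space meander $M^\sigma$. Rotational invariance then reduces this to a product of $d-1$ ordinary Brownian motions and one one-dimensional Brownian meander, whose explicit densities make the exponential decay immediate. This avoids the heat-kernel machinery entirely. Your approach is more direct and yields an explicit decay rate $c=1/(c_0+2)$ from the completing-the-square step, at the price of invoking the uniform Gaussian bound on the Dirichlet heat kernel. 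Regarding the ``hardest part'' you flag: you do not actually need a pointwise bound on $q_{\sigma,t}$. Since $z\mapsto u(z)e^{-\vert z\vert^2/2}$ is bounded and continuous on $\overline K$, the integral $\int_K u(z)e^{-\vert z\vert^2/2}q_y(z)\,dz$ is continuous under weak convergence of the meander laws as the interior starting point $y$ tends to $\sigma/\sqrt t$; the right-hand side of your bound is manifestly continuous in $y$ as well, so the inequality passes to the limit without any density-level argument. The constants in \eqref{estimateK} are already uniform in $y\in K$ by the results of \cite{GySal-11} for inner uniform domains, so the $\mathcal C^2$ caveat is not needed here.
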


\begin{proof}
First, due to the invariance principle for the half-space, it holds 
\begin{equation*}
     c_{\sigma,t} = \pr_{\sigma}(\tau^{\me}>t) = \pr_{\sigma/\sqrt{t}}(\tau^{\me}>1),
\end{equation*}
where $\tau^{\me}: = \inf\{t>0:M^{\sigma}(t)\not\in K_y\}$. Here $M^{\sigma}(t)$ is a Brownian meander
in $K_{y}$, whereas we will denote the Brownian meander in $K$ by $M_K^{\sigma}(t)$.
Since $\vert \sigma\vert = 1$ and $K$ is contained in $K_y$, it is clear that $c_{\sigma,t}\ra 1$ as $t\ra 0$. 

Then we have
\begin{equation*}
     \phi_{\sigma}(t)\le C\e_{\sigma/\sqrt{t}}\left[u(M_K^{\sigma}(1))e^{-\frac{\vert M_K^{\sigma}(1)\vert^2}{2}}\right]\le C\e_{\sigma/\sqrt{t}}\left[u(M^{\sigma}(1))e^{-\frac{\vert M^{\sigma}(1)\vert^2}{2}}\right].
\end{equation*}
The second inequality can be easily justified using the invariance principles for meanders in $K$ and
$K_y$ as well as the fact that $c_{\sigma,t}\ra 1$ is bounded away from zero. It follows that
\begin{equation*}
     \phi_{\sigma}(t)\le C\e_{\sigma/\sqrt{t}}\left[e^{-\frac{\vert M^{\sigma}(1)\vert^2}{4}}\right].
\end{equation*}
Due to rotational invariance of Brownian motion, the expectation above doesn't depend on $\sigma$,
so that we can choose $\sigma = (1,0,\ldots,0)$ and $K_y = \R^{d-1}\times \R_+$. The first
$d-1$ coordinates become independent Brownian motions, whereas the last one is a
$1$-dimensional Brownian meander (see \cite{DIM77} for its density). This finishes the proof.
\end{proof}



\begin{proof}[Proof of Theorem \ref{thm:asymp_Green_boundary} when $K$ is $\mathcal C^{2}$] 
To estimate the contribution coming from large values of $n$,
one does not need the limit theorems from the previous paragraph: quite rough estimates turn out to be sufficient.

Set $m=\lfloor n/2\rfloor$. Then, applying the Markov property at time $m$
and inverting the time in the second part of the path, we obtain
\begin{align*}
&\mathbf{P}(x+S(n)=y,\tau_x>n)\\
&\hspace{1cm}=\sum_{z\in K}\mathbf{P}(x+S(m)=z,\tau_x>m)\mathbf{P}(y+S'(n-m)=z,\tau_y'>n-m)\\
&\hspace{1cm}\le\max_{z\in K}\mathbf{P}(x+S(m)=z,\tau_x>m)\mathbf{P}(\tau_y'>n-m).
\end{align*}
By \cite[Thm~5]{DeWa-15},
\begin{equation*}
\max_{z\in K}\mathbf{P}(x+S(m)=z,\tau_x>m)\le C\frac{V(x)}{m^{p/2+d/2}}.
\end{equation*}
Furthermore, due to results for the one-dimensional walks (see for example \cite[Lem.~3]{DW16}),
\begin{equation}
\label{3.step.1}
\mathbf{P}(\tau_y'>n-m)\le \mathbf{P}(T'_y>n-m)\le C\frac{1+\dist(y,\partial K)}{\sqrt{n-m}}.
\end{equation}
Combining these estimates, we obtain
\begin{equation*}
\mathbf{P}(x+S(n)=y)\le CV(x)(1+\dist(y,\partial K))n^{-(p+d+1)/2}.
\end{equation*}
Consequently, for $A\ge2$ and $\vert y\vert\ge1$,
\begin{align}
\label{3.step}
\nonumber
\sum_{n\ge A\vert y\vert^2}\mathbf{P}(x+S(n)=y)
&\le CV(x)(1+\dist(y,\partial K))\sum_{n\ge A\vert y\vert^2}n^{-(p+d+1)/2}\\
&\le CV(x)A^{-(p+d-1)/2}\frac{1+\dist(y,\partial K)}{\vert y\vert^{p+d-1}}.
\end{align}

We turn now to the middle part, namely, $n\in(\varepsilon\vert y\vert^2,A\vert y\vert^2)$.
Using again the Markov property at time $m=\lfloor n/2\rfloor$ and applying \cite[Thm~5]{DeWa-15},
we obtain
\begin{align*}
&\mathbf{P}(x+S(n)=y,\tau_x>n)\\
&=\sum_{z\in K}\mathbf{P}(x+S(m)=z,\tau_x>m)\mathbf{P}(y+S'(n-m)=z,\tau'_y>n-m)\\
&=\frac{\varkappa H_0V(x)}{m^{p/2+d/2}}
\sum_{z\in K}\left(u\left(\frac{z}{\sqrt{m}}\right)e^{-\frac{\vert z\vert^2}{2m}}+o(1)\right)\mathbf{P}(y+S'(n-m)=z,\tau'_y>n-m)\\
&=\frac{\varkappa H_0V(x)}{m^{p/2+d/2}}
\mathbf{E}\left[u\left(\frac{S'(n-m)}{\sqrt{m}}\right)e^{-\frac{\vert S'(n-m)\vert^2}{2m}};\tau'_y>n-m\right]
+o\left(\frac{\mathbf{P}(\tau'_y>n-m)}{m^{p/2+d/2}}\right).
\end{align*}
Taking into account \eqref{3.step.1}, we have
\begin{align*}
&\mathbf{P}(x+S(n)=y,\tau_x>n)\\
&=\frac{\varkappa H_0V(x)}{m^{p/2+d/2}}
\mathbf{E}\left[u\left(\frac{S'(n-m)}{\sqrt{m}}\right)e^{-\frac{\vert S'(n-m)\vert^2}{2m}};\tau'_y>n-m\right]
+o\left(\frac{1+\dist(y, \partial K)}{n^{(p+d+1)/2}}\right). 
\end{align*}
Next, it follows from \eqref{1.step.1} and \eqref{1.step.2} that if $\frac{n}{\vert y^2\vert}\sim t$, then
\begin{align*}
\mathbf{E}\left[u\left(\frac{S'(n-m)}{\sqrt{m}}\right)e^{-\frac{\vert S'(n-m)\vert^2}{2m}};\tau'_y>n-m\right]
\sim \mathbf{P}(T'_y>n-m) \phi_\sigma(t/2).
\end{align*}
Since $T'_y$ is an exit time from a half-space,
\begin{equation*}
\mathbf{P}(T'_y>k)\sim v'(y)k^{-1/2},
\end{equation*}
where $v'(y)$ is the positive harmonic function for $S'$ killed at leaving the half-space $K_{\sigma}$.
As a result,
\begin{equation*}
\mathbf{P}(x+S(n)=y,\tau_x>n)
=C_0 \frac{V(x)v'(y)}{n^{(p+d+1)/2}}\phi_\sigma\left(\frac{n}{\vert y\vert^2}\right)
+o\left(\frac{1+\dist(y, \partial K)}{n^{(p+d+1)/2}}\right),
\end{equation*}
where
\begin{equation*}
C_0:=\varkappa H_0 2^{(p+d+1)/2}.
\end{equation*}
This representation implies that
\begin{align*}
&\sum_{\varepsilon\vert y\vert^2}^{A\vert y\vert^2}\mathbf{P}(x+S(n)=y,\tau_x>n)\\
&\hspace{1cm}= C_0V(x)v'(y)\sum_{\varepsilon\vert y\vert^2}^{A\vert y\vert^2}n^{-(p+d+1)/2}\phi_\sigma\left(\frac{n}{2\vert y\vert^2}\right)
+o\left(\frac{1+\dist(y, \partial K)}{n^{(p+d-1)/2}}\right)\\
&\hspace{1cm}= C_0\frac{V(x)v'(y)}{\vert y\vert^{p+d-1}}\int_\varepsilon^A\phi_\sigma(t/2) t^{-(p+d+1)/2}dt
+o\left(\frac{1+\dist(y, \partial K)}{n^{(p+d-1)/2}}\right).
\end{align*}
Combining this with \eqref{3.step} and letting $A\to\infty$, one can easily obtain
\begin{equation*}
\lim_{\vert y\vert\to\infty}\frac{\vert y\vert^{p+d-1}}{V(x)v'(y)}S_2(x,y,\varepsilon)
=C_0\int_\varepsilon^\infty \phi_\sigma(t/2) t^{-(p+d+1)/2}dt.
\end{equation*}
From Lemma \ref{thm:auxmeander} it follows
\begin{equation}
\label{S2.boundary}
\lim_{\varepsilon\to0}\lim_{\vert y\vert \to\infty}\frac{\vert y\vert^{p+d-1}}{V(x)v'(y)}S_2(x,y,\varepsilon)
=C_0\int_0^\infty \phi_\sigma(t/2) t^{-(p+d+1)/2}dt.
\end{equation}

It remains to estimate $S_1(x,y,\varepsilon)$. We shall use the same strategy as in the proof of 
Theorem~\ref{thm:deep}, but instead of the Green function for the whole space we shall use the Green
function for the half-space $K_y$. More precisely,
\begin{align*}
&S_1(x,y,\varepsilon)=\sum_{n<\varepsilon\vert y^2\vert}\mathbf{P}(x+S(n)=y,\tau_x>n\ge\theta_y)\\
&=\sum_{n<\varepsilon\vert y^2 \vert}\sum_{k=1}^n\sum_{z\in B_{\delta,y}}
\mathbf{P}(x+S(n)=z,\tau_x>k=\theta_y)\mathbf{P}(z+S(n-k)=y,\tau_z>n-k)\\
&= \sum_{k<\varepsilon\vert y\vert^2}\sum_{z\in B_{\delta,y}}
\mathbf{P}(x+S(n)=z,\tau_x>k=\theta_y)\sum_{j<\varepsilon\vert y\vert^2-k}\mathbf{P}(z+S(j)=y,\tau_z>j)\\
&\le \sum_{k<\varepsilon\vert y\vert^2}\sum_{z\in B_{\delta,y}}
\mathbf{P}(x+S(n)=z,\tau_x>k=\theta_y)\sum_{j<\varepsilon\vert y\vert^2}\mathbf{P}(y+S'(j)=z,T_y'>j)\\
&=\mathbf{E}\left[G_{\varepsilon, y}(x+S(\theta_y));\tau_x>\theta_y,\theta_y\le\varepsilon\vert y\vert^2\right],
\end{align*}
where
\begin{equation*}
G_{\varepsilon, y}(z)=\sum_{j<\varepsilon\vert y\vert^2}\mathbf{P}(y+S'(j)=z,T_y'>j).
\end{equation*}
Applying Theorem \ref{thm:half-space} and \eqref{Green_large} to the random walk $S'(n)$, we obtain
\begin{equation*}
G_{\varepsilon, y}(z)\le
C\frac{v'(y)(1+\dist(z,H_y))}{1+\vert z-y\vert^d}\wedge1.
\end{equation*}
Therefore,
\begin{multline}
\label{S1_boundary.1}
S_1(x,y,\varepsilon)
\le C\mathbf{P}(\vert y-x-S(\theta_y)\vert\le\delta^2\vert y\vert,\tau_x>\theta_y,\theta_y\le\varepsilon\vert y\vert^2)\\
+C(\delta)\frac{v'(y)}{\vert y\vert^d}\mathbf{E}\left[(1+\dist(x+S(\theta_y),H_y);
\tau_x>\theta_y,\theta_y\le\varepsilon\vert y\vert^2\right].
\end{multline}
The first term has been estimated in \eqref{S1_estim_3} for random walks having finite moments of order $r_2(p):=p+d-1+(2-p)^+$:
\begin{equation}
\label{S1_boundary.2}
     \mathbf{P}(\vert y-x-S(\theta_y)\vert\le\delta^2\vert y\vert,\tau_x>\theta_y,\theta_y\le\varepsilon\vert y\vert^2)
=o(\vert y\vert^{-p-d+1}).
\end{equation}

In order to estimate the second term in \eqref{S1_boundary.1}, we shall perform again the change
of measure with the harmonic function $V$:
\begin{multline*}
\mathbf{E}\left[(1+\dist(x+S(\theta_y),H_y);\tau_x>\theta_y,\theta_y\le\varepsilon\vert y\vert^2\right]\\
=V(x)\mathbf{E}^{(V)}\left[\frac{1+\dist(x+S(\theta_y),H_y)}{V(x+S(\theta_y))};
\theta_y\le\varepsilon\vert y\vert^2\right].
\end{multline*}
Applying now \eqref{V_low_bound}, we obtain
\begin{equation*}
\mathbf{E}\left[(1+\dist(x+S(\theta_y),H_y);\tau_x>\theta_y,\theta_y\le\varepsilon\vert y\vert^2\right]
\le CV(x)\vert y\vert^{-p+1}\mathbf{P}^{(V)}(\theta_y\le\varepsilon\vert y\vert^2).
\end{equation*}
From this estimate and \eqref{S1_estim_4}, we conclude that
\begin{equation*}
\lim_{\varepsilon\to0}\lim_{\vert y\vert\to\infty}\vert y\vert^{p-1}
\mathbf{E}\left[(1+\dist(x+S(\theta_y),H_y);\tau_x>\theta_y,\theta_y\le\varepsilon\vert y\vert^2\right]=0.
\end{equation*}
Combining this estimate with \eqref{S1_boundary.1} and \eqref{S1_boundary.2} as well as \cite[Lem.~13]{DeWa-15}, we get
\begin{equation}
\label{S1.boundary}
\lim_{\varepsilon\to0}\lim_{\vert y\vert\to\infty}\vert y\vert^{p+d-1}S_1(x,y,\varepsilon)=0.
\end{equation}
Since $v'(y)$ is bounded from below by a positive number, \eqref{S1.boundary} and \eqref{S2.boundary}
yield the desired result for the case $\e[\vert X\vert^{r_2(p)}]<\infty$ due to classical results for the one-dimensional random walk.

Assume now that \ref{local_assump} holds. It is easy to see that the above proof that 
\begin{equation}
\lim_{\varepsilon\to0}\lim_{\vert y\vert\to\infty}\frac{\vert y\vert^{p+d-1}}{V(x)v'(y)}S_2(x,y,\varepsilon)
=C_0\int_0^\infty \phi_\sigma(t) t^{-(p+d+1)/2}dt,
\end{equation}
goes through again word for word. Therefore we focus on the asymptotics of $S_1(x,y,\varepsilon)$ in the following. 
With similar steps as above it holds 
\begin{align*}
&S_1(x,y,\varepsilon)\\
&\le C(\delta)v'(y)\e\left[\frac{1+\dist(x+S(\theta_y),H_y)}{1+\vert x+S(\theta_y)-y\vert^d},\vert y-x-S(\theta_y)\vert\le\delta^2\vert y\vert,\tau_x>\theta_y,\theta_y\le\varepsilon\vert y\vert^2\right]\\&+
C(\delta)\frac{v'(y)}{\vert y\vert^d}\mathbf{E}\left[(1+\dist(x+S(\theta_y),H_y);
\tau_x>\theta_y,\theta_y\le\varepsilon\vert y\vert^2\right].
\end{align*}
The second summand can be treated just as above with help of \eqref{V_low_bound} so that we need to show
\begin{multline*}
\e\left[\frac{1+\dist(x+S(\theta_y),H_y)}{1+\vert x+S(\theta_y)-y\vert^d},\vert y-x-S(\theta_y)\vert\le\delta^2\vert y\vert,\tau_x>\theta_y,\theta_y\le\varepsilon\vert y\vert^2\right]\\
= O(\vert y\vert^{-p-d+1}).
\end{multline*}
It holds
\begin{equation*}
1+\dist(x+S(\theta_y),H_y)\le 1+\vert S(\theta_y)-y\vert+\vert y-y_{\perp}\vert= o(\vert y\vert)+ \vert S(\theta_y)-y\vert.
\end{equation*}
To complete the proof we now show for $r = d-1$ and $r=d$, 
\begin{align*}
\label{eq:helpass4}
S_{2,r}(x,y,\varepsilon)&:=\e\left[\frac{1}{1+\vert x+S(\theta_y)-y\vert^{r}},\vert y-x-S(\theta_y)\vert\le\delta^2\vert y\vert,\tau_x>\theta_y,\theta_y\le\varepsilon\vert y\vert^2\right]\\
& = o(\vert y\vert^{-p-d+1}).
\end{align*}
With a similar calculation as in the proof of Theorem \ref{thm:deep} (using \eqref{local_1}), we obtain
\begin{align*}
&\mathbf{E}\left[\frac{1}{1+\vert y-x-S(\theta_y)\vert^{d-1}};\vert y-x-S(\theta_y)\vert\le\delta^2\vert y\vert,\tau_x>\theta_y,\theta_y\le\varepsilon\vert y\vert^2\right]\\
&\hspace{1cm}\le C(\delta)\vert y\vert^{-p-d+1}f(\delta(1-\delta)\vert y\vert)\mathbf{E}[\tau_x;\tau_x<\vert y\vert^2]\sum_{m=1}^{\delta^2\vert y\vert}\frac{m^{d-1}}{m^{d-1}}\\
&\hspace{1cm} \le C(\delta)\vert y\vert^{-p-d+2}f(\delta(1-\delta)\vert y\vert)\vert y\vert^{(2-p)^+}.
\end{align*}
Finally, 
\begin{align*}
&\mathbf{E}\left[\frac{1}{1+\vert y-x-S(\theta_y)\vert^{d}};\vert y-x-S(\theta_y)\vert\le\delta^2\vert y\vert,\tau_x>\theta_y,\theta_y\le\varepsilon\vert y\vert^2\right]\\
&\hspace{1cm}\le C(\delta)\vert y\vert^{-p-d+1}f(\delta(1-\delta)\vert y\vert)\mathbf{E}[\tau_x;\tau_x<\vert y\vert^2]\sum_{m=1}^{\delta^2\vert y\vert}\frac{m^{d-1}}{m^{d}}\\
&\hspace{1cm} \le C(\delta)\log(\vert y\vert)\vert y\vert^{-p-d+2}f(\delta(1-\delta)\vert y\vert)\vert y\vert^{(2-p)^+}.
\end{align*}
This finishes the proof of Theorem \ref{thm:asymp_Green_boundary} when $K$ is $\mathcal C^{2}$.
\end{proof}

\subsection*{Exact asymptotics in the general case}

We now turn to the general convex case, without assuming that the boundary is $\mathcal C^2$.
Recall from \eqref{eq:K_rho} the definition of
\begin{equation*}
     {K}_{\rho}:=\lbrace y\in{K}:\dist(y,\partial {K})\geq \vert y\vert^{1-\rho}\rbrace,
\end{equation*}
where $\rho$ is given in Theorem \ref{thm:deep}. Further, for $y\in{K}$, $\theta_{y}=\inf\{n\geq 0:y+S'(n)\in{K}_{\rho}\}$ was introduced in \eqref{eq:stopping_time_theta}.

\begin{proof}[Proof of Theorem \ref{thm:asymp_Green_boundary} in the general case] 
Split the Green function as
\begin{align*}
G_K(x,y)&=\sum_{n=1}^{\vert y-x\vert^{2-2\epsilon}-1}\pr(y+S'(n)=x,\tau'_{y}>n)\\
&\quad+\sum_{n=\vert y-x\vert^{2-2\epsilon}}^{\infty}\pr(y+S'(n)=x,\tau'_{y}>n,\theta_{y}\geq \vert y-x\vert^{2-2\epsilon})\\
&\quad+\sum_{n=\vert y-x\vert^{2-2\epsilon}}^{\infty}\pr(y+S'(n)=x,\tau'_{y}>n,\theta_{y}\leq \vert y-x\vert^{2-3\epsilon},\vert S'_{\theta_{y}}\vert \geq \vert y-x\vert^{1-\epsilon/\alpha})\\
&\quad+\sum_{n=\vert y-x\vert^{2-2\epsilon}}^{\infty}\pr(y+S'(n)=x,\tau'_{y}>n,\theta_{y}\leq \vert y-x\vert^{2-3\epsilon}\wedge \tau'_{y},\vert S'_{\theta_{y}}\vert \leq \vert y-x\vert^{1-\epsilon/\alpha})\\
&:=T_{1}+T_{2}+T_{3}+T_{4},
\end{align*}
and we shall study successively the terms $T_1$, $T_2$, $T_3$ and $T_4$.

\subsubsection*{Study of $T_1$ and $T_2$.} 

It follows from Proposition \ref{bound_local_far_away} that $T_{1}\leq C\vert y-x\vert^{-a}$, for some parameter $a>p+q+d-2+(2-p)^{+}$. In order to analyse $T_2$, we need the preliminary estimates \eqref{fastExitCepsilon} and \eqref{smallExitCepsilon} below. To that purpose, remark that $\theta_{y}=t'_{y,\rho}(\vert y\vert^{2-2\rho})$, see \eqref{eq:t_x}. Hence, noting that $t'_{y,\epsilon}(n)$ is increasing in $n$, we get with \cite[Lem.~14]{DeWa-15}
\begin{equation}
\label{fastExitCepsilon}
     \pr(\theta_{y}>n^{1-\epsilon}, \tau'_{y}> n)\leq \pr(t'_{y,\epsilon}(n)>n^{1-\epsilon}, \tau'_{y}> n) \leq C\exp(n^{-\epsilon})
\end{equation}
for $n\geq \vert y\vert^{2-2\epsilon}$. Applying Lemma \ref{Lemma_24_revisited} to the stopping time $\theta_{y}$ and using the moment condition $\e\vert X\vert^{r(p)}<\infty$, we obtain that there exist $C>0$ and $\alpha>0$ such that 
\begin{equation}
\label{smallExitCepsilon}
     \pr\big(\vert S'_{\theta_{y}}\vert \geq \vert y-x\vert^{1-\epsilon/\alpha},\theta_{y}\leq \vert y-x\vert^{2-\epsilon}, \tau'_{y}> \vert y-x\vert^{2-\epsilon}\big)\leq C\vert y-x\vert^{-s},
\end{equation}
with $s>(2-2\epsilon)(p+q+d-4+(2-p)^{+})/2$.

Let us now write 
\begin{multline*}
T_{2}=\sum_{n\geq \vert y-x\vert^{2-2\epsilon}}\pr(x+S(n)=y,\tau_{x}>n,\theta_{y}\geq n^{1-\epsilon})\\
+\sum_{n\geq \vert y-x\vert^{2-2\epsilon}}\pr(x+S(n)=y,\tau_{x}>n,\vert y-x\vert^{2-3\epsilon}\leq  \theta_{y}\leq n^{1-\epsilon}).
\end{multline*}
By \eqref{fastExitCepsilon}, the first term is bounded by $\sum_{n\geq \vert y-x\vert^{2-2\epsilon}}C\exp(-n^{\epsilon})\leq C\exp(-\vert y-x\vert^{\epsilon'})$ for some $C>0$ and some $0<\epsilon'<\epsilon$. Moreover, by \eqref{bound_local_probability_in_cone} and \eqref{fastExitCepsilon},
\begin{align*}
&\sum_{n\geq \vert y-x\vert^{2-2\epsilon}}\pr(x+S(n)=y,\tau_{x}>n,\vert y-x\vert^{2-3\epsilon}\leq  \theta_{y}\leq n^{1-\epsilon})\\
&=\sum_{n\geq \vert y-x\vert^{2-2\epsilon}}\e[x+S(n-\theta_{y})=y+S'(\theta_{y});\tau_{x}>n-\theta_{y},\tau'_{y}>\theta_{y},\vert y-x\vert^{2-3\epsilon}\leq  \theta_{y}\leq n^{1-\epsilon}]\\
&\leq\sum_{n\geq \vert y-x\vert^{2-2\epsilon}}C(n-n^{1-\epsilon})^{-d/2-p/2}\pr(\theta_{y}>\vert y-x\vert^{2-3\epsilon},\tau_{y} \geq \vert y-x\vert^{2-3\epsilon})\\
&\leq  C\exp(-\vert y-x\vert^{\epsilon(2-3\epsilon)}),
\end{align*}
so that, finally,
\begin{equation*}
     T_{2}\leq  C\exp(-\vert y-x\vert^{\epsilon'}),
\end{equation*}
for some constant $C>0$ and $0<\epsilon'<\epsilon$.

\subsubsection*{Study of $T_3$.}
By \eqref{bound_local_probability_in_cone}, we have for $n\geq \vert x-y\vert^{2-2\epsilon}$ and $y$ large enough
\begin{align*}
\pr(y+S'(n)=x&,\tau'_{y}>n,\theta_{y}\leq \vert y-x\vert^{2-3\epsilon},\vert S'_{\theta_{y}}\vert \geq \vert y-x\vert^{1-\epsilon/\alpha})\\
&\leq \e[(n-\theta_{y})^{-p/2-d/2};\tau'_{y}>\theta_{y},\theta_{y}\leq \vert y-x\vert^{2-3\epsilon},\vert S'_{\theta_{y}}\vert \geq \vert y-x\vert^{1-\epsilon/\alpha}]\\
&\leq Cn^{-p/2-d/2}\pr(\tau'_{y}>\theta_{y},\theta_{y}\leq \vert y-x\vert^{2-3\epsilon},\vert S'_{\theta_{y}}\vert \geq \vert y-x\vert^{1-\epsilon/\alpha})\\
&\leq  Cn^{-p/2-d/2}\vert y-x\vert^{s}.
\end{align*}
Hence,
\begin{equation*}T_{3}\leq C\sum_{n=\vert x-y\vert^{2-2\epsilon}}^{\infty} n^{-p/2-d/2}\vert y-x\vert^{s}\leq \vert y-x\vert^{-(2-2\epsilon)(p/2+d/2-1)-s}.\end{equation*}
By the definition of $s$ given in \eqref{smallExitCepsilon},
\begin{align*}
     (2-2\epsilon)(p/2+d/2-1)+s&> (p+d-2)+(p+q+d-4+2(1-p/2)^{+})+f(\epsilon)\\
     &=p+q+d-2+(p+d-4+2(1-p/2)^{+})+g(\epsilon),
\end{align*}
with $g$ linear. Since $p+d-4+2(1-p/2)^{+}\geq 0$ for all $p\geq 1$ and $d\geq 2$, 
\begin{equation*}
     T_{3}=o(\vert x-y\vert^{-b}),
\end{equation*}
with $b>p+q+d-2$ for $\epsilon$ small enough.

\subsubsection*{Study of $T_4$.}
By Theorem \ref{thm:deep}, we have
\begin{align*}
T_{4}&\geq\e[G_{\vert x-y\vert^{2-2\epsilon}}(x,y+S'(\theta_{y}));\tau'_{y}>\theta_{y},\theta_{y}\leq \vert y-x\vert^{2-3\epsilon},\vert S(\theta_{y})\vert \leq \vert y-x\vert^{1-\epsilon/\alpha}]\\&\geq C\e[u( y+S'(\theta_{y}))\vert y+S'(\theta_{y})\vert^{-2p-d+2};\tau'_{y}>\theta_{y},\theta_{y}\leq \vert y-x\vert^{2-3\epsilon},\vert S(\theta_{y})\vert \leq \vert y-x\vert^{1-\epsilon/\alpha}]\\
&\geq C\vert y\vert^{-2p-d+2}\vert y\vert^{p-q''+q'+O(\epsilon)}\pr(\tau'_{y}>\theta_{y},\theta_{y}\leq \vert y-x\vert^{2-3\epsilon},\vert S(\theta_{y})\vert \leq \vert y-x\vert^{1-\epsilon/\alpha}),
\end{align*}
for some $q''>q'>q$ small enough, where we have used the fact that $y+S'(\theta_{y})\in{K}_{\epsilon}$, $\vert S(\theta_{y})\vert \leq \vert y-x\vert^{1-\epsilon/\alpha}$ and Lemma \ref{lower_bound_u_tangent} to give a lower bound on $u(y+S'(\theta_{y}))$. Hence,
\begin{equation*}T_{4}\geq \vert y\vert^{-p-q''+q'-d+2+O(\epsilon)}(\pr(\tau'_{y}>\vert y-x\vert^{2-3\epsilon})-C\exp(-\vert y\vert^{\epsilon})-K \vert y-x\vert^{s}).\end{equation*}
By Lemma \ref{lower_bound_survival_tangent}, $\pr(\tau'_{y}>\vert y-x\vert^{2-3\epsilon}) \geq C \vert y-x\vert^{-q'/2(2-3\epsilon)}$ and $s>q'/2$ for $\epsilon$ and $q'$ small enough, which yields 
\begin{equation*}T_{4}\geq c\vert y\vert^{-p-q'-d+2+O(\epsilon)}.\end{equation*}
Hence, for $\epsilon$ and $q'$ small enough,
\begin{equation*}T_{1}+T_{2}+T_{3}=o(T_{4}).\end{equation*}
Moreover, by Theorem \ref{thm:deep},
\begin{multline*}
\e[G_{K}(x,y+S'(\theta_{y}));\tau'_{y}>\theta_{y},\theta_{y}\leq \vert y-x\vert^{2-3\epsilon},\vert S(\theta_{y})\vert \leq \vert y-x\vert^{1-\epsilon/\alpha}]\\
 \sim V(x)\vert y\vert^{-2p-q+2}\e[u(y+S'(\theta_{y})),\tau'_{y}>\theta_{y},\theta_{y}\leq \vert y-x\vert^{2-3\epsilon},\vert S(\theta_{y})\vert \leq \vert y-x\vert^{1-\epsilon/\alpha}],
\end{multline*}
as $y$ goes to infinity. Since we also have 
\begin{multline*}
 \e[u(y+S'(\theta_{y}));\tau'_{y}>\theta_{y},(\theta_{y}\geq \vert y-x\vert^{2-3\epsilon})\cup(\vert S(\theta_{y})\vert \geq \vert y-x\vert^{1-\epsilon/\alpha})]\\
=o(\e[u(y+S'(\theta_{y}));\tau'_{y}>\theta_{y},\theta_{y}\leq \vert y-x\vert^{2-3\epsilon},\vert S(\theta_{y})\vert \leq \vert y-x\vert^{1-\epsilon/\alpha}])
\end{multline*}
for the same reasons as before, the result is deduced.
\end{proof}
The uniqueness of the harmonic function is then a straightforward deduction of the latter theorem together with Martin boundary theory.
\begin{corollary}
The Martin boundary of $S$ killed on the boundary of ${K}$ is reduced to a singleton, and there exists a unique harmonic function (up to multiplication by a constant).
\end{corollary}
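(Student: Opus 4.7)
The plan is to read off the corollary directly from the Green function asymptotics already established, via the classical Poisson-Martin representation. Fix once and for all a reference point $x_{0}\in K\cap\Lambda$ and consider the Martin kernel
\begin{equation*}
M(x,y):=\frac{G_{K}(x,y)}{G_{K}(x_{0},y)},\qquad x,y\in K\cap\Lambda.
\end{equation*}
The standard theory (see \cite{Sa-97}) says that positive harmonic functions for $\{S(n)\}$ killed at $\partial K$ are in bijection with positive measures on the minimal Martin boundary, so it suffices to show that every sequence $\{y_{n}\}\subset K\cap\Lambda$ with $|y_{n}|\to\infty$ satisfies $M(x,y_{n})\to V(x)/V(x_{0})$ for every fixed $x$.

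The key structural observation is that both Theorem \ref{thm:deep} and Theorem \ref{thm:asymp_Green_boundary} factorize the Green function asymptotics as $G_{K}(x,y)\sim V(x)\,\Phi(y)$, where $\Phi(y)$ depends on $y$ only (for Theorem \ref{thm:deep} it is $c\,u(y)/|y|^{2p+d-2}$; for Theorem \ref{thm:asymp_Green_boundary} it is $\mathbf{E}[u(y_{\rho}),\tau'_{y}>\theta_{y}]/|y|^{p+q+d-2}$). Given $\{y_{n}\}$, I would pass to a subsequence (still denoted $y_{n}$) along which $y_{n}/|y_{n}|$ converges to some $\bar\sigma\in\overline{\Sigma}$ and $\dist(y_{n},\partial K)/|y_{n}|$ converges in $[0,\infty)$. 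If the latter limit is strictly positive, then $y_{n}$ eventually satisfies $\dist(y_{n},\partial K)\ge\alpha|y_{n}|$ for some $\alpha>0$, and Theorem \ref{thm:deep}\ref{itA} (applicable under \ref{H:moments}) yields
\begin{equation*}
M(x,y_{n})=\frac{G_{K}(x,y_{n})}{G_{K}(x_{0},y_{n})}\longrightarrow \frac{V(x)}{V(x_{0})}.
\end{equation*}
If instead the ratio tends to zero, then $\bar\sigma\in\partial\Sigma$ and Theorem \ref{thm:asymp_Green_boundary} gives the same conclusion, because the $y_{n}$-dependent prefactor cancels in the ratio.

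Since the Martin compactification is compact, the above shows that every convergent subsequence of $\{M(\cdot,y_{n})\}$ converges to the same limit $V(\cdot)/V(x_{0})$; hence the full sequence converges. The Martin boundary therefore reduces to a singleton, identified with the harmonic function $V$ from \eqref{definition_V(x)}. By Poisson-Martin representation, any positive harmonic function for $\{S(n)\}$ killed at $\partial K$ is a non-negative constant multiple of $V$. In the $\mathcal{C}^{2}$ case, one simply replaces the invocation of \ref{H:moments} by the local bound \ref{local_assump}, which is what the refined versions of Theorems \ref{thm:deep} and \ref{thm:asymp_Green_boundary} allow. There is no real obstacle here: the content has been absorbed into the product-form Green asymptotics, and what remains is the standard Martin boundary dictionary.
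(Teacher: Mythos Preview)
Your proposal is correct and follows essentially the same approach as the paper: both argue that the Martin kernel $G_K(x,y_n)/G_K(x_0,y_n)$ converges to $V(x)/V(x_0)$ by invoking Theorems~\ref{thm:deep} and~\ref{thm:asymp_Green_boundary}, whose product form $G_K(x,y)\sim V(x)\Phi(y)$ makes the $y$-dependence cancel in the ratio. The paper's proof is a terse two-line version of yours; your added subsequence dichotomy (interior versus boundary direction of $y_n/|y_n|$) makes explicit what the paper leaves implicit, but neither the idea nor the tools differ.
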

\begin{proof}
Let $x_{0},x\in{K}$ and let $(y_{n})$ be a sequence in ${K}$ going to infinity. Then, by Theorems \ref{thm:deep} and \ref{thm:asymp_Green_boundary}, as $n\to \infty$,
\begin{equation*}
     \frac{G_K(x,y_{n})}{G_K(x_{0},y_{n})}\to\frac{V(x)}{V(x_{0})}.
\end{equation*}
The Martin boundary is thus reduced to a singleton.
\end{proof}

\section{Optimality of the moment conditions}
\label{sec:optimal}

In this section, we prove that the assumptions of Theorems \ref{thm:deep} and \ref{thm:asymp_Green_boundary} are optimal.

Uchiyama~\cite{Uc-98} has shown, see Theorem 2 there, that if $d\ge 5$ and
$\mathbf{E}\vert X\vert^{d-2}<\infty$, then
\begin{equation*}
     G_{{\bf R}^d}(0,z)\sim\frac{c}{\vert z\vert^{d-2}}
\end{equation*}
as $\vert z\vert\to\infty$. The same asymptotics is valid when $d=4$ or $d=3$, provided that respectively
$\mathbf{E}\vert X\vert^2\log\vert X\vert<\infty$ or $\mathbf{E}\vert X\vert^2<\infty$.

Uchiyama mentions also that this moment condition is optimal:
for any $\varepsilon>0$, there exists a random walk satisfying
$\mathbf{E}\vert X\vert^{d-2-\varepsilon}<\infty$ and 
\begin{equation*}
     \limsup_{\vert z\vert\to\infty}\vert z\vert^{d-2}G_{{\bf R}^d}(0,z)=\infty.
\end{equation*}
Uchiyama considers dimensions $4$ and $5$ only, but it is quite simple
to show that this statement holds in every dimension $d\ge5$. 
We now give an example in our setting of a random walk which shows the optimality of 
Uchiyama's condition and of the moment condition in Theorem~\ref{thm:deep}.
Our example is just a multidimensional variation
of the classical Williamson example, see \cite{Williamson68}.

Let $d$ be greater than $4$ and consider $X$ with the following distribution.
For every $n\ge1$ and for every basis vector $e_k$ put 
\begin{equation*}
\mathbf{P}(X=\pm 2^n e_k)=\frac{q_n}{2d},
\end{equation*}
where the sequence $q_n$ is such that 
\begin{equation*}
\sum_{n=1}^\infty q_n=1\quad\text{and}\quad
q_n\sim \frac{c\log n}{2^{n(d-2)}}.
\end{equation*}
Clearly,
\begin{equation*}
\mathbf{E}\vert X\vert^{d-2}=\infty\quad\text{and}\quad
\mathbf{E}\frac{\vert X\vert^{d-2}}{\log^{1+\varepsilon}\vert X\vert}<\infty.
\end{equation*}
Using now the obvious inequality $G_{{\bf R}^d}(0,x)\ge\mathbf{P}(X=x)$,
we conclude that for every $j=1,\ldots,d$,
\begin{equation*}
\lim_{n\to\infty}2^{(d-2)n}G_{{\bf R}^d}(0,\pm 2^n e_j)=\infty.
\end{equation*}

If we have a cone $K$ such that $p\ge2$ and $e_j\in\Sigma$ for some $j$, then,
choosing $q_n\sim\frac{c\log n}{2^{n(p+d-2)}}$, we also have
\begin{equation*}
\lim_{n\to\infty}2^{(p+d-2)n}G_{K}(e_j,(1+2^n)e_j)=\infty.
\end{equation*}
Therefore, the finiteness of $\e\vert X(1)\vert^{r_1(p)}$ cannot be replaced by a weaker moment assumption.

But Uchiyama shows that the moment assumption $\mathbf{E}\vert X\vert^{d-2}$ is not necessary, as it can be replaced by $\mathbf{P}(X=x)=o(\vert x\vert^{-d-2})$, which implies the existence of the second moment
only. In Theorem~\ref{thm:deep} we have a similar situation: the moment condition $\mathbf{E}\vert X\vert^{r_1(p)}<\infty$
is not necessary and can be replaced by the assumption \ref{local_assump}, which yields the finiteness
of $\mathbf{E}\vert X\vert^{p\vee2}$ only. It has been shown in \cite{DeWa-15} that if $p>2$, the condition
$\mathbf{E}\vert X\vert^{p}<\infty$ is an optimal moment condition for the existence of the harmonic function
$V(x)$. 

Clearly, one can adapt the random walk from the example above to show that the moment 
assumption in the second statement of Theorem~\ref{thm:asymp_Green_boundary} is minimal as well. Indeed, it suffices to take 
$q_n\sim\frac{c\log n}{2^{n(p+d-1)}}$ and to assume that one of the vectors $\pm e_j$ belongs to the 
boundary of the cone $K$.

In order to show that the moment conditions
in the first claim of Theorem~\ref{thm:asymp_Green_boundary} are nearly minimal we consider the cone $K={\bf R}_+^d$, $d\ge3$. Clearly, $p=d$ for this cone. Set $\sigma=(1,0,0,\ldots,0)$. Then one has $K_\sigma={\bf R}\times{\bf R}_+^{d-1}$ and $q_\sigma=d-1$. We assume again that
\begin{equation*}
\mathbf{P}(X=\pm 2^ne_k)=\frac{q_n}{2d}.
\end{equation*}
This time we choose $q_n\sim c\frac{\log n}{2^{3(d-1)n}}$. Denoting by $\mathbf{1}$ the vector $e_1+\cdots +e_d$, we obtain that as $n\to\infty$,
\begin{equation*}
     G_K(\mathbf{1},\mathbf{1}+2^ne_1)\gg 2^{-3(d-1)n}. 
\end{equation*}
Moreover, it is rather simple to see that $\e[u(y_{\rho}),\tau'_{y}>\theta_{y}]$ converges to a positive constant for $\mathbf{1}+2^ne_1$. As a result, the first statement may fail for a random walk with $\mathbf{E}\vert X\vert^{3d-3}=\infty$. Remark that the first statement requires not only finiteness of moment of order $p+q_{\sigma}+d-2+(2-p)^+$, but also finiteness of some moment strictly greater than $p+q_{\sigma}+d-2+(2-p)^+$. We conjecture that this condition is actually sharp when $d\geq 3$.

\section{Boundary asymptotics of the survival probability}
\label{sec:survival}

The goal of this section is to collect lower bounds on the survival probability at time $n\geq 1$ of the random walk starting at $x$ when $n=o(\vert x\vert^{2})$ and $x\rightarrow \infty$ while $\frac{x}{\vert x\vert}\rightarrow \sigma\in \partial K$. Those bounds are used in the proof of our main results. The strategy of the proof is to compare the tangent cone at $\sigma$ with some smaller cones included in $K$. Let us give a first recall a useful result from \cite[Lem.~21]{RaTa-18}. 
\begin{lemma}\label{Lemma_24_revisited}
Let $0\leq r\leq p$ and $A>0$, and suppose that the increment $X$ admits moments of order $\kappa>r$. Set 
\begin{equation*}
     S(x,n)^{+}=\sup_{1\leq \ell\leq n^{1-\epsilon}}
\vert S(\ell)\vert \mathbf{1}_{\tau_{x}>\ell}.
\end{equation*}
Then, for each $s<(\kappa-r)/2$ and $\beta\in ((p/2-1)\wedge 0,p/2)$, there exists $C>0$ such that
\begin{equation*}
     \e\bigl[(S(x,n)^{+})^{r};S(x,n)^{+}\geq n^{1/2-\epsilon/8}\bigr]\leq Cn^{-s}n^{1-(p/2-\beta)}(1+\vert x\vert)^{p-2\beta}\end{equation*}
for all $x\in K$. In particular, uniformly on $x\in K$, $\vert x\vert\leq A\sqrt{n}$,
\begin{equation*}
     \e\bigl[(S(x,n)^{+})^{r};S(x,n)^{+}\geq n^{1/2-\epsilon/8}\bigr]\leq Cn^{-s+1}.
\end{equation*}
\end{lemma}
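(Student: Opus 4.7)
The statement is a refinement of \cite[Lem.~21]{RaTa-18} that makes the dependence on $\vert x\vert$ and on the free moment order $\kappa$ explicit, so my plan is to rerun the proof of that lemma while tracking these parameters.

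First, I would convert the expectation to a tail integral, yielding
\begin{equation*}
\e[(S(x,n)^+)^r; S(x,n)^+ \geq n^{1/2-\epsilon/8}] \leq C n^{r(1/2-\epsilon/8)} \pr(S(x,n)^+ \geq n^{1/2-\epsilon/8}) + r\int_{n^{1/2-\epsilon/8}}^{\infty} t^{r-1} \pr(S(x,n)^+ \geq t)\, dt,
\end{equation*}
which reduces the problem to pointwise tail bounds on $S(x,n)^+$ at levels $t \geq n^{1/2-\epsilon/8}$.

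Next, I would split the event $\{S(x,n)^+ \geq t\}$ according to whether any of the first $n^{1-\epsilon}$ increments exceeds $t/n^{\epsilon/2}$ in modulus. On the no-big-jump event, Fuk--Nagaev as in \cite[Cor.~23]{DeWa-15} produces superpolynomial decay in $n$ once $t \geq n^{1/2-\epsilon/8}$, giving a negligible contribution. On the big-jump event, I would apply the strong Markov property at the first jump exceeding $t/n^{\epsilon/2}$, insert the survival bound $\pr(\tau_x > \ell) \leq C(1+\vert x\vert)^{p-2\beta} \ell^{-(p/2-\beta)}$ from \cite[Lem.~14]{DeWa-15}, and use Markov's inequality at exponent $\kappa$ for the jump itself, obtaining
\begin{equation*}
\pr(S(x,n)^+ \geq t,\,\text{big jump}) \leq C(1+\vert x\vert)^{p-2\beta}\,(t/n^{\epsilon/2})^{-\kappa}\,\e\vert X\vert^\kappa \sum_{\ell=1}^{n^{1-\epsilon}} \ell^{-(p/2-\beta)}.
\end{equation*}

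The main obstacle is combining all $n$-factors cleanly. Bounding the harmonic-type sum by a constant times $1+n^{(1-\epsilon)(1-(p/2-\beta))}$ and plugging the result into the tail integral, together with $\int_{n^{1/2-\epsilon/8}}^\infty t^{r-1-\kappa}\, dt = O(n^{(1/2-\epsilon/8)(r-\kappa)})$, produces an overall estimate of order
\begin{equation*}
C(1+\vert x\vert)^{p-2\beta}\,n^{1-(p/2-\beta)}\,n^{-(\kappa-r)/2 + O(\epsilon)}.
\end{equation*}
This matches the claim: for any $s<(\kappa-r)/2$ one chooses $\epsilon$ so small that the $O(\epsilon)$ error is dominated by $(\kappa-r)/2 - s$, at which point the factor $n^{-(\kappa-r)/2+O(\epsilon)}$ is absorbed into $n^{-s}$. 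The uniform bound for $\vert x\vert\leq A\sqrt n$ then follows by substituting $(1+\vert x\vert)^{p-2\beta}\leq C n^{(p-2\beta)/2}$, whereupon the surviving exponent is $-s + 1 - (p/2-\beta) + (p-2\beta)/2 = -s+1$, as stated.
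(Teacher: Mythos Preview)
The paper does not prove this lemma; it merely recalls it from \cite[Lem.~21]{RaTa-18} at the start of Section~\ref{sec:survival}. So there is no in-paper proof to compare against. Your plan---tail-integral representation, big-jump splitting with Fuk--Nagaev on the bounded-increment part, and Markov's inequality at order $\kappa$ combined with a survival bound on the big-jump part---is exactly the route taken in \cite{RaTa-18} (itself modelled on \cite[Lem.~24]{DeWa-15}), and the bookkeeping of exponents you sketch is correct.

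Two small points. First, the interpolated survival estimate $\pr(\tau_x>\ell)\le C(1+\vert x\vert)^{p-2\beta}\ell^{-(p/2-\beta)}$ is not literally \cite[Lem.~14]{DeWa-15}; it is obtained by interpolating the trivial bound $\pr(\tau_x>\ell)\le1$ with the uniform estimate $\pr(\tau_x>\ell)\le C(1+\vert x\vert)^p\ell^{-p/2}$ (raise the latter to the power $1-2\beta/p$). Second, your final step ``choose $\epsilon$ small enough so that the $O(\epsilon)$ error is dominated by $(\kappa-r)/2-s$'' presupposes that $\epsilon$ may depend on $s$. As the lemma is stated, $\epsilon$ is fixed in the definition of $S(x,n)^+$ and in the threshold $n^{1/2-\epsilon/8}$, while $s$ ranges over $(-\infty,(\kappa-r)/2)$. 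Tracking exponents, one finds an unavoidable loss of order $\epsilon(\kappa-r)/8$ coming from $M^{r-\kappa}$ at $M=n^{1/2-\epsilon/8}$, which is only compensated by the gain $\epsilon(1-(p/2-\beta))$ from the $(1-\epsilon)$ in the sum. When $\kappa-r>8(1-p/2+\beta)$ this compensation fails, and the bound holds only for $s$ bounded away from $(\kappa-r)/2$ by an $\epsilon$-dependent margin. This is a quantifier-order issue in the statement rather than a flaw in your argument; in every application in the paper a specific $s$ strictly below $(\kappa-r)/2$ is used, and one is free to take $\epsilon$ small enough at the outset.
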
 

Recall from \ref{H:strongly_irreducible} that a random walk $S$ is strongly irreducible in a cone $K$ if there exists a constant $R>\text{diam }\Lambda$ such that for any $z\in C\cap \Lambda$, there exists a path with positive probability in $K\cap B(z,R)$ which starts in $z+K$ and ends at $z$. If $K$ is a cone with exponent $q$ such that $S$ is strongly irreducible in $K$, then there exists $c>0$ such that for all $z\in{K}$ and all $n\geq 1$,
\begin{equation}\label{lowerBoundProbSurv}
\pr(\tau_{z}>n)\geq cn^{-q/2}.
\end{equation}
See \cite[Lem.~13]{RaTa-18} for a proof of this fact.

We now prove that a tangent cone can be well approximated by a smaller cone included in the original cone. We recall that $K_{\sigma}$ denotes the tangent cone to $K$ at $\sigma$, see \eqref{eq:def_K_sigma}, and for $\alpha>0$ we set 
\begin{equation*}
     K_{\sigma,\alpha}=\lbrace x\in K_{\sigma}: \alpha\vert x-\sigma\vert\leq \dist(x,\partial K_{\sigma})\rbrace.
\end{equation*}
Notice that for $\alpha$ small enough, $K_{\sigma,\alpha}$ is a non-empty cone. For $\epsilon>0$, let
\begin{equation*}
     V_{\epsilon}(\sigma)=B(\sigma,\epsilon)\cap K\quad \text{and}\quad \partial V_{\epsilon}(\sigma)=B(\sigma,\epsilon)\cap \partial K.
\end{equation*}
Hereafter, $(z-\sigma)+K_{\sigma,\alpha}$ denotes the translated version of $K_{\sigma,\alpha}$ with origin at $z$.
\begin{lemma}\label{inclusion_smaller_cone}
For all $\alpha>0$ sufficiently small, there exist $\epsilon,\alpha'>0$ such that for all $y\in \partial V_{\epsilon}(\sigma)$ and all $z\in (y-\sigma)+K_{\sigma,\alpha}\cap B(\sigma,\epsilon)$, one has $z\in K$ and
\begin{equation*}\dist(z,\partial K)\geq\alpha'\vert z-y\vert.\end{equation*}
\end{lemma}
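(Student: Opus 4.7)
The plan is to compare the translated inner cone $(y-\sigma)+K_{\sigma,\alpha}$ to the tangent cone $K_\sigma$, and then to transfer this comparison to $K$ by using that $K$ approximates $K_\sigma$ in a scale-invariant way near $\sigma$. Three facts are needed: (i) convexity of $K$ gives $K\subset K_\sigma$ and also ensures that $K_\sigma$ is translation invariant along $\mathbb{R}\sigma$; (ii) by construction $K_{\sigma,\alpha}$ is a convex sub-cone of $K_\sigma$ with apex $\sigma$, invariant under the dilations $x\mapsto\sigma+\lambda(x-\sigma)$ for $\lambda>0$, along which
\begin{equation*}
    \dist(x,\partial K_\sigma)\ge\alpha|x-\sigma|\qquad\text{for every }x\in K_{\sigma,\alpha},
\end{equation*}
and which has non-empty interior as long as $\alpha$ is smaller than the inradius $\alpha_{\max}>0$ of the spherical section of $K_\sigma$ seen from $\sigma$; and (iii) the standard tangent-cone approximation for convex sets: for every $\eta>0$ there exists $\epsilon_\eta>0$ such that
\begin{equation*}
    \dist(x,K)\le\eta|x-\sigma|\qquad\text{for every }x\in K_\sigma\cap B(\sigma,\epsilon_\eta),
\end{equation*}
and symmetrically $\dist(y,\partial K_\sigma)\le\eta|y-\sigma|$ for every $y\in\partial K\cap B(\sigma,\epsilon_\eta)$.

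Then I would fix $\alpha':=\alpha/3$, take $\eta:=\alpha/3$, and set $\epsilon:=\epsilon_\eta$. For $y\in\partial V_\epsilon(\sigma)$ and $z\in((y-\sigma)+K_{\sigma,\alpha})\cap B(\sigma,\epsilon)$, the change of variable $x:=z-(y-\sigma)$ places $x\in K_{\sigma,\alpha}$ with $|x-\sigma|=|z-y|$, so by (ii) and translation invariance of the distance to a set,
\begin{equation*}
    \dist\bigl(z,(y-\sigma)+\partial K_\sigma\bigr)=\dist(x,\partial K_\sigma)\ge\alpha|z-y|.
\end{equation*}
Since $y\in\partial K$ lies within $\eta|y-\sigma|$ of $\partial K_\sigma$ by (iii), the translated boundary $(y-\sigma)+\partial K_\sigma$ differs from $\partial K$ on $B(\sigma,\epsilon)$ by at most $C\eta\cdot\max(|y-\sigma|,|z-\sigma|)$ in Hausdorff distance, which gives
\begin{equation*}
    \dist(z,\partial K)\ge\alpha|z-y|-C\eta\max(|z-y|,|y-\sigma|),
\end{equation*}
at least $\alpha'|z-y|$ as soon as $|z-y|$ dominates $|y-\sigma|$ by a fixed constant. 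In the remaining regime $|z-y|\ll|y-\sigma|$, I would argue instead that the segment $[y,z]$ lies in $(y-\sigma)+K_{\sigma,\alpha}$ by convexity and that the direction $(z-y)/|z-y|$ belongs to the angular interior of $K_\sigma$ at $\sigma$ with margin $\alpha$, hence to the interior of the tangent cone at $y$ once $y$ is close enough to $\sigma$, forcing $z\in K$ with the required distance bound.

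The main obstacle is the quantitative uniformity in (iii), in particular the fact that the tangent cone at any nearby boundary point $y\in\partial K\cap B(\sigma,\epsilon)$ is itself close to $K_\sigma$ with the same $\eta$-margin, so that the inner-direction property of $K_{\sigma,\alpha}$ can be transferred from $\sigma$ to $y$. This is a standard consequence of the Painlev\'e--Kuratowski convergence of the dilates $t^{-1}(K-\sigma)$ to $K_\sigma-\sigma$ as $t\downarrow 0$ and of the upper-hemicontinuity of supporting half-spaces along the boundary of a closed convex body. Once these are in place, the rest is the triangle-inequality estimate above, carried out at the natural scale $|z-y|$, and the uniformity in $y\in\partial V_\epsilon(\sigma)$ follows because everything depends on $y$ only through $|y-\sigma|$ and its angular position in the compact set $\partial\Sigma\cap\bar B(\sigma,\epsilon)$.
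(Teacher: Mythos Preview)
Your approach is in the right spirit---reduce to the tangent cone and transfer back to $K$---but the execution has a real gap, and the two-regime split is a symptom of it.

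The inference from your (iii) to the Hausdorff bound is not justified. Knowing that $y\in\partial K$ lies within $\eta|y-\sigma|$ of $\partial K_\sigma$ does not imply that the translated boundary $(y-\sigma)+\partial K_\sigma$ is within $C\eta\max(|y-\sigma|,|z-\sigma|)$ of $\partial K$. The translated cone has apex at $y$, and near $y$ the set $K$ is approximated by its tangent cone $K_y$ \emph{at $y$}, not by the tangent cone at $\sigma$. So the relevant comparison is between $K_\sigma$ and $K_y$ as cones, and this is precisely the ``quantitative uniformity'' you name as the main obstacle but do not prove. Without it, the error term you obtain scales with $|y-\sigma|$ rather than $|z-y|$, and you are forced into the regime $|z-y|\ll|y-\sigma|$, where your sketch (``the direction $(z-y)/|z-y|$ belongs to the interior of the tangent cone at $y$ once $y$ is close enough to $\sigma$'') again presupposes exactly the uniform closeness of $K_y$ to $K_\sigma$ that remains unproved.

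The paper's proof handles this differently and avoids the regime split entirely. It represents $\partial K$ locally as the graph of a convex function $\phi$ over a hyperplane, and proves as a preliminary lemma the uniform upper-semicontinuity of directional derivatives: for every $\epsilon>0$ there is a neighborhood $V$ of $0$ such that $\phi_s(u)\le\phi_s(0)+\epsilon$ for all $u\in V$ and all unit directions $s$. This is exactly the one-sided quantitative closeness of $K_y$ to $K_\sigma$ you need. Integrating it along the segment $[y_1,z_1]$ gives $\phi(z_1)-\phi(y_1)\le\phi_{z_1-y_1}(0)+\tfrac{\alpha'}{2}|z_1-y_1|$, which combined with the defining inequality for $z\in y+\tilde K_{\alpha'}$ yields $z_2\ge\phi(z_1)+\tfrac{\alpha'}{2}|z_1-y_1|$; the distance bound then follows from the Lipschitz constant of $\phi$. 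The error is automatically of order $|z-y|$, not $|y-\sigma|$, so no case distinction is needed.

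In short: the missing ingredient is to make the upper-hemicontinuity of supporting half-spaces quantitative and uniform over directions, and to apply the resulting bound \emph{at $y$} rather than at $\sigma$. Once you do this (for instance via the graph representation and subdifferential upper-semicontinuity), the argument reduces to a single integration, and your Painlev\'e--Kuratowski invocation becomes an honest lemma rather than a placeholder.
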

The proof of the above lemma uses a few basic facts from convex analysis. Recall that for a convex function $\phi: C\rightarrow {\bf R}$ defined on an open convex set $C\subset {\bf R}^{d-1}$, we define the subgradient $\partial \phi (x)$ of $\phi$ at $x\in C$ by 
\begin{equation*}
     \partial \phi (x)=\lbrace v\in {\bf R}^{d-1}: \forall u\in C,\,\phi(u)-\phi(x)\geq \langle v,u-x\rangle\rbrace.
\end{equation*}
The subgradient is upper-semicontinuous in the following sense: if $x_{n}\rightarrow x$ and $v_{n}\rightarrow v$ with $v_{n}\in \partial\phi(x_{n})$ for any $n$, then $v\in \partial\phi(x)$. 

For $s\in{\bf R}^{d-1}$, the convex function $\phi$ admits a directional derivative $\phi_{s}(x)$ at any point $x\in C$, and we have
\begin{equation*}
     \phi_{s}(x)=\max_{v\in \partial \phi(x)}\langle v,s\rangle.
\end{equation*}
Note that the upper-semicontinuity of the subgradient implies a uniform upper-semicontinuity of the directional derivatives.
\begin{lemma}\label{upper_continuity_derivative}
Let $x\in C$ and $\epsilon>0$. There exists a neighborhood $V$ of $x$ such that 
\begin{equation*}
     \phi_{s}(u)\leq \phi_{s}(x)+\epsilon
\end{equation*}
for all $u\in V$ and $s\in {\bf S}^{d-2}$.
\end{lemma}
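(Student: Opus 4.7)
I would argue by contradiction and compactness, leveraging the two ingredients already recalled just before the lemma: the representation $\phi_s(u)=\max_{v\in\partial\phi(u)}\langle v,s\rangle$ and the upper semicontinuity of the subgradient. Suppose the conclusion fails. Then there exist sequences $u_n\to x$ and $s_n\in \mathbf{S}^{d-2}$ such that
\begin{equation*}
\phi_{s_n}(u_n)>\phi_{s_n}(x)+\varepsilon.
\end{equation*}
For each $n$, pick $v_n\in\partial\phi(u_n)$ realizing the maximum, i.e.\ $\phi_{s_n}(u_n)=\langle v_n,s_n\rangle$.

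The next step is to use that $\partial\phi$ is locally bounded near $x$. This is a standard fact for convex functions on an open convex set: $\phi$ is locally Lipschitz near $x$, and the subgradients are bounded in norm by the local Lipschitz constant. Hence $(v_n)$ is bounded, and $(s_n)$ lies on the compact sphere; after extracting subsequences we may assume $v_n\to v$ and $s_n\to s$ with $|s|=1$. The upper semicontinuity of the subgradient (which is exactly the property recalled right before the lemma) yields $v\in\partial\phi(x)$, and therefore
\begin{equation*}
\langle v,s\rangle\le\max_{w\in\partial\phi(x)}\langle w,s\rangle=\phi_s(x).
\end{equation*}

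To close the loop I need to show $\phi_{s_n}(x)\to\phi_s(x)$. This follows from the representation $\phi_s(x)=\max_{w\in\partial\phi(x)}\langle w,s\rangle$ together with the fact that $\partial\phi(x)$ is a (nonempty, convex) compact subset of $\mathbf{R}^{d-1}$: the function $s\mapsto\max_{w\in\partial\phi(x)}\langle w,s\rangle$ is the support function of a compact set and is therefore continuous on $\mathbf{S}^{d-2}$. Passing to the limit in the inequality $\langle v_n,s_n\rangle>\phi_{s_n}(x)+\varepsilon$ gives $\langle v,s\rangle\ge\phi_s(x)+\varepsilon$, contradicting the bound $\langle v,s\rangle\le\phi_s(x)$ from the previous paragraph.

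The only mild subtlety is the local boundedness of $\partial\phi$; everything else is a routine extraction-plus-semicontinuity argument. In principle one could avoid the contradiction format by proving directly that $(u,s)\mapsto\phi_s(u)$ is upper semicontinuous as a function of two variables and invoking a standard compactness argument on the compact set $\{x\}\times\mathbf{S}^{d-2}$, but the contradiction route above is the cleanest, and is exactly parallel to the one-variable upper semicontinuity statement that the authors already used.
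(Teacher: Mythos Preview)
Your proof is correct and follows essentially the same approach as the paper: contradiction, extraction of convergent subsequences for $s_n$ and the maximizing subgradients $v_n$ (using local boundedness of $\partial\phi$), upper semicontinuity of the subgradient to place the limit in $\partial\phi(x)$, and continuity of $s\mapsto\phi_s(x)$ to reach a contradiction. Your write-up is in fact slightly more explicit than the paper's about why the support function $s\mapsto\phi_s(x)$ is continuous and why $\partial\phi$ is locally bounded.
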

\begin{proof}
Let us prove the statement by contradiction. Assume the existence of a sequence $(x_{n},s_{n})$ in $C\times {\bf S}^{d-2}$ such that $x_{n}\rightarrow x$ and $\phi_{s_{n}}(x_{n})> \phi_{s_{n}}(x)+\epsilon$. Up to taking a subsequence, we can assume that $s_{n}\rightarrow s\in {\bf S}^{d-2}$. For each $n$, let $v_{n}$ be the maximizer of $\langle v,s_{n}\rangle$ for $v\in \partial\phi_{x_{n}}$. Since $\partial \phi$ is uniformly bounded on a neighborhood of $x$, we can assume by compactness that $v_{n}$ converges to a vector $v_{0}$. By upper-semicontinuity of $\partial \phi$, one has $v_{0}\in \partial \phi(x)$. Then we have
\begin{equation}\label{contradiction}
     \phi_{s_{n}}(x_{n})=\langle v_{n},s_{n}\rangle \rightarrow \langle v_{0},s\rangle\leq \max_{v\in\partial \phi(x)}\langle v,s\rangle\leq \phi_{s}(x),
\end{equation}
and on the other hand
\begin{equation*}
     \langle v_{n},s_{n}\rangle= \phi_{s_{n}}(x_{n})\geq \phi_{s_{n}}(x)+\epsilon.
\end{equation*}
Since $s\mapsto \phi_{s}(x)$ is continuous and $s_{n}\rightarrow s$, $\phi_{s_{n}}(x_{n})\geq \phi_{s}(x) +\epsilon/2$ for $n$ large enough, and by \eqref{contradiction} we get a contradiction.
\end{proof}
\begin{proof}[Proof of Lemma \ref{inclusion_smaller_cone}]
Up to an isometry of ${\bf R}^{d}$, we can assume $\sigma=0$ and that $(0,\ldots,0,1)$ is a vector pointing inside $K$. Let $V$ be a neighborhood of $0$ in $H_{d}:=\lbrace x\in {\bf R}^{d}:x_{d}=0\rbrace$ such that there exists a convex function $\phi : V\rightarrow {\bf R}$ with Lipschitz constant $M$ whose graph is locally the boundary of $K$ around $\sigma$. We further assume that there exists $\epsilon>0$ such that
\begin{equation*}
     \lbrace (y,t)\in V\times {\bf R}:\phi(y)<t<\phi(y)+\epsilon\rbrace \subset K.
\end{equation*}
Such $\epsilon$ always exists if we assume $V$ small enough.

Note that the tangent cone of $K$ at $\sigma$ is exactly the set 
\begin{equation*}
     K_{\sigma}=\lbrace (y,x_{d})\in {\bf R}^{d-1}\times {\bf R}: x_{d}\geq  \phi_{y}(0)\rbrace.
\end{equation*}
Let $\alpha$ be small enough so that $K_{\sigma,\alpha}$ is non-empty. For $\beta>0$, set
\begin{equation*}
     \tilde{K}_{\beta}:=\lbrace  (y,x_{d})\in {\bf R}^{d-1}\times {\bf R}: x_{d}\geq  \phi_{y}(0)+\beta\vert y\vert\rbrace.\end{equation*}
Then, $(\tilde{K}_{\beta})_{\beta>0}$ is a decreasing sequence of cones and $\bigcup_{\beta>0} \tilde{K}_{\beta}=K_{\sigma}$, hence there exists $\alpha'>0$ such that $K_{\sigma,\alpha}\subset \tilde{K}_{\alpha'}$. By Lemma \ref{upper_continuity_derivative}, let $\epsilon'<\epsilon$ be such that $B_{{\bf R}^{d-1}}(0,\epsilon')\subset V$ is a neighborhood of $0$, with the property that for each $y\in B_{{\bf R}^{d-1}}(0,\epsilon')$ and $s\in{\bf S}^{d-2}$, we have 
\begin{equation}
\label{bound_directional}
     \phi_{s}(y)\leq \phi_{s}(0)+\alpha'/2.
\end{equation}
Let $z\in y+\tilde{K}_{\alpha'}\cap B(y,\epsilon'/2)$ with $y=(y_{1},\phi(y_{1}))\in\partial K$ and $y_{1}\in B_{{\bf R}^{d-1}}(0,\epsilon'/2)$. Writing $z=(z_{1},z_{2})\in {\bf R}^{d-1}\times {\bf R}$, we have on the first hand 
\begin{equation*}
     z_{2}-\phi(y_{1})\geq \phi_{z_{1}-y_{1}}(0)+\alpha'\vert z_{1}-y_{1}\vert.
\end{equation*}
On the other hand, integrating \eqref{bound_directional} on the segment $[y_{1},z_{1}]\subset B_{{\bf R}^{d-1}}(0,\epsilon')$ yields
\begin{equation*}
     \phi(z_{1})-\phi(y_{1})=\int_{0}^{1}\phi_{z_{1}-y_{1}}(y_{1}+t(z_{1}-y_{1}))dt
\leq \phi_{z_{1}-y_{1}}(0)+\alpha'/2\vert z_{1}-y_{1}\vert\leq z_{2}-\phi(y_{1}).
\end{equation*}
Hence, $z_{2}\geq \phi(z_{1})+\alpha'/2\vert z_{1}-y_{1}\vert$, which yields 
\begin{equation}\label{minimum_slope}
\phi(z_{1})+\alpha'/2\vert z_{1}-y_{1}\vert<z_{2}<\phi(z_{1})+\epsilon/2
\end{equation}
by the choice of $\epsilon'$. Since $z_{1}\in V$, $(z_{1},u)\in K$ for all $u\in (\phi(z_{1}),\phi(z_{1})+\epsilon)$, which implies that $(z_{1},z_{2})\in K$. Therefore, for $y\in \partial V_{\epsilon'/2}(\sigma)$ we have $(y-\sigma)+\tilde{K}_{\alpha'}\cap B(\sigma,\epsilon'/2)\subset K$. Since $K_{\sigma,\alpha}\subset\tilde{K}_{\alpha'}$, we also have $(y-\sigma)+K_{\sigma,\alpha}\cap B(\sigma,\epsilon'/2)\subset K$ for all $y\in \partial V_{\epsilon'/2}(\sigma)$. 

Since $\phi$ is Lipschitz with Lipschitz constant $M>0$ on  $B_{{\bf R}^{d-1}}(0,\epsilon')$, standard geometric arguments yield that for $c=\sin(\arctan(1/M))$,
\begin{equation*}d(z,\partial K)\geq c(z_{2}-\phi(z_{1})),\end{equation*}
when  $z=(z_{1},z_{2})\in K$ with $z_{1}\in B_{{\bf R}^{d-1}}(0,\epsilon'/2)$ and $z_{2}\leq \epsilon'/2$. Thus, \eqref{minimum_slope} yields that 
\begin{equation}\label{first_coordinate}
d(z,\partial K)\geq  \tfrac{c\alpha'}{2} \vert z_{1}-y_{1}\vert.
\end{equation}
Since the Lipschitz property also yields $\vert z_{1}-y_{1}\vert\geq \vert \phi(z_{1})-\phi(y_{1})\vert/M$, we deduce that
\begin{equation*}
     d(z,\partial K)\geq \tfrac{c\alpha'}{2M}\vert \phi(z_{1})-\phi(y_{1})\vert.
\end{equation*}
Hence, for $c'=\min\{c,\tfrac{c\alpha'}{2M}\}$, 
\begin{equation}\label{second_coordinate}
d(z,\partial K)\geq \tfrac{c'}{2}(z_{2}-\phi(z_{1})+\phi(z_{1})-\phi(y_{1}))\geq  \tfrac{c'}{2}\vert z_{2}-\phi(y_{1})\vert.
\end{equation}
Let $t$ be such that $\vert y-z\vert \leq t\max\{\vert y_{1}-z_{1}\vert,\vert z_{2}-y_{2}\vert\}$. Then, since $y_{2}=\phi(y_{1})$,
\begin{equation*}
     d(z,\partial K)\geq  \tfrac{c'}{2t}\vert y-z\vert.
\end{equation*}
This concludes the proof of the second statement.
\end{proof}

\begin{proposition}
\label{lower_bound_survival_tangent}
Suppose \ref{H:strongly_irreducible} that $S$ is strongly irreducible in $K$. Let $\sigma\in\partial K$ and $q_{\sigma}$ the exponent associated to the corresponding tangent cone $K_{\sigma}$. Then, for all $q'>q_{\sigma}$ and $\epsilon>0$ small enough, there exists $c>0$ such that for all $x$ large enough with $\frac{x}{\vert x\vert} \rightarrow \sigma$ and for all $n\leq \vert x\vert^{2-\epsilon}$,
\begin{equation*}\pr(\tau_{x}>n)>cn^{-q'/2}.\end{equation*}
\end{proposition}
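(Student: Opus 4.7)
The plan is to bound $\pr(\tau_x>n)$ from below by the survival probability of the walk inside a translated sub-cone $\hat K$ of $K$, chosen so that (i) $\hat K$ has exponent arbitrarily close to $q_\sigma$ from above, (ii) $x$ lies in $\hat K$ at positive distance from the apex, and (iii) $\hat K$ fits inside $K$ throughout the local neighbourhood of $x$ that the walk can reach in time $n$. The universal survival bound \eqref{lowerBoundProbSurv} will then deliver the $n^{-q'/2}$ lower bound.

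Fix $q'>q_\sigma$. The proof of Lemma~\ref{inclusion_smaller_cone} produces a nested family of convex sub-cones $\tilde K_\beta\subset K_\sigma$ that increases to the interior of $K_\sigma$ as $\beta\downarrow 0$; in particular, the exponent $q_\beta$ of $\tilde K_\beta$ decreases continuously to $q_\sigma$, so one may pick $\beta>0$ with $q_\beta\le q'$ and $\tilde K_\beta$ of non-empty interior. That same proof, applied at the scale $\vert x\vert$ and using the homogeneity of $K$, supplies a constant $\epsilon_0>0$ such that for every $y\in\partial K\cap B(\vert x\vert\sigma,\epsilon_0\vert x\vert)$ the translated cone $y+\tilde K_\beta$ (with apex at $y$) satisfies $(y+\tilde K_\beta)\cap B(\vert x\vert\sigma,\epsilon_0\vert x\vert)\subset K$. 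Fix a unit vector $v_0$ in the interior of $\tilde K_\beta$, let $t^*:=\min\{t\ge 0:x-tv_0\in\partial K\}$, and take $x_0\in\Lambda$ to be a lattice approximation of $x-t^*v_0$. Since $x/\vert x\vert\to\sigma\in\partial K$ one has $\dist(x,\partial K)=o(\vert x\vert)$, and since $v_0\in\mathrm{int}(K_\sigma)$, $t^*$ is comparable to $\dist(x,\partial K)$ and hence also $o(\vert x\vert)$. Therefore $x_0\in\partial K\cap B(\vert x\vert\sigma,\epsilon_0\vert x\vert/2)$ for $\vert x\vert$ large, and by construction $x=x_0+t^*v_0\in x_0+\tilde K_\beta=:\hat K$ with $\hat K\cap B(\vert x\vert\sigma,\epsilon_0\vert x\vert)\subset K$.

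Setting $\hat\tau_x:=\inf\{n\ge 1:x+S(n)\notin \hat K\}$, the preceding inclusion yields
\begin{equation*}
\pr(\tau_x>n)\ge\pr(\hat\tau_x>n)-\pr\bigl(\max_{k\le n}\vert S(k)\vert>\epsilon_0\vert x\vert/4\bigr).
\end{equation*}
For $n\le\vert x\vert^{2-\epsilon}$, the moment assumption together with a standard Fuk--Nagaev/maximal inequality bounds the escape term by $O(\vert x\vert^{-M})$ for any prescribed $M$, negligible before $n^{-q'/2}$. For the main term, $x+S$ stays in $\hat K$ if and only if $(x-x_0)+S$ stays in the cone $\tilde K_\beta$, so applying \eqref{lowerBoundProbSurv} inside $\tilde K_\beta$ gives $\pr(\hat\tau_x>n)\ge c\,n^{-q_\beta/2}\ge c\,n^{-q'/2}$ with $c$ independent of $x$, which proves the proposition.

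The principal delicacy is to ensure that \eqref{lowerBoundProbSurv} is applicable inside $\tilde K_\beta$, i.e.\ that the strong irreducibility \ref{H:strongly_irreducible} descends from $K$ to the sub-cone $\tilde K_\beta$. Because $\tilde K_\beta$ has non-empty interior and $S$ is strongly aperiodic by \ref{H:RW_aperiodic}, the required short reflection paths near $\partial\tilde K_\beta$ should be constructible by a geometric argument mirroring the one available for $K$ itself; verifying this carefully, together with the continuity $q_\beta\to q_\sigma$ as $\beta\downarrow 0$, is the technical heart of the argument.
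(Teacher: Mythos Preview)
Your overall strategy—compare with a translated sub-cone of the tangent cone, use Lemma~\ref{inclusion_smaller_cone} to fit it locally inside $K$, and subtract an escape probability—is exactly the paper's. But two steps are not justified as written.

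\textbf{The Fuk--Nagaev step.} The claim that the escape term is $O(\vert x\vert^{-M})$ for \emph{any} $M$ is false: Fuk--Nagaev produces a term of the form $n\,\pr(\vert X\vert>c\vert x\vert)=O(\vert x\vert^{2-\epsilon-r(p)})$, whose exponent is capped by the available moments. What you actually need is only that this is $o(n^{-q'/2})$ uniformly in $n\le\vert x\vert^{2-\epsilon}$, and that does follow from \ref{H:moments} after a short computation, but it is not ``any $M$''. The paper handles this more carefully by using the \emph{conditional} bound of Lemma~\ref{Lemma_24_revisited} inside the sub-cone, which directly gives $\pr(\sup_{l\le n}\vert S(l)\vert\ge n^{1/2+\epsilon'},\ \tau_{x,K'}>n)\le Cn^{-s}$ with $s>q'/2$.

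\textbf{The survival bound in the sub-cone.} This is the real gap. You invoke \eqref{lowerBoundProbSurv} inside $\tilde K_\beta$, but that bound is stated under the hypothesis that $S$ is strongly irreducible in the cone in question. Hypothesis \ref{H:strongly_irreducible} is assumed only for $K$; it is an \emph{assumption}, not a theorem, and there is no ``geometric argument mirroring the one available for $K$ itself'' to mirror. In general there is no reason strong irreducibility should pass to an arbitrary convex sub-cone of $K_\sigma$. The paper bypasses this entirely: it fixes a single direction $v$ in the interior of $K_{\sigma,\alpha}$, picks one $t>0$ with $V_{K_{\sigma,\alpha}}(tv)>0$ (existence from \cite[Thm~1]{DeWa-15}), and deduces $\pr(\tau_{tv,K_{\sigma,\alpha}}>n)\ge c\,n^{-q'/2}$ for that specific starting point. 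For $x$ with $\vert x-x_{\sslash}\vert\ge t$ one then uses the translation monotonicity $x-tv+K_{\sigma,\alpha}\subset x_{\sslash}-\sigma+K_{\sigma,\alpha}$. The remaining case $\vert x-x_{\sslash}\vert<t$ (your $t^*$ small, i.e.\ $x$ very close to $\partial K$) is treated separately: here a short Doob/Rosenthal argument gives $\pr(\tau_x\le n)\to 0$ when $n\le \vert x-x_{\sslash}\vert^{2-\epsilon}$, and strong irreducibility \emph{in $K$} is used only to push the walk, in a bounded number of steps, to a point with $\vert x-x_{\sslash}\vert\ge t$.

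In short, your sketch would go through if you replace the appeal to \eqref{lowerBoundProbSurv} by the fixed-starting-point argument above and add the case split on $\dist(x,\partial K)$; as written, the last paragraph defers precisely the step that carries the proof.
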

\begin{proof}
Let $q'>q_{\sigma}$ be small enough, and let $\alpha>0$ be such that $q_{K_{\sigma,\alpha}}=q'$. Such $\alpha$ exists, since $K_{\sigma,\alpha}\cap {\bf S}^{d-1}$ converges in Hausdorff distance to $K_{\sigma}$ as $\alpha$ goes to zero. Similarly to the proof of Lemma \ref{inclusion_smaller_cone}, assume without loss of generality that $K\subset {\bf R}^{d-1}\times {\bf R}^{+}$, $\sigma=(1,0,\ldots,0)$ and $v=(0,\ldots,0,1)$ is a vector pointing towards the interior of $K$. For $x\in K$, let $x_{\sslash}$ be the projection of $x$ on $\partial K$ along $(0,\ldots ,0,1)$. As $x$ goes to infinity while $x/\vert x\vert\rightarrow \sigma$, $x_{\sslash}/\vert x\vert$ converges to $\sigma$ and $\vert x/\vert x\vert-x_{\sslash}\vert\rightarrow 0$. 
 
By Lemma \ref{inclusion_smaller_cone}, there exist $\eta,\alpha'>0$ such that for all $z\in \partial V_{\eta}(\sigma)$ and all $u\in (z-\sigma)+K_{\sigma,\alpha}\cap B(z,\eta)$, $\dist(u,\partial K)\geq \alpha'\vert u-z\vert$. For $\alpha$ small enough and $\vert x\vert$ large enough, $x/\vert x\vert\in (x_{\sslash}/\vert x\vert-\sigma)+K_{\sigma,\alpha}$, with $x_{\sslash}/\vert x\vert\in \partial V_{\eta}(\sigma)$, which yields then that
\begin{equation}\label{linear_distance_boundary}
\dist(x,\partial K)\geq \alpha'\vert x-x_{\sslash}\vert.
\end{equation}
For $\alpha$ small enough so that $v+\sigma$ points towards the interior of $K_{\sigma,\alpha}$, let $t>0$ be  such that the harmonic function $V_{K_{\sigma,\alpha}}(\sigma+tv)$ is positive. The existence of $t$ is guaranteed by \cite[Thm~1]{DeWa-15}, which gives also $c>0$ such that $\pr(\tau_{tv,K_{\sigma,\alpha}-\sigma}>n)\geq cn^{-q'/2}$ for all $n\geq 1$. Hence, for $x$ such that $\vert x-x_{\sslash}\vert>t$, $x-tv\in x_{\sslash}-\sigma+K_{\sigma,\alpha}$ and 
\begin{equation}\label{lower_bound_smaller_cone}
\pr(\tau_{x,x_{\sslash}-\sigma+K_{\sigma,\alpha}}>n)\geq \pr(\tau_{x,x-tv+K_{\sigma,\alpha}}>n)\geq cn^{-q'/2}.
\end{equation}
Let us assume from now on that $\vert x-x_{\sslash}\vert \geq t$. Suppose first that $n\geq \vert x-x_{\sslash}\vert^{2-\epsilon}$. Thanks to the moments assumption \ref{H:moments}, we can apply the first part of Lemma \ref{Lemma_24_revisited} to the random walk in $ K':=x_{\sslash}-\sigma+K_{\sigma,\alpha}$ with $r=0$, $\kappa>q'+2$ small enough and $\epsilon'$ small enough  to get
\begin{equation*}\pr(\sup_{1\leq l\leq n}\vert S(l)\vert\geq n^{1/2+\epsilon'}, \tau_{x,K'}\geq n)\leq Cn^{-s},\end{equation*}
with $s>q'/2$, for $n\geq \vert x-x_{\sslash}\vert^{2-\epsilon}$. Hence, the latter inequality together with \eqref{lower_bound_smaller_cone} yields
\begin{equation*}\pr(\sup_{1\leq l\leq n}\vert S(l)\vert\leq n^{1/2+\epsilon'}, \tau_{x,K'}\geq n)\geq cn^{-q'/2},\end{equation*}
for $n\geq \vert x-x_{\sslash}\vert^{2-\epsilon}$ and some $c>0$. Since $n\leq \vert x\vert^{2-\epsilon}$, choosing $\epsilon'$ small enough implies that
\begin{equation*}\pr(\sup_{1\leq l\leq n}\vert S(l)\vert\leq \vert x\vert^{1-\epsilon''}, \tau_{x,K'}\geq n)\geq cn^{-q'}.\end{equation*}
Since, by Lemma \ref{inclusion_smaller_cone}, $(x_{\sslash}-\sigma)+K_{\sigma,\alpha}\cap B(x_{\sslash},\eta \vert x\vert)\subset K$, the latter inequality implies that 
\begin{equation*}\pr(\tau_{x,K}\geq n)\geq \pr(\sup_{1\leq l\leq n}\vert S(l)\vert\leq \vert x\vert^{1-\epsilon''}, \tau_{x,K'}\geq n)\geq c'n^{-q'}.\end{equation*}
for all $n\geq \vert x-x_{\sslash}\vert^{2-\epsilon}$ and $x$ large enough. 

Suppose now that $n\leq \vert x-x_{\sslash}\vert^{2-\epsilon}$. Applying Doob and Rosenthal inequalities together with \eqref{linear_distance_boundary} gives
\begin{align*}
\pr(\tau_{x}\leq n)&\leq \pr(\sup_{1\leq k\leq n} \vert S_{k}\vert\geq \dist(x,\partial K))\\
&\leq \pr(\sup_{1\leq k\leq n} \vert S_{k}\vert\geq \alpha'\vert x-x_{\sslash}\vert)\\
&\leq \frac{2n\e[\vert X\vert^{2}]}{\alpha'^{2}\vert x-x_{\sslash}\vert^{2}}\\
&\leq  Cn^{-\epsilon/(2-\epsilon)}.
\end{align*}
Hence, there exist $c,N>0$ such that $\pr(\tau_{x}> n)\geq c$ for $n>N$ with $n\leq \vert x-x_{\sslash}\vert^{2-\epsilon}$.

Suppose finally that $\vert x-x_{\sslash}\vert\leq t$. By the proof of \cite[Lem.~14]{DeWa-15} and the strong irreducibility of $S$ in $K$, there exist $c,\rho,n_{0}>0$ such that for $x$ large enough, we have
\begin{equation*}
     \pr(\vert x+S(n_{0})-(x+S(n_{0}))_{\sslash}\vert\geq t, \vert S(n_{0})\vert\leq n_{0}R)\geq \rho.
\end{equation*}
Hence, for $n\geq n_{0}$, by the Markov property and the first part of the proof,
\begin{align*}
     \pr(\tau_{x}>n)&\geq \e[\tau_{x+S(n_{0})}\geq n-n_{0};\vert x+S(n_{0})-(x+S(n_{0}))_{\sslash}\vert\geq t, \vert S(n_{0})\vert\leq n_{0}R]\\
&\geq  c\rho (n-n_{0})^{-q'}.
\end{align*}
This gives the result for $n$ large enough.
\end{proof}
We also give an asymptotic lower bound of the réduite $u$ along the boundary, which are sharper than \eqref{lowerBoundu}.

\begin{lemma}
\label{lower_bound_u_tangent}
Let $\sigma\in \partial \Sigma$ and $q''>q'>q_{\sigma}$. Then there exists $c>0$ such that uniformly on $x$ going to infinity while $x/\vert x\vert\rightarrow \sigma$ and $\dist(x,\partial K)=o(\vert x\vert)$,
\begin{equation*}u(x)\geq c\vert x\vert^{p-q''}\dist(x,\partial K)^{q'}.\end{equation*}
\end{lemma}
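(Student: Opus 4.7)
My plan is to compare $u$ with a translated réduite of an inscribed sub-cone of $K$ via the maximum principle, exploiting Lemma~\ref{inclusion_smaller_cone}. Given $q''>q'>q_\sigma$, I first choose $\alpha>0$ small enough that the exponent $q_\alpha$ of $K_{\sigma,\alpha}$ satisfies $q_\alpha<q'$; this is possible since $q_\alpha\to q_\sigma$ as $\alpha\to 0$, as already used in the proof of Proposition~\ref{lower_bound_survival_tangent}. Let $w_\alpha$ denote the réduite of $K_{\sigma,\alpha}$: harmonic and positive on $K_{\sigma,\alpha}$, vanishing on its boundary, and homogeneous from the apex $\sigma$ in the sense that $\xi\mapsto w_\alpha(\sigma+\xi)$ is positively $q_\alpha$-homogeneous in $\xi$.

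Next I rescale by $\vert x\vert$. Set $\hat x:=x/\vert x\vert$ and $\hat y:=x_{\sslash}/\vert x\vert$, where $x_{\sslash}$ is the projection of $x$ on $\partial K$ along the fixed direction $v$ used in the proof of Proposition~\ref{lower_bound_survival_tangent}. Since $K$ is a cone, $\hat y\in\partial K$, and both $\hat x,\hat y\to\sigma$ as $\vert x\vert\to\infty$. By the argument in the proof of Proposition~\ref{lower_bound_survival_tangent}, $\hat x\in(\hat y-\sigma)+K_{\sigma,\alpha}$ for $\vert x\vert$ large; Lemma~\ref{inclusion_smaller_cone} then provides $\epsilon>0$ such that the domain $\Omega:=((\hat y-\sigma)+K_{\sigma,\alpha})\cap B(\sigma,\epsilon)$ satisfies $\Omega\subset K$ and $\dist(z,\partial K)\geq\alpha'\vert z-\hat y\vert$ for every $z\in\Omega$.

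The core maximum principle step sets $W(z):=w_\alpha(z-\hat y+\sigma)$, which is harmonic on $\Omega$ and vanishes on the cone-boundary part of $\partial\Omega$ (where $u\geq 0=W$ is trivial). On the spherical part $\partial B(\sigma,\epsilon)\cap\overline\Omega$, the distance bound gives $\dist(z,\partial K)\geq c\epsilon$ for $\vert x\vert$ large, so by \eqref{lowerBoundu} one has $u(z)\geq c_1\epsilon^p>0$, while $W(z)$ is bounded above by continuity of $w_\alpha$ on a compact set; hence $u\geq c_* W$ on $\partial\Omega$ for some $c_*=c_*(\alpha,\epsilon)>0$, and the maximum principle extends this to all of $\Omega$. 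Evaluating at $\hat x$ and using homogeneity together with $(\hat x-\hat y)/\vert\hat x-\hat y\vert=v$ (an equality, since the projection is along $v$),
\begin{equation*}
W(\hat x)=w_\alpha\bigl(\sigma+(\hat x-\hat y)\bigr)=\vert\hat x-\hat y\vert^{q_\alpha}\,w_\alpha(\sigma+v),
\end{equation*}
with $w_\alpha(\sigma+v)>0$ since $v$ lies in the interior of $K_{\sigma,\alpha}$ for $\alpha$ small. Combining with $\vert\hat x-\hat y\vert\asymp\dist(x,\partial K)/\vert x\vert$ from \eqref{linear_distance_boundary} and the homogeneity $u(x)=\vert x\vert^p u(\hat x)$ yields
\begin{equation*}
u(x)\geq c\,\vert x\vert^{p-q_\alpha}\dist(x,\partial K)^{q_\alpha}.
\end{equation*}

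Finally, since $\dist(x,\partial K)\leq\vert x\vert$ and $q_\alpha<q'<q''$,
\begin{equation*}
\frac{\vert x\vert^{p-q_\alpha}\dist(x,\partial K)^{q_\alpha}}{\vert x\vert^{p-q''}\dist(x,\partial K)^{q'}}=\left(\frac{\vert x\vert}{\dist(x,\partial K)}\right)^{q'-q_\alpha}\vert x\vert^{q''-q'}\geq\vert x\vert^{q''-q'}\longrightarrow\infty
\end{equation*}
as $\vert x\vert\to\infty$, so the desired inequality $u(x)\geq c\vert x\vert^{p-q''}\dist(x,\partial K)^{q'}$ follows for $\vert x\vert$ sufficiently large. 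The most delicate step will be the lower bound for $u$ on the spherical part of $\partial\Omega$: it is the quantitative distance estimate from Lemma~\ref{inclusion_smaller_cone} that ensures $u$ remains bounded below there uniformly in $\vert x\vert$, and thus keeps the maximum principle estimate nontrivial despite $\hat x$ approaching $\partial K$ in the rescaled picture.
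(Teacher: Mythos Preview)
Your argument is correct and takes a genuinely different route from the paper's. The paper proceeds probabilistically: it bounds the Brownian survival probability $\pr(\tau_x^{\bm}>t)$ from below by the survival probability in a translated copy of $K_{\sigma,\alpha}$, uses a reflection-principle argument to remove the confinement constraint $\sup_{0\le u\le t}\vert B(u)\vert\le\epsilon\vert x-x_{\sslash}\vert$, converts to a bound on $u$ via the Gaussian estimate $u(x)/t^{p/2}\ge c\,\pr(\tau_x^{\bm}>t)$, and finally tunes $t=\vert x\vert^{2(p-q'')/(p-q')}$ to obtain the two-parameter inequality.

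Your approach is purely potential-theoretic: after rescaling to the unit scale you inscribe a bounded domain $\Omega\subset K$ via Lemma~\ref{inclusion_smaller_cone}, and compare $u$ directly with the translated r\'eduite $W$ of $K_{\sigma,\alpha}$ by the maximum principle, the key point being that the quantitative distance estimate of that lemma keeps $u$ uniformly bounded below on the spherical part of $\partial\Omega$. This avoids the Brownian machinery and the reflection argument altogether, and in fact yields the sharper one-parameter bound $u(x)\ge c\,\vert x\vert^{p-q_\alpha}\dist(x,\partial K)^{q_\alpha}$ for any $q_\alpha>q_\sigma$, from which the stated two-parameter form follows trivially. The paper's route, on the other hand, stays within the probabilistic framework used throughout and ties $u$ explicitly to survival probabilities, which is thematically consistent with the rest of Section~\ref{sec:survival}.
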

\begin{proof}
We use the same notations as in the previous proof and take $\alpha>0$ such that $q_{K_{\sigma,\alpha}}=q'$. Let $x$ going to infinity with $x/\vert x\vert\rightarrow \sigma$. By Lemma \ref{inclusion_smaller_cone}, there exists $\epsilon>0$ such that for $t>0$ and $x$ large enough,
\begin{equation*}
     \pr(\tau_{x}^{\bm}\geq t)\geq \pr(\tau>t, \sup_{0\leq u\leq t}\vert B(u)\vert\leq \epsilon \vert x-x_{\sslash}\vert),
\end{equation*}
where we have set $\tau:=\tau_{x,x_{\sslash}-\sigma+K_{\sigma,\alpha}}^{\bm}$. 

Let us show that $\pr(\tau>t, \sup_{0\leq u\leq t}\vert B(u)\vert> \epsilon \vert x-x_{\sslash}\vert)$ is negligible in comparison with $\pr(\tau>t)$, by adapting the reflection principle to a Brownian motion in a cone. By conditioning on the last time $\theta$ when $B$ reaches the sphere of radius $\epsilon\vert x -x_{\sslash}\vert$, we get
\begin{equation*}
     \pr(\tau>t, \sup_{0\leq u\leq t}\vert B(u)\vert> \epsilon \vert x-x_{\sslash}\vert, \vert B(t)\vert \leq \epsilon \vert x-x_{\sslash}\vert)\leq \pr(\theta<t<\tau, \langle B_{t}-B_{\theta},B_{\theta}\rangle<0).
\end{equation*}
We denote by $B^{(\theta)}$ the process $B_{\theta+u}-B_{\theta}$, which is a Brownian meander independent of $(B(u))_{0\leq u\leq \theta}$, see for example \cite{RY}.
Denote by $K^{+}$ (resp.\ $K^{-}$) the intersection of $K$ with the set
\begin{equation*}
     \lbrace v\in {\bf R}^{d}:\langle v,B_{\theta}\rangle >\langle B_{\theta},B_{\theta}\rangle\rbrace\quad (\text{resp.} \quad
     \lbrace v\in {\bf R}^{d}:\langle v,B_{\theta}\rangle <\langle B_{\theta},B_{\theta}\rangle\rbrace),
\end{equation*}
and let $s$ denote the symmetry with respect to the hyperplane $B_{\theta}+B_{\theta}^{\perp}$. Since we have $s(K^{-})\subset K^{+}$, 
\begin{equation*}
     (B(\theta)+B^{\theta}_{u})_{0\leq u\leq t-\theta}\subset K^{-}\quad \text{implies that}\quad s\left((B(\theta)+B^{\theta}_{u})_{0\leq u\leq t-\theta}\right)\subset K^{+}.
\end{equation*}
Moreover, $s$ turns a negative meander into a positive one, and is thus measure preserving. Therefore,
\begin{equation*}
     \pr(s(\lbrace (B(\theta)+B^{\theta}_{u})_{0\leq u\leq t-\theta}\subset K^{-}\rbrace))=\pr(\lbrace (B(\theta)+B^{\theta}_{u})_{0\leq u\leq t-\theta}\subset K^{-}\rbrace).
\end{equation*}
This implies that 
\begin{equation*}
     \pr(\theta<t<\tau, \langle B_{t}-B_{\theta},B_{\theta}\rangle<0)\leq \pr(\theta<t<\tau, \langle B_{t}-B_{\theta},B_{\theta}\rangle>0).
\end{equation*}
Since $\langle B_{t}-B_{\theta},B_{\theta}\rangle>0$ implies that $\vert B_{t}\vert>\vert B_{\theta}\vert$, we get finally
\begin{multline*}
\pr\left(\tau>t, \sup_{0\leq u\leq t}\vert B(u)\vert> \epsilon \vert x-x_{\sslash}\vert, \vert B(t)\vert \leq \epsilon \vert x-x_{\sslash}\vert\right)\\
\leq \pr\left(\tau>t, \sup_{0\leq u\leq t}\vert B(u)\vert> \epsilon \vert x-x_{\sslash}\vert, \vert B(t)\vert > \epsilon \vert x-x_{\sslash}\vert\right).
\end{multline*}
Since 
\begin{equation*}
     \pr(\tau>t, \sup_{0\leq u\leq t}\vert B(u)\vert> \epsilon \vert x-x_{\sslash}\vert, \vert B(t)\vert > \epsilon \vert x-x_{\sslash}\vert)= \pr(\tau>t, \vert B(t)\vert > \epsilon \vert x-x_{\sslash}\vert),
\end{equation*}
we have
\begin{align*}
\pr(\tau>t, \sup_{0\leq u\leq t}\vert B(u)\vert> \epsilon \vert x-x_{\sslash}\vert)&=\pr(\tau>t, \sup_{0\leq u\leq t}\vert B(u)\vert> \epsilon \vert x-x_{\sslash}\vert, \vert B(t)\vert \leq \epsilon \vert x-x_{\sslash}\vert)\\
&+\pr(\tau>t, \sup_{0\leq u\leq t}\vert B(u)\vert> \epsilon \vert x-x_{\sslash}\vert, \vert B(t)\vert > \epsilon \vert x-x_{\sslash}\vert)\\
&\leq 2\pr(\tau>t, \sup_{0\leq u\leq t}\vert B(u)\vert> \epsilon \vert x-x_{\sslash}\vert, \vert B(t)\vert > \epsilon \vert x-x_{\sslash}\vert)\\
&\leq 2\pr(\tau>t,\vert B(t)\vert > \epsilon \vert x-x_{\sslash}\vert).
\end{align*}
Therefore, using \cite[Lem.~18]{DeWa-15} yields for $t=o(\vert x-x_{\sslash}\vert^{2})$,
\begin{equation*}
     \pr(\tau>t, \sup_{0\leq u\leq t}\vert B(u)\vert> \epsilon \vert x-x_{\sslash}\vert)=o(\pr(\tau>t)),
\end{equation*}
and finally
\begin{equation*}
     \pr(\tau>t, \sup_{0\leq u\leq t}\vert B(u)\vert\leq \epsilon \vert x-x_{\sslash}\vert)\sim \frac{u_{K_{\sigma,\alpha}}(x-x_{\sslash})}{t^{q'/2}},
\end{equation*}
uniformly for $t$ and $x$ such that $x-x_{\sslash}=o(\sqrt{t})$. Since by Lemma \ref{inclusion_smaller_cone},
\begin{equation*}
     \pr(\tau>t, \sup_{0\leq u\leq t}\vert B(u)\vert\leq \epsilon \vert x-x_{\sslash}\vert)\leq \pr(\tau^{\bm}_{x}>t),
\end{equation*}
and $u_{K_{\sigma,\alpha}}(x-x_{\sslash})\geq c \dist(x-x_{\sslash},\partial K_{\sigma,\alpha})^{q'}$ by \eqref{lowerBoundu},
we have 
\begin{equation*}
     \pr(\tau^{\bm}_{x}>t)\geq \frac{c \dist(x-x_{\sslash},\partial K_{\sigma,\alpha})^{q'}}{t^{q'/2}}.
\end{equation*}
Usual Gaussian estimates in $K$ (see for example \cite[App.~A]{RaTa-18}) yields therefore
\begin{equation*}
     \frac{u(x)}{t^{p/2}}\geq c\pr(\tau_{x}^{\bm}>t)\geq \frac{c\dist(x-x_{\sslash},\partial K_{\sigma,\alpha})^{q'}}{t^{q'/2}}
\end{equation*}
for $x$ going to infinity with $x/\vert x\vert\rightarrow \sigma$, $\dist(x-x_{\sslash},\partial K_{\sigma,\alpha})=o(\vert x \vert)$ and $x-x_{\sslash}=o(\sqrt{t})$. Hence, evaluating the above inequality at $t=\vert x\vert ^{2(p-q'')/(p-q')}$ for $q''>q'$ small enough gives for any $q'>q$ the existence of $c>0$ such that
\begin{equation*}
     u(x)\geq c \vert x\vert^{p-q''}\dist(x-x_{\sslash},\partial K_{\sigma,\alpha})^{q'}.\qedhere
\end{equation*}
\end{proof}

\end{document}